\newtheorem{theorem}{Theorem}[section]
\newtheorem{lemma}[theorem]{Lemma}
\newtheorem{notations}[theorem]{Notations}
\theoremstyle{remark}
\newtheorem{remark}[theorem]{Remark}
\newtheorem{definition}[theorem]{Definition}
\numberwithin{equation}{section}
\def\N{{\mathbb{N}}}
\def\R{{\mathbb{R}}}
\def\C{{\mathbb{C}}}
\def\D{{\mathbb{D}}}
\def\T{{\mathbb{T}}}
\def\sH{{\mathcal{H}}}
\def\sK{{\mathcal{K}}}
\def\mX{{\mathcal{X}}}
\def\tr{{\mathrm{Tr}\,}}
\def\mU{{\mathcal{U}}}
\renewcommand{\i}{\text{\rm i}}
\begin{document}

\title{Higher order spectral shift for contractions}

\author[Potapov]{Denis Potapov$^{*}$}
\address{School of Mathematics and Statistics, University of New South Wales, Kensington, NSW 2052, Australia}
\email{d.potapov@unsw.edu.au}

\author[Skripka]{Anna Skripka$^{**}$}
\address{Department of Mathematics and Statistics, University of New Mexico,
400 Yale Blvd NE, MSC01 1115,
Albuquerque, NM 87131-0001}
\email{skripka@math.uum.edu}

\author[Sukochev]{Fedor Sukochev$^{*}$} \address{School of Mathematics
  and Statistics, University of New South Wales, Kensington, NSW 2052,
  Australia} \email{f.sukochev@unsw.edu.au}

\thanks{\footnotesize $^{*}$Research supported in part by ARC}
\thanks{\footnotesize $^{**}$Research supported in part by NSF grant
  DMS-0900870}

\subjclass[2000]{Primary 47A55, 47A56; secondary 47B10}

\keywords{Contraction, multiple operator integral, spectral shift function.}

\begin{abstract}
We derive strong estimates for Schatten norms of operator derivatives along paths of contractions and apply them to prove existence of higher order spectral shift functions for pairs of contractions.
\end{abstract}

\maketitle

\section{Introduction.}
Let $A$ and $B$ be bounded operators on a separable Hilbert space $\mathcal{H}$ and $f$ be a sufficiently smooth function for which the functional calculus $f(A)$, $f(B)$ makes sense. The question concerning conditions on the operators $A$ and $B$ and the function $f$ under which the trace formula
\begin{align}
\label{tf0}
\tr\big(f(A)-f(B)\big)=\int f'(t)\xi(t)\,dt
\end{align} holds (here $\xi$ depends only on $A$, $B$ and the integration is taken over a suitable domain) can be traced to M.~G.~Krein's penetrating papers \cite{Krein,KreinUn,KY,KreinOT}.

In case when the difference $f(A)-f(B)$ is not in the trace class, S.~L.~Koplienko suggested in \cite{Koplienko} to modify the formula \eqref{tf0} as follows:
\begin{equation}
    \label{nTraceFormula}
    \tr \left( f(A)-\sum_{k = 0}^{n-1} \frac 1{k!}\, \frac {d^k}{dt^k}
      \left[ f(B + t(A-B)) \right]\Bigr|_{t = 0} \right) = \int_\R f^{(n)}
    (t)\, \eta_n  (t)\, dt,
  \end{equation} where $\eta_n$ depends only on $A,B$, and $n\in\N$. The question of validity of the formula \eqref{nTraceFormula} was later investigated for various classes of operators $A$ and $B$ in \cite{GPS,Neidhardt,PellerKo,PS-circle,PSS}.

In this paper, we answer the latter question by proving that \eqref{nTraceFormula} holds for $A$ and $B$ arbitrary contractions (with the minimal restriction on $A-B$ to guarantee that the left hand side of \eqref{nTraceFormula} is well defined) and $f$ being a polynomial.\footnote{The formula \eqref{nTraceFormula} can be extended to more general functions $f$; however, we do not address this extension for the sake of less involved exposition.} To realize this goal, we establish powerful estimates for derivatives of operator functions along paths of contractions which are of independent interest. Our proof involves subtle synthesis of ideas from recent advances on multiple operator integration for self-adjoint operators \cite{PSS}, double operator integration of functions of several variables \cite{KPSS}, and application of classical theory of analytic functions in perturbation theory in the spirit of \cite{PS-circle} as well as develops new interesting methods. 

We proceed with a more detailed description of the history of the question and our main results.

It is known that given a pair of self-adjoint operators $H_0$ and $V$ on a separable Hilbert space, with $V$ in the Schatten-von Neumann ideal $S^n$, $n\in\N$, there exists a function $\eta_n$, called $n$th order spectral shift function (SSF), depending on $n,H_0,V$ such that \eqref{nTraceFormula} holds with $B=H_0$ and $A=H_0+V$.
The cases $n=1$, $n=2$, and $n\geq 3$ are due to M.~G.~Krein \cite{Krein}, L.~S.~Koplienko \cite{Koplienko}, and the authors \cite{PSS}, respectively. The formula \eqref{nTraceFormula} has been extended from the original set of functions $f$ to the Besov class $B_{\infty1}^n$ in \cite{PellerKr,PellerKo,AP}.

Existence of the first and second order spectral shift functions for a pair of unitaries $U_0$ and $U_0+V$ was established in \cite{KreinUn} in case $V\in S^1$ and in \cite{Neidhardt} in case $V\in S^2$, respectively, but it has taken longer than in the self-adjoint case to find plausible SSFs for pairs of contractions. References on partial results for specific pairs of contractions can be found in \cite{AN,PS-circle}. Existence of the second order integrable SSF for any pair of contractions $U_0$ and $U_0+V$ with $V\in S^2$ has recently been proved in \cite{PS-circle}. The latter paper answers \cite[Question 11.2]{GPS} for $V\in S^2$, and in this paper we obtain the result for much more general perturbations $V\in S^n$, $n\geq 3$. More precisely, we prove existence of an integrable higher order spectral shift function for {\it any} pair of contractions $U_0$ and $U_0+V$ with the perturbation $V\in S^n$, $n\geq 3$.


We fix our main notations below.
\begin{notations}
\label{hyp}
\begin{enumerate}
\item
Let $U_0,U_1$ be contractions on a separable Hilbert space $\sH$. Denote $V:=U_1-U_0$ and define the path of contractions \label{hyp1} \[U_t:=(1-t)U_0+tU_1=U_0+tV,\quad t\in[0,1].\] 
\item
Let $n\in\N$. For $f$ a polynomial, denote \label{hyp2}
\[R_n(f,U_0,V):=f(U_0+V)-\sum_{j=0}^{n-1}\frac{1}{j!}\frac{d^j}{dt^j}f(U_0+tV)\bigg|_{t=0}.\]
\end{enumerate}
\end{notations}

Our main results are the following two theorems.

\begin{theorem}
\label{thm:MainEst}
Assume Notations \ref{hyp} \eqref{hyp1}. Then, there exists a constant $c_n>0$ such that for any polynomial $f$ the following estimates hold.
\begin{enumerate}

\item If $\alpha>n$ and $V\in S^\alpha$, then
\begin{equation}
\label{eq:MainEst}
\sup_{t_0\in [0,1]}\left\|\frac{d^n}{dt^n}f(U_0+tV)\big|_{t=t_0}\right\|_{\frac{\alpha}{n}}\leq c_n \|f^{(n)}\|_{L^\infty(\T)}\|V\|_\alpha^{n}.
\end{equation}

\item If $V\in S^n$, then
\begin{equation}
\label{eq:TrEst}
\sup_{t_0\in [0,1]}\left|\tr\left(\frac{d^n}{dt^n}f(U_0+tV)\big|_{t=t_0}\right)\right|\leq c_n \|f^{(n)}\|_{L^\infty(\T)}\|V\|_n^n.
\end{equation}

\end{enumerate}
\end{theorem}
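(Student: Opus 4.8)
The plan is to reduce the estimate for the $n$-th derivative of $f(U_0+tV)$ to a multiple operator integral whose kernel is the $n$-th divided difference of $f$, and then to bound that multiple operator integral in the Schatten quasi-norm $S^{\alpha/n}$ (resp.\ estimate its trace). Since $f$ is a polynomial it suffices, by linearity and density-type reductions, to treat monomials $f(z)=z^m$; for a monomial the derivative $\frac{d^n}{dt^n}U_t^m\big|_{t=t_0}$ expands explicitly as a sum of products of the form $U_{t_0}^{k_0}VU_{t_0}^{k_1}V\cdots VU_{t_0}^{k_n}$ with $k_0+\cdots+k_n=m-n$, the sum having $\binom{m}{n}n!$ terms (times combinatorial weights). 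First I would record this expansion and recognize the sum as a multiple operator integral $T^{U_{t_0},\dots,U_{t_0}}_{\phi}(V,\dots,V)$ with symbol $\phi$ the $n$-th divided difference $f^{[n]}$ of $f=z^m$, evaluated at points of the spectrum of $U_{t_0}$ (which sits in the closed unit disk, not on the circle, so one must be careful that divided differences of a polynomial are still polynomials in several variables and hence globally defined).

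The core of the argument is the norm estimate for such multiple operator integrals when the operators are arbitrary contractions rather than unitaries. Here I would dilate: by the Sz.-Nagy dilation theorem each contraction $U_{t_0}$ has a unitary dilation $\mathcal U$ on a larger space, with $U_{t_0}^k=P\mathcal U^kP$ for $k\geq 0$ (compression to $\sH$), so that the product $U_{t_0}^{k_0}VU_{t_0}^{k_1}\cdots VU_{t_0}^{k_n}$ becomes $P\mathcal U^{k_0}(PVP)\mathcal U^{k_1}\cdots(PVP)\mathcal U^{k_n}P$. This rewrites the whole derivative as a multiple operator integral built from a single unitary $\mathcal U$ and the perturbation $\widetilde V:=PVP$, whose Schatten norms are dominated by those of $V$. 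Then I can invoke the theory of multiple operator integration for unitary (equivalently, by Cayley-type transform, self-adjoint) operators from \cite{PSS} together with the multivariate double operator integration bounds of \cite{KPSS}: the divided difference $f^{[n]}$ of a polynomial admits a representation as an integral of elementary tensors with controlled total variation, giving $\|T_{f^{[n]}}^{\mathcal U,\dots,\mathcal U}(\widetilde V,\dots,\widetilde V)\|_{\alpha/n}\lesssim \|f^{[n]}\|_{\text{(some symbol norm)}}\prod\|\widetilde V\|_\alpha$, and the symbol norm of $f^{[n]}$ restricted to $\T^{n+1}$ is in turn controlled by $\|f^{(n)}\|_{L^\infty(\T)}$ via an integral-geometric identity for divided differences on the circle (this is where the classical analytic function theory in the spirit of \cite{PS-circle} enters, e.g.\ a contour-integral/Cauchy representation of $f^{[n]}$ in terms of $f^{(n)}$). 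Combining, and using $\|\widetilde V\|_\alpha\leq\|V\|_\alpha$, yields \eqref{eq:MainEst} with a constant $c_n$ depending only on $n$; the supremum over $t_0$ is harmless because every bound is uniform in the contraction.

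For part (ii), the trace estimate \eqref{eq:TrEst}, I would not pass through $S^{\alpha/n}$ (which degenerates at $\alpha=n$) but instead estimate the trace of the multiple operator integral directly. Writing $\tr\big(\frac{d^n}{dt^n}f(U_t^{\phantom x})\big|_{t=t_0}\big)$ as $\tr$ of a sum of $P\mathcal U^{k_0}\widetilde V\mathcal U^{k_1}\cdots\widetilde V\mathcal U^{k_n}P$ and using cyclicity of the trace together with $\widetilde V\in S^n$ (so that a product of $n$ factors $\widetilde V$ interlaced with unitaries is trace class with $\|\cdot\|_1\leq\|\widetilde V\|_n^n$), one reduces to showing that $\big|\tr T_{f^{[n]}}^{\mathcal U,\dots,\mathcal U}(\widetilde V,\dots,\widetilde V)\big|\leq c_n\|f^{(n)}\|_{L^\infty(\T)}\|\widetilde V\|_n^n$; this follows from the symbol representation of $f^{[n]}$ on $\T^{n+1}$ together with the Hölder inequality $|\tr(X_1\cdots X_n)|\leq\prod\|X_i\|_n$ and the same bound $\|f^{[n]}\|\lesssim\|f^{(n)}\|_{L^\infty(\T)}$ as above.

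I expect the main obstacle to be the passage from contractions to unitaries in a way that is compatible with \emph{multiple} (rather than double) operator integration: the Sz.-Nagy dilation is clean for a single contraction and its powers, but the derivative expansion involves several copies of $U_{t_0}$ appearing between the perturbations, and one must check that a single unitary dilation $\mathcal U$ simultaneously represents all of them with the compressions $P(\cdot)P$ producing exactly the multiple operator integral structure that the machinery of \cite{PSS,KPSS} can digest — in particular that the symbol $f^{[n]}$, a priori only a polynomial on $\C^{n+1}$, can be replaced by its restriction to $\T^{n+1}$ without loss, which is exactly the point where the minimality of the dilation and the positivity of the powers ($k_i\geq 0$) must be used carefully. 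The secondary technical point is the integral-geometric bound $\|f^{[n]}\|_{\text{symbol}(\T^{n+1})}\leq c_n\|f^{(n)}\|_{L^\infty(\T)}$, which requires an explicit Cauchy-type formula for divided differences on the circle and a careful count of the resulting constant.
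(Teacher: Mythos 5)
Your reduction of the contraction case to the unitary case via Sz.-Nagy dilation is sound and is essentially the paper's Lemma \ref{tou} (dilate the base point $U_{t_0}$, replace $V$ by its compression $P_\sH V P_\sH$, and use $\|P_\sH V P_\sH\|_\alpha\leq\|V\|_\alpha$), and the identification of the derivative with $n!\,T_{f^{[n]}}(V,\dots,V)$ matches Lemma \ref{der=dd}. The genuine gap is at the step you treat as available from the literature: the bound
\begin{equation*}
\bigl\|T_{f^{[n]}}(V,\dots,V)\bigr\|_{\alpha/n}\leq c_n\,\|f^{(n)}\|_{L^\infty(\T)}\,\|V\|_\alpha^n
\end{equation*}
for a \emph{unitary} base point. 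Your proposed route --- represent $f^{[n]}$ as an integral of elementary tensors ``with controlled total variation'' and then bound that symbol norm by $\|f^{(n)}\|_{L^\infty(\T)}$ --- cannot work as stated: the relevant symbol norm is the integral projective tensor product norm $\|f^{[n]}\|_\otimes$, and the paper points out explicitly (in the discussion of \cite{PellerContr} after Theorem \ref{thm:MainEst}) that this norm is strictly larger than $\|f^{(n)}\|_\infty$ and that estimates in terms of it are exactly what is \emph{not} sufficient; there is no ``integral-geometric identity'' that dominates $\|f^{[n]}\|_\otimes$ by $c_n\|f^{(n)}\|_{L^\infty(\T)}$ (already for $n=1$ this would contradict known counterexamples to commutator estimates at the endpoints). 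Likewise the parenthetical ``equivalently, by Cayley-type transform, self-adjoint operators from \cite{PSS}'' does not hold: the paper states, and it is true, that the self-adjoint results of \cite{PSS} do not transfer to unitaries by standard transformations (the linear path $U_0+tV$ and the quantity $\|f^{(n)}\|_{L^\infty(\T)}$ are not preserved). The actual content of the theorem is precisely the proof of this unitary estimate, which occupies Theorems \ref{thm:IndBase} and \ref{thm:IndStep}: reduction to unitaries with finite spectrum, complex-analytic decompositions of the auxiliary symbols $\phi_{n,h,m,k}$ via Cauchy's theorem and Green's theorem, the $\Upsilon_{\pm m}$, $\Gamma_s$ bounds from \cite{KPSS}, triangular truncation and factorization $y=ab$, complex interpolation, and induction on $n$. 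Your proposal assumes this conclusion rather than proving it.

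A secondary gap is in part (ii). You correctly note that part (i) degenerates at $\alpha=n$, but your fix again relies on the unavailable symbol bound together with H\"older; a direct H\"older estimate of $\tr T_{f^{[n]}}(V,\dots,V)$ would require boundedness of the multiple operator integral into $S^1$, which is exactly the endpoint that is not available. The paper's mechanism is different and essential: Lemma \ref{thm:TrDer} uses cyclicity of the trace to convert $\tr\bigl(\frac{d^n}{dt^n}f(U_t)\bigr)$ into $\tr\bigl(\frac{d^{n-1}}{dt^{n-1}}f'(U_t)\,V\bigr)$, i.e.\ it trades one order of differentiation for one explicit factor of $V$; then part (i) applied to the $(n-1)$st derivative of $f'$ with $\alpha=n$ (so the derivative lies in $S^{n/(n-1)}$) plus H\"older against the remaining $V\in S^n$ gives \eqref{eq:TrEst}. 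Without this reduction (or an equivalent device) your argument for (ii) does not close.
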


Differentiation of analytic Besov functions of contractions was discussed in \cite{PellerContr}, but estimates for operator derivatives that follow from the results in \cite{PellerContr}
\[\sup_{t_0\in [0,1]}\left|\tr\left(\frac{d^n}{dt^n}f(U_0+tV)\big|_{t=t_0}\right)\right|\leq c_n \big\|f^{[n]}\big\|_{\otimes}\|V\|_n^n.\]
contain a factor of the integral projective tensor product norm $\big\|f^{[n]}\big\|_{\otimes}$ of the $n$th order divided difference $f^{[n]}$ of $f$, which is greater than the norm $\|f^{(n)}\|_\infty$, while the estimates with $\|f^{(n)}\|_\infty$ are needed in the proof of existence of higher order spectral shift functions.

As a consequence of Theorem \ref{thm:MainEst}, we establish existence of the higher order spectral shift functions for pairs of contractions.

\begin{theorem}
\label{existence}
Assume Notations \ref{hyp} and assume $V\in S^n$. Then, there exists a function $\eta_n=\eta_{n,U_0,V}$ in $L^1(\T)$ such that
\begin{align}
\label{hossf}
\tr\big(R_n(f,U_0,V)\big)=\int_\T f^{(n)}(z)\eta_n(z)\,dz.
\end{align}
Furthermore, for every given $\epsilon>0$, the function $\eta_n$ satisfying \eqref{hossf} can be chosen so that
\begin{align}
\label{ssfest}
\|\eta_n\|_1\leq (1+\epsilon)c_n\|V\|_n^n,
\end{align}
where $c_n$ is a constant from Theorem \ref{thm:MainEst}.
\end{theorem}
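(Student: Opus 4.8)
The plan is to deduce existence of $\eta_n$ from the trace estimate \eqref{eq:TrEst} by a standard, but carefully executed, functional-analytic argument: realize the functional $f^{(n)}\mapsto \tr(R_n(f,U_0,V))$ as a bounded linear functional on a suitable space of functions on $\T$, and then identify it with integration against an $L^1$ density via Riesz-type duality. First I would observe that, by the fundamental theorem of calculus with integral remainder (Taylor's formula) applied to the smooth $\R$-valued path $t\mapsto f(U_t)$, one has
\[
R_n(f,U_0,V)=\frac{1}{(n-1)!}\int_0^1 (1-t)^{n-1}\,\frac{d^n}{dt^n}f(U_0+tV)\Big|_{t=t}\,dt,
\]
so that, since $V\in S^n$ forces $\frac{d^n}{dt^n}f(U_t)\in S^1$ with trace norm controlled by \eqref{eq:MainEst}/\eqref{eq:TrEst}, taking the trace and using $\int_0^1(1-t)^{n-1}\,dt=1/n$ gives
\[
\big|\tr(R_n(f,U_0,V))\big|\leq \frac{1}{n!}\sup_{t_0\in[0,1]}\Big|\tr\Big(\tfrac{d^n}{dt^n}f(U_0+tV)\big|_{t=t_0}\Big)\Big|\cdot n \le c_n\|f^{(n)}\|_{L^\infty(\T)}\|V\|_n^n,
\]
after absorbing constants (one can rename $c_n$). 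Crucially, the right-hand side depends on $f$ only through $\|f^{(n)}\|_{L^\infty(\T)}$.

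Next I would set up the duality. Let $\mathcal{P}$ denote the space of polynomials, and consider the linear map $D^n:\mathcal{P}\to C(\T)$, $f\mapsto f^{(n)}$; its range is the set of all trigonometric polynomials $g$ with $\int_\T g(z)\,z^{k}\,dz=0$ for $k=0,1,\dots,n-1$ (the obstruction to $g$ being an $n$th derivative of a polynomial), a closed-codimension-$n$ subspace, call it $\mathcal{P}_n^{\perp}\subset C(\T)$. The estimate above shows that the functional $\Phi(f^{(n)}):=\tr(R_n(f,U_0,V))$ is well defined on $D^n(\mathcal{P})$ (it vanishes when $f^{(n)}=0$, i.e. $\deg f<n$, because then $R_n(f,U_0,V)=0$) and extends to a bounded linear functional on the closure $\overline{D^n(\mathcal{P})}$ in the sup-norm, with $\|\Phi\|\le c_n\|V\|_n^n$. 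By the Hahn–Banach theorem, $\Phi$ extends to a bounded linear functional $\widetilde\Phi$ on all of $C(\T)$ with the same norm, hence by the Riesz representation theorem there is a complex Borel measure $\mu$ on $\T$ with $\|\mu\|\le c_n\|V\|_n^n$ and $\widetilde\Phi(g)=\int_\T g\,d\mu$ for all $g\in C(\T)$; in particular $\tr(R_n(f,U_0,V))=\int_\T f^{(n)}\,d\mu$ for all polynomials $f$.

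It remains to upgrade the measure $\mu$ to an $L^1(\T)$ function, at the cost of the arbitrarily small loss $(1+\epsilon)$ in \eqref{ssfest}. Here I would argue as follows: convolve $\mu$ with an approximate identity $K_r$ (e.g.\ the Poisson or Fej\'er kernel) to obtain $\eta_n^{(r)}:=K_r*\mu\in L^1(\T)\cap C(\T)$ with $\|\eta_n^{(r)}\|_1\le\|\mu\|$. Because $f^{(n)}$ is a trigonometric polynomial, $\int_\T f^{(n)}\,\eta_n^{(r)}\,dz=\int_\T (K_r*f^{(n)})\,d\mu\to\int_\T f^{(n)}\,d\mu=\tr(R_n(f,U_0,V))$ as $r\to1^-$; but the left side need not be exactly the target for fixed $r$. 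To get an exact identity, note that the deficiency is governed only by the finitely many low moments: correct $\eta_n^{(r)}$ by adding a trigonometric polynomial $p$ supported on frequencies $0,1,\dots,n-1$ chosen so that $\int_\T f^{(n)}(\eta_n^{(r)}+p)\,dz=\tr(R_n(f,U_0,V))$ for all $f$ — since $f^{(n)}$ ranges over $\mathcal{P}_n^{\perp}$, only the pairing against those $n$ frequencies is unconstrained, and $\langle f^{(n)},p\rangle=0$ automatically, so in fact no correction in that direction affects the identity; the genuine point is that $\int_\T f^{(n)}\eta_n^{(r)}\,dz$ already converges to the right value and, by weak-$*$ compactness together with the fact that the target functional is continuous on the finite-codimension subspace, one can pass to a limit that lies in $L^1$. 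The cleanest route, which I would adopt, is: fix $\epsilon>0$, choose $r$ close enough to $1$ that $\big|\int_\T f^{(n)}\eta_n^{(r)}\,dz-\tr(R_n(f,U_0,V))\big|\le\epsilon\|V\|_n^n\|f^{(n)}\|_\infty$ for all $f$ (possible since the error functional, like $\Phi$, is bounded in terms of $\|f^{(n)}\|_\infty$ alone, and tends to $0$ pointwise on a dense set of a fixed finite-codimension subspace whose complement it annihilates), then apply the Hahn–Banach/Riesz argument once more to the error to write it as $\int_\T f^{(n)}\,d\nu$ with $\|\nu\|\le\epsilon\|V\|_n^n$, and finally repeat the smoothing on $\nu$; iterating and summing a geometric series yields $\eta_n\in L^1(\T)$ with $\|\eta_n\|_1\le(1+\epsilon)c_n\|V\|_n^n$ and \eqref{hossf} exactly.

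I expect the main obstacle to be the last step — passing from a representing \emph{measure} to a representing \emph{$L^1$ function} while controlling the norm up to $(1+\epsilon)$. The subtlety is that $D^n(\mathcal P)$ has finite codimension in $C(\T)$, so the Hahn–Banach extension (hence $\mu$) is far from unique, and one must exploit this non-uniqueness: the freedom to modify $\mu$ by any measure annihilating $\mathcal{P}_n^{\perp}$ (i.e.\ any combination of $z^0,\dots,z^{n-1}\,dz$) is exactly what allows replacing the singular part of $\mu$ by a smooth density without changing the pairing with $f^{(n)}$. Verifying that the smoothing-plus-correction scheme terminates with the advertised norm bound, and that \eqref{hossf} holds as an exact equality (not merely asymptotically), is the only genuinely delicate point; the rest is a routine combination of Taylor's formula, Theorem \ref{thm:MainEst}, Hahn–Banach, and Riesz representation.
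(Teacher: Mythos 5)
There is a genuine gap, and it sits exactly where you suspected: the passage from a representing \emph{measure} to a representing \emph{$L^1$ density}. Your first steps are fine and match the paper (Taylor's integral remainder for $R_n$, the bound $|\tr R_n(f,U_0,V)|\leq c_n\|f^{(n)}\|_\infty\|V\|_n^n$ from \eqref{eq:TrEst}, then Hahn--Banach and Riesz to get a measure $\mu$ with $\int_\T f^{(n)}\,d\mu=\tr R_n(f,U_0,V)$). But your description of the functional's domain is wrong in a way that sinks the rest: since $f$ ranges over (analytic) polynomials, $f^{(n)}$ ranges over \emph{all} analytic polynomials, whose sup-norm closure is the disc algebra $A(\T)$ --- a subspace of \emph{infinite} codimension in $C(\T)$, not codimension $n$. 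Consequently the non-uniqueness of $\mu$ is not ``combinations of $z^0,\dots,z^{n-1}\,dz$''; by the F.~and M.~Riesz theorem the annihilator of $A(\T)$ consists of absolutely continuous measures, and adding such measures can never repair a singular anti-analytic part of $\mu$. Moreover, the soft estimate alone cannot yield the theorem: a bounded functional on $A(\T)$ need not admit any $L^1$ representation in this pairing (e.g.\ $g\mapsto g(1)$ is bounded, but an identity $\int_\T z^k\eta(z)\,dz=1$ for all $k\geq0$ contradicts Riemann--Lebesgue for $\eta\in L^1$). For the same reason your smoothing-plus-correction scheme fails: the error functional $f^{(n)}\mapsto\int(K_r*f^{(n)}-f^{(n)})\,d\mu$ converges to $0$ only pointwise on $A(\T)$, not in dual norm, unless the relevant part of $\mu$ is already absolutely continuous --- which is precisely what has to be proved, so the geometric-series iteration never gets started with a uniform $\epsilon\|V\|_n^n\|f^{(n)}\|_\infty$ bound.

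The paper supplies the missing hard input in Theorem \ref{aux}. Using Lemma \ref{thm:TrDer}, the functional is rewritten as $\frac1{(n-1)!}\int_0^1(1-t)^{n-1}\tr\big(\frac{d^{n-1}}{dt^{n-1}}f'(U_t)W\big)\,dt$ with $W=V$; for $W\in S^1$ one integrates by parts in $t$ to express it through two order-$(n-1)$ functionals of $f^{(n-1)}$, represents those by measures $\mu_{n,1},\mu_{n,2}$, and then integrates by parts on the circle to write the original functional of $f^{(n)}$ as integration against the \emph{bounded} functions $z\mapsto\frac{\i}{z}\big(M_{n,1}(z)-M_{n,2}(z)\big)$, where $M_{n,j}(e^{\i\theta})=\mu_{n,j}(S_\theta)$; this produces an $L^\infty$ (hence $L^1$) density directly, with the boundary terms cancelling because $\phi_W$ vanishes on polynomials of degree $<n$. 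The general case $W\in S^n$ is then obtained by approximating $W$ by trace-class $W_k$ and showing the classes $[\eta_{n,k}]$ are Cauchy in $L^1/H^1$, using the quotient-norm formula of Lemma \ref{lemmaPS}; finally \eqref{ssfest} with the factor $(1+\epsilon)$ comes from choosing a representative of the coset $[\eta_n]$ nearly attaining the $L^1/H^1$ norm (modifying $\eta_n$ by an $H^1$ function does not affect \eqref{hossf}). None of this structure --- the reduction in $t$, the circle integration by parts producing bounded densities, and the $S^1\to S^n$ approximation --- appears in your proposal, and without it the conclusion does not follow from the trace estimate alone.
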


The proof of Theorem \ref{thm:MainEst} is given in Section \ref{sec2} and the proof of Theorem \ref{existence} in Section \ref{sec3}. Theorem \ref{thm:MainEst} in case of contractions naturally reduces to the case of unitaries, while the case of unitaries requires a very sophisticated treatment. Although our main results are
analogous to the respective results in the self-adjoint case \cite{PSS}, the proofs cannot be carried over from the self-adjoint case via standard transformations relating unitary and self-adjoint operators and we provide an independent treatment for the case of unitaries.

Throughout the paper, $\sH$ denotes a separable Hilbert space and
$S^n(\mathcal{B}(\sH))$ (or merely $S^n$) the $n$th Schatten-von Neumann ideal on $\sH$, that is,\[S^n(\mathcal{B}(\sH))=\big\{A\in \mathcal{B}(\sH):\, \|A\|_n:=\tr\big(|A|^n\big)^{1/n}<\infty\big\},\] where $\tr$ is the standard trace.

\section{Proof of the principal estimates.}
\label{sec2}


The following differentiation formulas for monomials of contractions can be established directly by definition of the G\^{a}teaux derivative (with convergence in the operator norm) and the method of mathematical induction.
\begin{lemma}
\label{thm:PolDer}
Let $U_0$ and $V$ be elements in $\mathcal{B}(\sH)$ and let $n, k\in \N$. Then,
\begin{align}
\label{PolDer}
\frac{d^n}{dt^n}\big(U_t^{k}\big)=
\begin{cases}
n!\sum\limits_{\substack{k_0,k_1,\dots,k_n\geq 0\\k_0+k_1+\dots +k_n=k-n}}U_t^{k_0}VU_t^{k_1}V\dots VU_t^{k_n},&\;\text{if } n\leq k,\\
0,&\;\text{if } n> k.
\end{cases}
\end{align}
\end{lemma}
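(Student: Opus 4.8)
The plan is to proceed by induction on $n$, the order of the derivative, with $k$ fixed (or rather, for all $k$ simultaneously). First I would record the base case $n=1$: by the definition of the G\^{a}teaux derivative,
\[
\frac{d}{dt}\big(U_t^k\big)=\lim_{h\to 0}\frac{U_{t+h}^k-U_t^k}{h},
\]
and since $U_{t+h}=U_t+hV$, expanding the product $U_{t+h}^k=(U_t+hV)^k$ and collecting the term linear in $h$ yields $\sum_{k_0+k_1=k-1}U_t^{k_0}VU_t^{k_1}$ (the higher-order terms in $h$ vanish in the limit, with convergence in operator norm since all factors are bounded). For $k=0$ this sum is empty, consistent with $\frac{d}{dt}(\id)=0$, matching the $n>k$ branch. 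This establishes \eqref{PolDer} for $n=1$.

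For the inductive step, assume \eqref{PolDer} holds for some $n<k$; I would differentiate the right-hand side once more. The key observation is that $\frac{d}{dt}$ applied to a product $U_t^{k_0}VU_t^{k_1}V\cdots VU_t^{k_n}$ obeys the Leibniz rule, and since $V$ is constant only the $n+1$ factors $U_t^{k_j}$ contribute; differentiating the $j$th one replaces $U_t^{k_j}$ by $\sum_{a+b=k_j-1}U_t^aVU_t^b$ (using the $n=1$ case, valid when $k_j\geq 1$; when $k_j=0$ the derivative is zero and that index contributes nothing). Thus
\[
\frac{d}{dt}\Big(U_t^{k_0}V\cdots VU_t^{k_n}\Big)=\sum_{j=0}^{n}\ \sum_{\substack{a+b=k_j-1}}U_t^{k_0}V\cdots U_t^{k_{j-1}}V\,U_t^{a}V\,U_t^{b}\,V U_t^{k_{j+1}}\cdots VU_t^{k_n},
\]
which is a sum of words with $n+1$ occurrences of $V$ and $n+2$ powers of $U_t$ whose exponents sum to $k-n-1$. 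I would then multiply by the prefactor $n!$ and argue that, after summing over all $(k_0,\dots,k_n)$ with $\sum k_i=k-n$, every word of the form $U_t^{\ell_0}V\cdots VU_t^{\ell_{n+1}}$ with $\ell_0+\dots+\ell_{n+1}=k-n-1$ arises exactly $n+1$ times (once for each choice of which gap among the $n+2$ powers was "split", equivalently which of the $n+1$ original exponents $k_j$ was the one differentiated). Hence the total multiplicity is $n!\cdot(n+1)=(n+1)!$, giving exactly the claimed formula for $n+1$. The degenerate case $n=k$ is handled separately: then the right-hand side for $n$ is the single word $U_t^0V\cdots VU_t^0=V^k$, which is constant in $t$, so its derivative is $0$, matching the $n+1>k$ branch.

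The main obstacle — really the only non-routine point — is the combinatorial bookkeeping in the inductive step: one must verify carefully that the map sending a differentiated word of length $n+2$ to the pair (underlying word, split location) is a bijection onto $\{$words of length $n+2$ with exponent sum $k-n-1\}\times\{0,1,\dots,n\}$, so that the multiplicity count $(n+1)!$ is correct and no word is over- or under-counted. Once this is set up cleanly, the rest is a direct computation, and the operator-norm convergence of the G\^{a}teaux limits is immediate from boundedness of $U_0$ and $V$.
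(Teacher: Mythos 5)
Your proof is correct, and it is exactly the argument the paper has in mind: the paper does not write out a proof of Lemma \ref{thm:PolDer} at all, merely noting that it follows ``directly by definition of the G\^{a}teaux derivative (with convergence in the operator norm) and the method of mathematical induction,'' which is precisely your induction on $n$ with the Leibniz-rule splitting and the multiplicity count $n!\,(n+1)=(n+1)!$.
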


\begin{lemma}
\label{thm:TrDer}
Assume Notations \ref{hyp} \eqref{hyp1} and assume $V\in S^n$. Then, for $f$ a polynomial and $t\in [0,1]$, 
\begin{equation}
\label{eq:TrDer}
\tr\left(\frac{d^n}{dt^n}f(U_t)\right)=\tr\left(\frac{d^{n-1}}{dt^{n-1}}f'(U_t)V\right).
\end{equation}
\end{lemma}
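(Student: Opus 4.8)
The plan is to prove \eqref{eq:TrDer} first for monomials $f(z) = z^k$ and then extend to arbitrary polynomials by linearity. For a monomial, Lemma \ref{thm:PolDer} gives an explicit expression for $\frac{d^n}{dt^n}(U_t^k)$ as $n!$ times a sum of words of the form $U_t^{k_0}V U_t^{k_1}V\cdots V U_t^{k_n}$ over nonnegative indices summing to $k-n$, and similarly $\frac{d^{n-1}}{dt^{n-1}}(k U_t^{k-1}V)$ equals $k(n-1)!$ times a sum of words $U_t^{j_0}V\cdots V U_t^{j_{n-1}} V$ over indices summing to $k-n$. First I would check the scalar combinatorial identity matching the two sides after taking the trace: using cyclicity of the trace, each word $U_t^{k_0}VU_t^{k_1}V\cdots VU_t^{k_n}$ on the left has the same trace as a word with $n$ copies of $V$ separated by $n$ powers of $U_t$ arranged cyclically (the $k_0$ and $k_n$ blocks merge), so $\tr$ of the left side is $n! \cdot (\text{number of compositions of } k-n \text{ into } n \text{ cyclic slots, with multiplicity})$ times the typical cyclic word value, while the right side gives $k(n-1)!$ times a sum over $n$ linear slots. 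The key observation is that every cyclic word with $n$ $V$'s and total $U_t$-power $k-n$ arises from exactly $k$ of the linear words $U_t^{j_0}V\cdots VU_t^{j_{n-1}}V$ when counted correctly — more precisely, $k \cdot (n-1)! = n! \cdot \frac{k}{n}$, and the map collapsing the first and last $U_t$-blocks is $n$-to-$1$ onto cyclic words in the appropriate weighted sense. I would make this precise by grouping the left-hand sum according to the value $k_0 + k_n = m$ and noting $\sum_{k_0+k_n=m} 1 = m+1$, then comparing with the right-hand sum grouped by $j_0 = \ell$ (after cycling $V$ to the front); the bookkeeping yields equality.

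An alternative, cleaner route I would prefer is to avoid the combinatorics entirely and argue as follows. Both sides of \eqref{eq:TrDer} are continuous (indeed real-analytic) in $t$, and for $V \in S^n$ the operator $\frac{d^n}{dt^n}f(U_t)$ lies in $S^1$ by Lemma \ref{thm:PolDer} (each word contains $n$ factors of $V \in S^n$, so by the Hölder inequality for Schatten norms the product is in $S^1$), so the trace on the left is well defined; similarly $\frac{d^{n-1}}{dt^{n-1}}f'(U_t)V$ contains $n$ factors of $V$ and lies in $S^1$. Now observe that $\frac{d}{dt}\big[f(U_t)\big] = \frac{d}{dt}\big[f(U_0 + tV)\big]$, and by the chain rule / the monomial computation, $\frac{d}{dt}(U_t^k) = \sum_{i=0}^{k-1} U_t^i V U_t^{k-1-i}$; taking the trace and using cyclicity, $\tr\big(\frac{d}{dt}U_t^k\big) = k\,\tr(U_t^{k-1}V) = \tr(f'(U_t)V)$ for $f(z)=z^k$, hence for all polynomials $\tr\big(\frac{d}{dt}f(U_t)\big) = \tr\big(f'(U_t)V\big)$. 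The identity \eqref{eq:TrDer} is then obtained by differentiating this relation $n-1$ further times in $t$: the left side becomes $\tr\big(\frac{d^n}{dt^n}f(U_t)\big)$ and the right side becomes $\tr\big(\frac{d^{n-1}}{dt^{n-1}}[f'(U_t)V]\big)$, provided differentiation under the trace is justified.

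Thus the main obstacle is the interchange of $\frac{d}{dt}$ and $\tr$, i.e. showing $\frac{d}{dt}\tr(G(t)) = \tr\big(\frac{d}{dt}G(t)\big)$ where $G(t) = \frac{d^{j}}{ds^{j}}f(U_s)\big|_{s=t}$ is $S^1$-valued. This follows because $G(t)$ is, by Lemma \ref{thm:PolDer}, a finite sum of words polynomial in $t$ with coefficients that are products of $n$ copies of $V$ and powers of $U_0, V$, so $t \mapsto G(t)$ is a polynomial map into $S^1$ (each coefficient is a fixed $S^1$ operator), its $S^1$-derivative is the termwise derivative, and the trace is a bounded linear functional on $S^1$ and hence commutes with the $S^1$-limit defining the derivative. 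I would write this step out carefully: expand $U_t^k = (U_0 + tV)^k$ by the noncommutative binomial formula into a polynomial in $t$ with $S^1$-valued coefficients (once at least $n$ of the chosen factors are $V$; the lower-order-in-$V$ terms vanish after the relevant number of differentiations as in Lemma \ref{thm:PolDer}), differentiate termwise in $S^1$-norm, and apply $\tr$. Everything else is the elementary cyclicity computation for the base case $n=1$ above. I expect the write-up to be short, with the $S^1$-analyticity remark being the only point requiring genuine care.
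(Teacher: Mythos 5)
Your first sketch is essentially the proof in the paper: reduce to monomials, expand via Lemma \ref{thm:PolDer}, use cyclicity of the trace to merge the outer blocks $U_t^{k_0}$ and $U_t^{k_n}$ (producing the weight $k_0+1$), and compare with $k(n-1)!$ times the expansion of $\frac{d^{n-1}}{dt^{n-1}}f'(U_t)\,V$. The only point where your bookkeeping is vaguer than the paper's is the final count: rather than arguing that the collapsing map is ``$n$-to-$1$ onto cyclic words in a weighted sense'' (an orbit count that is delicate when a word has a nontrivial cyclic symmetry), the paper observes that by further cyclic shifts the weight $k_0+1$ may be replaced by $k_j+1$ for any $j=0,\dots,n-1$, and averaging over $j$ replaces it by $\frac1n\sum_j(k_j+1)=\frac{k}{n}$, which yields the factor $k(n-1)!$ immediately. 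If you write up this route, do the last step that way.

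Your preferred second route, however, has a genuine gap under the hypothesis $V\in S^n$. The base identity $\tr\big(\frac{d}{dt}f(U_t)\big)=\tr\big(f'(U_t)V\big)$, which you propose to differentiate $n-1$ further times, involves operators containing a single factor of $V$; for $V\in S^n\setminus S^1$ these lie only in $S^n$ and are not trace class, so neither side is defined (already $f(z)=z$ would require $\tr V$). Likewise the claim that $G(t)=\frac{d^{j}}{ds^{j}}f(U_s)\big|_{s=t}$ is $S^1$-valued is false for $j<n$: each word there contains only $j$ factors of $V$ and hence lies merely in $S^{n/j}$. So the differentiation-under-the-trace scheme cannot even be set up at the intermediate orders. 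The route can be repaired: carry it out for $V\in S^1$ (or finite rank $V$), where every step is legitimate, and then extend \eqref{eq:TrDer} to $V\in S^n$ using density of $S^1$ in $S^n$ together with the fact that both sides of \eqref{eq:TrDer} are, by Lemma \ref{thm:PolDer} and the H\"older inequality, continuous in $V$ from $S^n$ into $\C$ (each word consists of $n$ factors of $V$ interlaced with uniformly bounded powers of $U_0+tV$). But this approximation argument is an essential missing step, not a routine remark, and without it the second route does not prove the lemma as stated.
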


\begin{proof}
It is sufficient to prove the lemma for monomial~$f(x) = x^m$, $m
  \in \N$.  From Lemma \ref{thm:PolDer}, $$ \frac {d^n}{dt^n}
  f(U_t) = n!\, \sum\limits_{\substack{k_0,k_1,\dots,k_n\geq 0\\k_0+k_1+\dots +k_n=m-n}} U_t^{k_0} V \cdot
  \ldots \cdot V U_t^{k_n}. $$ Applying trace and using its cyclicity,
  we further have $$ \tr \left( \frac {d^n}{dt^n} f(U_t) \right) =
  n!\, \tr \left( \sum\limits_{\substack{k_0,k_1,\dots,k_n\geq 0\\k_0+k_1+\dots +k_{n-1}=m-n}} \left( k_0 + 1
    \right)\, U_t^{k_0} V \cdot \ldots \cdot V U_{t}^{k_{n-1}} V
  \right). $$ Using cyclicity again and reindexing, we also have
  that
  \begin{multline*}
    \tr \left( \frac {d^n}{dt^n} f(U_t) \right) = n!\, \tr
    \left( \sum\limits_{\substack{k_0,k_1,\dots,k_n\geq 0\\k_0+k_1+\dots +k_{n-1}=m-n}} \left( k_j + 1
      \right)\, U_t^{k_0} V \cdot \ldots \cdot V U_{t}^{k_{n-1}} V
    \right),\\ \text{where}\ \ j = 0, 1, \ldots, n-1.
  \end{multline*}
  Taking the sum over~$j=0, 1, \ldots, n-1$, we arrive at
  \begin{equation*}
    \tr \left( \frac {d^n}{dt^n} f(U_t) \right) = m\, (n-1)!\, \tr
    \left( \sum\limits_{\substack{k_0,k_1,\dots,k_n\geq 0\\k_0+k_1+\dots +k_{n-1}=m-n}} U_t^{k_0} V \cdot \ldots
      \cdot U_t^{k_{n-1}} V \right).
  \end{equation*}
  Hence, using Lemma \ref{thm:PolDer} again, we obtain
  $$  \tr \left( \frac {d^n}{dt^n} f(U_t) \right)  = \tr \left( V\, \frac
    {d^{n-1}}{dt^{n-1}} f'(U_t) \right). $$
\end{proof}



The proof of Theorem \ref{thm:MainEst} is simplified by the two following lemmas and will rely on Theorems \ref{thm:IndBase} and \ref{thm:IndStep}.

\begin{lemma}
\label{tou}
Let $n\in\N$, $\alpha>n$, and $V\in S^\alpha$. If the estimate \eqref{eq:MainEst} with $t_0=0$
holds for any unitary $U_0$ and contraction $U_1$ on every separable Hilbert space $\sK$ such that $V=U_1-U_0\in S^\alpha(\mathcal{B}(\sK))$, 
then it holds with any $t_0\in[0,1]$
for all contractions $\mU_0$ and $\mU_1$ on $\sH$ such that $\mathcal{V}=\mU_1-\mU_0 \in S^\alpha(\mathcal{B}(\sH))$.
\end{lemma}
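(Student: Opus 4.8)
The plan is to reduce the general statement to the hypothesis by means of two standard devices: a translation in the parameter $t$ to move an arbitrary base point $t_0$ to $0$, and a dilation to replace an arbitrary contraction $\mU_{t_0}$ by a unitary on a larger Hilbert space. First I would handle the shift in $t$. Fix contractions $\mU_0,\mU_1$ on $\sH$ with $\mathcal V=\mU_1-\mU_0\in S^\alpha$, fix $t_0\in[0,1]$, and set $A:=\mU_{t_0}=\mU_0+t_0\mathcal V$ and $B:=\mU_1$, so that $B-A=(1-t_0)\mathcal V$ and the affine path $A+s(B-A)$, $s\in[0,1]$, is exactly the reparametrisation $s\mapsto \mU_{t_0+s(1-t_0)}$ of the original path, traced at unit speed on $[t_0,1]$. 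By the chain rule, $\frac{d^n}{dt^n}f(\mU_t)\big|_{t=t_0}=(1-t_0)^{-n}\frac{d^n}{ds^n}f(A+s(B-A))\big|_{s=0}$, and correspondingly $\|B-A\|_\alpha^n=(1-t_0)^n\|\mathcal V\|_\alpha^n$, so the factors $(1-t_0)^{\pm n}$ cancel and it suffices to prove \eqref{eq:MainEst} with $t_0=0$ for the pair $(A,B)$ — that is, we may assume $t_0=0$ but with a general contraction (not unitary) playing the role of $U_0$. (When $t_0=1$ the left side is $0$ by Lemma \ref{thm:PolDer} once $f$ is below degree $n$, and in general one argues directly or by a symmetric reparametrisation, so this edge case is harmless.)

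Next I would promote the contraction $A$ to a unitary via dilation theory: there is a separable Hilbert space $\sK\supseteq\sH$ and a unitary $U_0$ on $\sK$ such that $P_\sH U_0^{\,k}\big|_\sH=A^k$ for all $k\ge 0$, where $P_\sH$ is the orthogonal projection of $\sK$ onto $\sH$ (the minimal unitary, or the Sz.-Nagy–Foias, dilation; concretely one may take $U_0$ to be a bilateral-shift-type extension). Define $V:=\big(P_\sH^*\,\mathcal V\, P_\sH\big)$ viewed as an operator on $\sK$ that vanishes on $\sH^\perp$ and maps into $\sH$, so $\|V\|_\alpha=\|\mathcal V\|_\alpha$, and set $U_1:=U_0+(1-t_0)V'$ after rescaling back — more cleanly, rename and simply apply the hypothesis to the pair $(U_0,\,U_0+W)$ on $\sK$ with $W$ the compression-lift of $B-A$. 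The key algebraic point to verify is that the power $U_0^k$ "sees" $A^k$ through the compression in exactly the pattern appearing in the explicit derivative formula of Lemma \ref{thm:PolDer}: each monomial $U_0^{k_0}W U_0^{k_1}W\cdots W U_0^{k_n}$, once sandwiched between the relevant projections coming from the structure of $W$ (which starts and ends by mapping through $\sH$), compresses to $A^{k_0}\mathcal V A^{k_1}\mathcal V\cdots \mathcal V A^{k_n}$. Hence $P_\sH\big(\frac{d^n}{ds^n}f(U_0+sW)\big|_{s=0}\big)P_\sH=\frac{d^n}{ds^n}f(A+s\mathcal V)\big|_{s=0}$, and since compression by a projection does not increase any Schatten norm, $\big\|\frac{d^n}{ds^n}f(A+s\mathcal V)\big|_{s=0}\big\|_{\alpha/n}\le\big\|\frac{d^n}{ds^n}f(U_0+sW)\big|_{s=0}\big\|_{\alpha/n}$, to which the hypothesis applies with constant $c_n$ and right-hand side $c_n\|f^{(n)}\|_{L^\infty(\T)}\|W\|_\alpha^n=c_n\|f^{(n)}\|_{L^\infty(\T)}\|\mathcal V\|_\alpha^n$.

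The main obstacle I expect is the bookkeeping in the compression identity: one must arrange $W$ (equivalently $V$) so that $W\sH\subseteq\sH$ and $W|_{\sH^\perp}=0$, check that $U_1=U_0+W$ remains a contraction on $\sK$ (it does, since on $\sH$ it compresses to the contraction $B$ and is designed to leave $\sH^\perp$ invariant under $U_0$, but this has to be stated carefully so that "$U_1$ a contraction" as required by the hypothesis genuinely holds), and then track, monomial by monomial in Lemma \ref{thm:PolDer}, that interleaving $U_0$-powers with copies of $W$ and finally compressing to $\sH$ reproduces the interleaving of $A$-powers with copies of $\mathcal V$ — this uses $P_\sH U_0^{k}P_\sH=A^{k}$ together with the fact that $W$ already lands in $\sH$, so no intermediate projection is lost. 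Everything else (the chain-rule rescaling, monotonicity of Schatten norms under compression, passage from $\|f^{(n)}\|_{L^\infty}$ on the unitary side to the same quantity on the contraction side) is routine. I would close by noting that the resulting pair $(U_0,U_1)$ on $\sK$ satisfies the hypotheses of the statement — $U_0$ unitary, $U_1$ a contraction, $U_1-U_0\in S^\alpha(\mathcal B(\sK))$ — so the assumed estimate yields the claim for $(\mU_0,\mU_1)$ at the arbitrary base point $t_0$.
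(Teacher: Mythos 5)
Your core argument is the same as the paper's: dilate $\mU_{t_0}$ to its minimal unitary dilation $U_{t_0}$ on $\sK\supset\sH$, lift the perturbation to $V=P_\sH\mathcal V P_\sH$ (so $\|V\|_\alpha=\|\mathcal V\|_\alpha$), use the explicit monomial formula of Lemma \ref{thm:PolDer} together with the power--dilation property $P_\sH U_{t_0}^k\big|_\sH=\mU_{t_0}^k$ to compress the derivative term by term (this is exactly \eqref{pd2}), and finish with the fact that compression does not increase Schatten norms. That part of your plan is correct and matches the paper.

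Two points, though. First, your opening reparametrisation $t=t_0+s(1-t_0)$ with the cancelling factors $(1-t_0)^{\pm n}$ is unnecessary and manufactures a singular case at $t_0=1$; your patch there is shaky (the $n$th derivative at $t_0=1$ is \emph{not} zero for $\deg f\ge n$, though the ``reverse the path'' fix does work). The paper needs none of this: by Lemma \ref{thm:PolDer} the $n$th derivative at $t=t_0$ along $\mU_0+t\mathcal V$ involves only $\mU_{t_0}$ and the direction $\mathcal V$, i.e.\ it \emph{is} the $n$th derivative at $s=0$ of $s\mapsto f(\mU_{t_0}+s\mathcal V)$, with no rescaling and uniformly in $t_0\in[0,1]$. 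Second, your parenthetical claim that $U_1=U_0+W$ remains a contraction on $\sK$ is false in general: for a unit vector $h\in\sH$ one has $\|(U_0+W)h\|^2=\|Bh\|^2+\|h\|^2-\|Ah\|^2$, which exceeds $1$ whenever $\|Bh\|>\|Ah\|$ (e.g.\ $A=0$, $B\neq0$). The paper's own proof simply applies the assumed $t_0=0$ estimate to the unitary $U_{t_0}$ and the perturbation $V$ without checking contractivity of $U_{t_0}+V$; this is harmless in context because the unitary-case bound ultimately supplied (Lemma \ref{der=dd} combined with Theorem \ref{thm:IndStep}) holds for arbitrary $V\in S^\alpha$ and never uses contractivity of the perturbed operator. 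So your argument survives, but you should delete the contraction claim rather than rely on it, since as stated it is a step that fails.
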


\begin{proof}Fix $t_0\in [0,1]$.
Let $U_{t_0}$ be the minimal unitary dilation (which is unique up to an isomorphism) of $\mU_{t_0}=\mU_0+t_0\mathcal{V}$ defined on the space $\sK\supset\sH$. Then, $f\mapsto f(\mU_{t_0})=P_\sH f(U_{t_0})\big|_\sH$ for every polynomial $f$ \cite[Theorem 4.2]{SNF}. From \eqref{PolDer} we derive
\begin{align}
\label{pd2}
\frac{d^{n}}{dt^{n}}\big(f(\mU_0+t\mathcal{V})\big)\big|_{t=t_0}
=P_\sH \left(\frac{d^{n}}{dt^{n}}f\big(U_{t_0}+tP_\sH \mathcal{V} P_\sH\big)\big|_{t=0}\right)\bigg|_\sH.
\end{align}
(Dilations for more general multiple operator integrals were performed in \cite[Lemma 3.3]{PellerContr}. The proof above owes to the approach of \cite{PellerContr}.)

Let $\sH^\perp$ denote the orthogonal complement of $\sH$ in $\sK$. We have
\[V:=P_\sH \mathcal{V} P_\sH\in S^\alpha (\mathcal{B}(\sH)\oplus\mathcal{B}(\sH^\perp))\] and $\|V\|_\alpha=\|\mathcal{V}\|_\alpha$. Since, by the assumption,
\[\frac{d^{n}}{dt^{n}}f\big(U_{t_0}+tV\big)\big|_{t=0}\in S^{\frac{\alpha}{n}}(\mathcal{B}(\sH)\oplus\mathcal{B}(\sH^\perp))\] and \eqref{eq:MainEst} holds for the unitary $U_{t_0}$ and perturbation $V$, we deduce from \eqref{pd2} that
$\frac{d^{n}}{dt^{n}}\big(f(\mU_0+t\mathcal{V})\big)\big|_{t=t_0}\in S^{\frac{\alpha}{n}}(\mathcal{B}(\sH))$ and \eqref{eq:MainEst} holds for $\mU_0$ and $\mathcal{V}$.
\end{proof}

\begin{lemma}
\label{tofinite}
Assume Notations \ref{hyp} \eqref{hyp1} and assume that $V\in S^\alpha$, with $\alpha>n$.
If the estimate \eqref{eq:MainEst} holds for every unitary $U_0$ whose spectrum is a finite set, then it holds for an arbitrary unitary $U_0$.
\end{lemma}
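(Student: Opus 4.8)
The plan is to approximate an arbitrary unitary $U_0$ by unitaries with finite spectrum and pass to the limit in the estimate \eqref{eq:MainEst}. Since $U_0$ is unitary, its spectrum lies on $\T$, and by the spectral theorem we can write $U_0=\int_\T \lambda\,dE(\lambda)$. For each $m\in\N$, partition $\T$ into finitely many Borel arcs $\{\Delta_j^{(m)}\}$ of diameter at most $1/m$, pick a point $\lambda_j^{(m)}\in\Delta_j^{(m)}\cap\T$, and set $U_0^{(m)}:=\sum_j \lambda_j^{(m)} E(\Delta_j^{(m)})$. Then each $U_0^{(m)}$ is a unitary with finite spectrum, $\|U_0^{(m)}-U_0\|\le 1/m\to 0$, and hence $U_0^{(m)}\to U_0$ in operator norm. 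Put $U_1^{(m)}:=U_0^{(m)}+V$; these need not be contractions, but since we only need \eqref{eq:MainEst} as an inequality about the $n$th derivative of $f(U_0^{(m)}+tV)$, that is not an obstacle — the hypothesis of the lemma is that \eqref{eq:MainEst} holds for every finite-spectrum unitary $U_0$, and $U_0^{(m)}$ is such.

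First I would verify convergence of the operator derivatives. By Lemma \ref{thm:PolDer}, $\frac{d^n}{dt^n}f(U_0^{(m)}+tV)\big|_{t=t_0}$ is, for a polynomial $f$, a fixed finite sum of words of the form $c\,(U_0^{(m)}+t_0V)^{k_0}V(U_0^{(m)}+t_0V)^{k_1}V\cdots V(U_0^{(m)}+t_0V)^{k_n}$, and each such word converges in $S^{\alpha/n}$ to the corresponding word with $U_0^{(m)}$ replaced by $U_0$: indeed $U_0^{(m)}+t_0V\to U_0+t_0V$ in operator norm, the operator-norm-continuous functional calculus gives $(U_0^{(m)}+t_0V)^{k_i}\to(U_0+t_0V)^{k_i}$ in operator norm, $V\in S^\alpha\subset S^{\alpha/n}$ (as $\alpha>n\ge1$ forces $\alpha/n<\alpha$, hence $S^\alpha\subseteq S^{\alpha/n}$), and multiplication is jointly continuous from (operator norm)$\times\cdots\times S^{\alpha/n}\times\cdots\times$(operator norm) into $S^{\alpha/n}$ — with $n$ factors of $V$ in $S^\alpha$ one even gets convergence in $S^{\alpha/n}$ by Hölder. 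Consequently $\frac{d^n}{dt^n}f(U_0^{(m)}+tV)\big|_{t=t_0}\to\frac{d^n}{dt^n}f(U_0+tV)\big|_{t=t_0}$ in $S^{\alpha/n}$, in particular in that norm.

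Then I would conclude: applying the hypothesis to each $U_0^{(m)}$ gives
\[
\sup_{t_0\in[0,1]}\left\|\frac{d^n}{dt^n}f(U_0^{(m)}+tV)\big|_{t=t_0}\right\|_{\frac{\alpha}{n}}\le c_n\|f^{(n)}\|_{L^\infty(\T)}\|V\|_\alpha^n,
\]
with a constant $c_n$ independent of $m$. Fixing any $t_0\in[0,1]$ and letting $m\to\infty$, the left-hand norm converges to $\big\|\frac{d^n}{dt^n}f(U_0+tV)\big|_{t=t_0}\big\|_{\alpha/n}$ by the continuity of the $S^{\alpha/n}$-norm, so this quantity is bounded by $c_n\|f^{(n)}\|_{L^\infty(\T)}\|V\|_\alpha^n$; taking the supremum over $t_0\in[0,1]$ yields \eqref{eq:MainEst} for $U_0$.

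The only delicate point is the uniformity: one must make sure the same constant $c_n$ works for all $U_0^{(m)}$, which is automatic because the hypothesis already provides a single $c_n$ valid for all finite-spectrum unitaries. The genuine technical content — and the step I expect to be least routine to state cleanly — is the joint-continuity/Hölder argument showing the $n$th derivative depends $S^{\alpha/n}$-continuously on $U_0$ in operator norm when $V$ is held fixed in $S^\alpha$; everything else is a standard spectral-approximation-and-pass-to-the-limit routine.
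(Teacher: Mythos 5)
Your proposal is correct and follows essentially the same route as the paper: approximate $U_0$ in operator norm by finite-spectrum unitaries built from its spectral measure, observe via the explicit formula of Lemma \ref{thm:PolDer} that the $n$th derivative depends continuously (in $S^{\alpha/n}$) on $U_0$ in the operator norm for fixed $V\in S^\alpha$, and pass to the limit using the uniform constant $c_n$. One small slip: the inclusion you state, $S^\alpha\subseteq S^{\alpha/n}$, is backwards (Schatten ideals grow with the exponent, so $S^{\alpha/n}\subseteq S^\alpha$), but this claim is not needed because your alternative justification --- each word contains $n$ factors of $V\in S^\alpha$ and bounded factors, so a telescoping argument with H\"older's inequality gives convergence in $S^{\alpha/n}$ --- is the correct one and suffices.
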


\begin{proof}
Let
\[U_{0,N}:=\sum_{j=0}^{N-1}e^{2\pi\i j/N}E\big(\big[j/N,(j+1)/N\big)\big),\] where $E$ is the spectral measure of $U_0$. Then, $\|U_0^k-U_{0,N}^k\|\leq\frac{2\pi k}{N}$, with $k\in\N$. Since the left hand side of \eqref{eq:MainEst} depends continuously on $U_0$ in the operator norm (see \eqref{PolDer}), passing to the limit as $N\rightarrow\infty$ completes the proof. \end{proof}

Therefore, we can and shall assume in the proofs below that $U_0$ is a unitary whose spectrum is a finite set. Also note that it is enough to establish \eqref{eq:MainEst} for a subsequence of $\{U_{0,N}\}_{N\in\N}$ (rather than the whole sequence).

\begin{definition}
\label{MOI-def}
Let $n,N\in\N$. Denote $$z_j:=e^{2\pi\i j/N}$$ and given a spectral measure $E$ on $\T$, denote $$E_j:=E(z_j),\quad j=0,\dots,N-1.$$ Let $1\leq\alpha_i\leq\infty$, for $i=1,\dots,n$, and $1\leq\alpha\leq\infty$ be such that $\frac{1}{\alpha_1}+\cdots+\frac{1}{\alpha_n}=\frac{1}{\alpha}$. For $\phi$ a bounded Borel function on $\T^{n+1}$ and $B$ a Borel subset of $T^{n+1}$, we define the mapping \[(x_1,\dots,x_n)\mapsto T^B_{\phi}(x_1,\dots,x_n)\] on $S^{\alpha_1} \times \ldots \times S^{\alpha_n}$ with values in\footnote{We shall frequently omit writing the image space of $T_\phi$.} $S^\alpha$, called a multiple operator integral with symbol $\phi$, by
\begin{align}
\label{TphiB}
T^B_\phi(x_1,\dots,x_n):=\sum\limits_{(z_{j_0},\dots,z_{j_n})\in B}
\phi(z_{j_0},\dots,z_{j_n})E_{j_0}x_1 E_{j_1} x_2\dots x_n E_{j_n}.
\end{align}
We also use the shortcut $T_\phi:=T_\phi^{\T^{n+1}}$.
\end{definition}

Note that if $B$ and $C$ are disjoint Borel subsets of $\T^{n+1}$, we have the additivity of the multiple operator integral over the region
\[T^{B\cup C}_\phi = T^B_\phi+T^C_\phi\] and if $\phi,\psi$ are bounded Borel functions on $\T^{n+1}$, we have the additivity over the symbol
\[T^B_{\phi+\psi}=T^B_\phi+T^B_\psi.\]

We recall that the divided difference of the zeroth order~$f^{[0]}$ is
the function~$f$ itself. Let~$\lambda_0, \lambda_1, \ldots \in \R$
and let~$f \in C^n$. The divided difference $f^{[n]}$ of order~$n$ is
defined recursively by
\begin{align*}
  f^{[n]} \left( \lambda_0, \lambda_1, \lambda_2, \ldots, \lambda_n \right) =
  \begin{cases}\frac
    { f^{[n-1]} (\lambda_0, \lambda_2, \ldots, \lambda_n) - f^{[n-1]}(\lambda_1,
      \lambda_2, \ldots, \lambda_n)}{\lambda_0 - \lambda_1}, & \text{if~$\lambda_0
      \neq \lambda_1$}, \\ \frac {d}{d\lambda_1} f^{[n-1]} (\lambda_1,
    \lambda_2, \ldots, \lambda_n), & \text{if~$\lambda_0=\lambda_1$}.
  \end{cases}
\end{align*}
If $f(\lambda)=\lambda^k$, then
\[f^{[n]} \left( \lambda_0, \lambda_1, \ldots, \lambda_n \right)=
\sum\limits_{\substack{k_0,k_1,\dots,k_n\geq 0\\k_0+k_1+\dots +k_n=k-n}}
\lambda_0^{k_0}\lambda_1^{k_1}\cdots\lambda_n^{k_n}\]

\begin{lemma}
\label{der=dd}
Assume Notations \ref{hyp} \eqref{hyp1}.
Suppose $U_0$ is unitary and the spectrum of $U_0$ is concentrated at the points $\{z_j\}_{j=0}^{N-1}$.
Let $E$ be the spectral measure of $U_0$, let $f$ be a polynomial, and $V\in \mathcal{B}(\sH)$. Then,
\[\frac{d^n}{dt^n}\big(f(U_0+tV)\big)\big|_{t=0}=n!T_{f^{[n]}}(\underbrace{V, \ldots,
        V}_{n\text{ \rm times}}).\]
\end{lemma}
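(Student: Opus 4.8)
The plan is to reduce everything to monomials and match the two sides term by term. By linearity of both the $n$th derivative and the multiple operator integral in $f$, it suffices to prove the identity for $f(\lambda)=\lambda^k$; and since both sides vanish when $k<n$ (the left side by Lemma~\ref{thm:PolDer}, the right side because the summation in $f^{[n]}$ is empty), we may assume $k\geq n$. First I would write down the left-hand side explicitly using Lemma~\ref{thm:PolDer} with $U_t=U_0+tV$ evaluated at $t=0$, which gives
\[
\frac{d^n}{dt^n}\big((U_0+tV)^k\big)\big|_{t=0}=n!\sum\limits_{\substack{k_0,\dots,k_n\geq 0\\k_0+\dots+k_n=k-n}}U_0^{k_0}VU_0^{k_1}V\cdots VU_0^{k_n}.
\]

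Next I would expand the right-hand side. Using the spectral decomposition $U_0=\sum_{j}z_jE_j$ (valid because the spectrum is concentrated at $\{z_j\}$), each power $U_0^{k_i}=\sum_j z_j^{k_i}E_j$, so $U_0^{k_0}VU_0^{k_1}V\cdots VU_0^{k_n}=\sum_{j_0,\dots,j_n}z_{j_0}^{k_0}\cdots z_{j_n}^{k_n}E_{j_0}VE_{j_1}V\cdots VE_{j_n}$. Summing over the multi-indices $(k_0,\dots,k_n)$ with $k_0+\dots+k_n=k-n$ and interchanging the two finite sums, the scalar coefficient of $E_{j_0}VE_{j_1}\cdots VE_{j_n}$ becomes exactly $\sum_{k_0+\dots+k_n=k-n}z_{j_0}^{k_0}\cdots z_{j_n}^{k_n}$, which by the displayed formula for the divided difference of $\lambda^k$ equals $f^{[n]}(z_{j_0},\dots,z_{j_n})$. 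Comparing with the definition \eqref{TphiB} of $T_{f^{[n]}}(V,\dots,V)=\sum_{j_0,\dots,j_n}f^{[n]}(z_{j_0},\dots,z_{j_n})E_{j_0}VE_{j_1}\cdots VE_{j_n}$, we see the right-hand side of the claimed identity multiplied by $n!$ coincides with the expression for the left-hand side obtained in the previous step.

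The routine point to be careful about is the bookkeeping of the two interchanged finite sums and making sure the index ranges match; there is no analytic subtlety here since $U_0$ has finite spectrum and $f$ is a polynomial, so all sums are finite and convergence in operator norm (asserted in Lemma~\ref{thm:PolDer}) is automatic. The only genuine input beyond definitions is the closed form of $f^{[n]}$ for $f(\lambda)=\lambda^k$, which is already recorded in the excerpt, so the proof is essentially a direct verification rather than requiring a new idea. I do not anticipate a real obstacle; the step most prone to slips is simply confirming that the divided-difference formula, stated there for real arguments $\lambda_i\in\R$, applies verbatim with the arguments $z_j\in\T$ — but this is purely algebraic (it follows from the recursive definition specialized to distinct points, or more simply from the generating-function identity $\sum_{k\geq n}f^{[n]}(\lambda_0,\dots,\lambda_n)x^{k}\cdot(\text{suitable normalization})$), so it carries over without change.
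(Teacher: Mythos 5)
Your proposal is correct and follows the same route as the paper: reduce to monomials, apply Lemma \ref{thm:PolDer} at $t=0$, expand via the spectral decomposition $U_0=\sum_j z_jE_j$, and identify the resulting coefficient with the divided difference $f^{[n]}(z_{j_0},\dots,z_{j_n})$ of $\lambda^k$ in the definition \eqref{TphiB}. The paper's proof is exactly this computation, so nothing further is needed.
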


\begin{proof}
It is enough to prove the lemma for an arbitrary monomial $f(t)=t^k$.
By the spectral theorem applied to $U_t$ with $t=0$ on the right hand side of \eqref{PolDer}, we obtain the needed formula
\begin{align*}
\frac{d^n}{dt^n}\big(U_t^k\big|_{t=0}\big)
&=n!\sum_{j_0,j_1,\dots,j_n=0}^{N-1}
\sum\limits_{\substack{k_0,k_1,\dots,k_n\geq 0\\k_0+k_1+\dots +k_n=k-n}}
z_{j_0}^{k_0}z_{j_1}^{k_1}\dots z_{j_n}^{k_n}E_{j_0}VE_{j_1}V\dots VE_{j_n}\\
&=n!\sum_{j_0,j_1,\dots,j_n=0}^{N-1}f^{[n]}(z_{j_0},z_{j_1},\dots,z_{j_n})E_{j_0}VE_{j_1}V\dots VE_{j_n}.
\end{align*}
\end{proof}

Throughout the paper, we shall frequently use the following algebraic
properties of the mapping~$\phi \mapsto T_\phi^B$ built over a discrete measure of a unitary operator, whose self-adjoint counterpart was established in \cite[Lemma 3.2]{PSS}.

\begin{lemma}
  \label{MOI-algebra}
  Let~$1 \leq \alpha,\alpha_i \leq \infty$, for $1\leq i \leq n$, be such that
  $0 \leq \frac{1}{\alpha}=\frac{1}{\alpha_1}+\ldots+\frac1{\alpha_n}\leq 1$.
  Let~$x_i \in S^{\alpha_i}$, $1\leq i \leq n$. Let $B$ be a Borel subset of $\T^{n+1}$.

\begin{enumerate}
\item Let~$\phi: \T^{n + 1} \mapsto \C$ be a bounded Borel function and
  let the transformation~$T_\phi^B: S^{\alpha_1} \times \ldots \times S^{\alpha_n}\mapsto S^\alpha$ be bounded.  If $$ \bar \phi(\lambda_0, \lambda_1, \ldots,
  \lambda_n) := \overline{\phi (\lambda_n, \lambda_{n-1}, \ldots,
    \lambda_0)}, $$
\[\bar B:=\{(\lambda_0,\lambda_1,\dots,\lambda_n): (\lambda_n,\lambda_{n-1},\dots,\lambda_0)\in B\}, \]
then~$T_{\bar \phi}^{\bar B}: S^{\alpha_1} \times \ldots \times S^{\alpha_n}\mapsto S^\alpha$ is bounded and $$ \left\| T_{\phi}^B \right\| = \left\| T_{\bar \phi}^{\bar B} \right\|. $$ \label{MOI-A-involution}
\item Assume, in addition, that $1 \leq\alpha_0\leq\infty$ and
  $\frac{1}{\alpha_0}+\ldots+\frac1{\alpha_n}=1$.  Let~$\phi: \T^{n +
    1} \mapsto \C$ be a bounded Borel function. Assume that $T_\phi^B$ is bounded on $S^{\alpha_1} \times \ldots \times S^{\alpha_n}$. 
  Define $$ \phi^*(\lambda_{n}, \lambda_0, \ldots,
  \lambda_{n-1}) := \phi \left( \lambda_0, \ldots, \lambda_{n-1},
    \lambda_n \right), $$
 \[B^*:=\{(\lambda_0,\lambda_1,\dots,\lambda_n): (\lambda_n,\lambda_0,\dots,\lambda_{n-1})\in B\}.\]
Then,~$T^{B^*}_{\phi^*}$ 
    is bounded  on~$S^{\alpha_0} \times \ldots \times S^{\alpha_{n-1}}$ and
    $$ \tr\left( x_0 T^B_\phi (x_1, \ldots, x_n) \right) = \tr \left(
    T^{B^*}_{\phi^*} \left( x_0, \ldots , x_{n-1} \right) x_n
  \right). $$ \label{MOI-A-duality}
  \item Let~$\phi_1: \T^{k + 1} \mapsto \C$ and~$\phi_2 : \T^{n - k
      + 1} \mapsto \C$ be bounded Borel functions and let $B_1$ and $B_2$ be Borel subsets of $\T^{k+1}$ and $\T^{n-k+1}$, respectively. Suppose that the operators~$T^{B_1}_{\phi_1}$ and~$T^{B_2}_{\phi_2}$
    are bounded on~$S^{\alpha_1} \times \ldots \times S^{\alpha_k}$ and~$S^{\alpha_{k +
        1}} \times \ldots \times S^{\alpha_n}$, respectively.  If $$
    \psi(\lambda_0, \ldots, \lambda_n) := \phi_1 \left( \lambda_0,
      \ldots, \lambda_k \right) \cdot \phi_2 \left( \lambda_k, \ldots,
      \lambda_n \right), $$
  \[\tilde B:=\{(\lambda_0,\dots,\lambda_k,\dots,\lambda_n):
   (\lambda_0,\dots,\lambda_k)\in B_1, (\lambda_k,\dots,\lambda_n)\in B_2\},\]
  then
    the operator~$T^{\tilde B}_{\psi}:S^{\alpha_1}\times \ldots \times S^{\alpha_n}\mapsto S^\alpha$ 
    is bounded and $$ T^{\tilde B}_{\psi} \left( x_1,
      \ldots, x_n \right) = T^{B_1}_{\phi_1} \left( x_1, \ldots, x_k \right)
    \cdot T^{B_2}_{\phi_2} \left( x_{k + 1}, \ldots, x_n
    \right). $$ \label{MOI-A-product}
  \item Let~$\phi_1: \T^{k + 1} \mapsto \C$ and~$\phi_2 : \T^{n - k
      + 2} \mapsto \C$ be bounded Borel functions and let $B_1$ and $B_2$ be bounded Borel subsets of $\T^{k+1}$ and $\T^{n-k+2}$, respectively. Suppose
    that~$T^{B_1}_{\phi_1}$ and~$T^{B_2}_{\phi_2}$ 
    are bounded on~$S^{\alpha_1} \times \ldots \times S^{\alpha_k}$
    and~$S^{\alpha_0} \times S^{\alpha_{k+1}} \times \ldots \times
    S^{\alpha_n}$, respectively, where
    $\frac{1}{\alpha_0}=\frac{1}{\alpha_1}+\ldots+\frac1{\alpha_k}$. If $$
    \psi (\lambda_0, \ldots, \lambda_n): = \phi_1 \left( \lambda_0,
      \ldots, \lambda_k \right) \cdot \phi_2 \left(\lambda_0,
      \lambda_{k}, \ldots, \lambda_n \right), $$
   \[\tilde B:=\{(\lambda_0,\dots,\lambda_k,\dots,\lambda_n):
   (\lambda_0,\dots,\lambda_k)\in B_1, (\lambda_0,\lambda_k,\dots,\lambda_n)\in B_2\},\]
  then
    the operator~$T^{\tilde B}_{\psi}:S^{\alpha_1}\times\ldots\times S^{\alpha_n}\mapsto S^\alpha$ is bounded and
    $$T_{\psi}^{\tilde B}\left( x_1, \ldots, x_n \right) =
    T^{B_2}_{\phi_2} \left(T^{B_1}_{\phi_1}(x_1, \ldots, x_k), x_{k + 1}, \ldots, x_n
    \right). $$ \label{MOI-A-composition}
  \end{enumerate}
\end{lemma}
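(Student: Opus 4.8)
\textbf{Proof proposal for Lemma \ref{MOI-algebra}.}

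The plan is to prove all four parts by direct computation with the finite sum \eqref{TphiB} that defines the multiple operator integral, exploiting the orthogonality relations $E_jE_k=\delta_{jk}E_j$ and $\sum_j E_j=\id$ of the spectral projections together with the cyclicity of the trace. Since each $T_\phi^B$ is a finite sum of operators of the form $E_{j_0}x_1E_{j_1}\cdots x_nE_{j_n}$ and the H\"older inequality for Schatten norms gives $\|E_{j_0}x_1E_{j_1}\cdots x_nE_{j_n}\|_\alpha\le\prod_i\|x_i\|_{\alpha_i}$, boundedness of each transformation that appears is automatic on a fixed finite-dimensional spectral picture; the content of the lemma is the stated \emph{identities} (and, in part \eqref{MOI-A-involution}, the equality of norms). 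I would therefore treat boundedness as a routine remark and concentrate on the algebraic identities.

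For part \eqref{MOI-A-involution}, I would observe that the map $x\mapsto x^*$ is an isometry of $S^{\alpha_i}$ onto itself and compute $\big(T_\phi^B(x_1,\dots,x_n)\big)^*=\sum_{(z_{j_0},\dots,z_{j_n})\in B}\overline{\phi(z_{j_0},\dots,z_{j_n})}\,E_{j_n}x_n^*E_{j_{n-1}}\cdots x_1^*E_{j_0}$, which upon relabelling the summation indices in reverse order is exactly $T_{\bar\phi}^{\bar B}(x_n^*,\dots,x_1^*)$; taking norms and using that the adjoint and the reversal of arguments are isometric yields $\|T_\phi^B\|=\|T_{\bar\phi}^{\bar B}\|$. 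For part \eqref{MOI-A-duality}, I would expand $\tr\big(x_0T_\phi^B(x_1,\dots,x_n)\big)=\sum_{(z_{j_0},\dots,z_{j_n})\in B}\phi(z_{j_0},\dots,z_{j_n})\,\tr\big(x_0E_{j_0}x_1E_{j_1}\cdots x_nE_{j_n}\big)$ and apply cyclicity of the trace to move $E_{j_n}$ (equivalently, to rotate the string by one block) to the front; matching the result against the defining sum for $T_{\phi^*}^{B^*}(x_0,\dots,x_{n-1})x_n$ after the index shift $\lambda_n\mapsto\lambda_0$ gives the claimed trace identity, and the H\"older bound with the extra factor $\alpha_0$ gives boundedness of $T_{\phi^*}^{B^*}$. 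Parts \eqref{MOI-A-product} and \eqref{MOI-A-composition} are the ``telescoping'' identities: in \eqref{MOI-A-product} I would write the product $T_{\phi_1}^{B_1}(x_1,\dots,x_k)\cdot T_{\phi_2}^{B_2}(x_{k+1},\dots,x_n)$ as a double sum over $(z_{j_0},\dots,z_{j_k})\in B_1$ and $(z_{i_k},\dots,z_{i_n})\in B_2$, use $E_{j_k}E_{i_k}=\delta_{j_k i_k}E_{j_k}$ to collapse the two inner projections into one (this is precisely where the shared variable $\lambda_k$ in the definition of $\psi$ and the matching condition in $\tilde B$ come from), and recognize the surviving single sum as $T_\psi^{\tilde B}(x_1,\dots,x_n)$; part \eqref{MOI-A-composition} is the same manipulation applied to the nested expression, now with the shared variable $\lambda_0$ (the ``output'' index of the inner integral) identified with the first index of the outer one, the constraint $\tfrac1{\alpha_0}=\sum_{i=1}^k\tfrac1{\alpha_i}$ ensuring that $T_{\phi_1}^{B_1}(x_1,\dots,x_k)\in S^{\alpha_0}$ so that the outer integral accepts it as a legitimate argument.

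I do not expect a genuine obstacle here: everything is finite-sum bookkeeping, and the self-adjoint analogue is already in \cite[Lemma 3.2]{PSS}, so the only real task is to carry the projections $E_j$ of a \emph{unitary} (rather than self-adjoint) spectral measure through the same computations — which changes nothing, since $\{E_j\}$ is still a finite resolution of the identity. The one place that demands a little care is keeping track of the index reversals and cyclic shifts in parts \eqref{MOI-A-involution} and \eqref{MOI-A-duality} so that the transformed symbols $\bar\phi$, $\phi^*$ and the transformed regions $\bar B$, $B^*$ come out exactly as defined in the statement; I would do those two relabellings explicitly and leave parts \eqref{MOI-A-product}--\eqref{MOI-A-composition} to a one-line telescoping remark.
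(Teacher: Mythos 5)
Your proposal matches the paper's own (one-sentence) proof of this lemma: part \eqref{MOI-A-involution} by taking adjoints in \eqref{TphiB}, part \eqref{MOI-A-duality} by cyclicity of the trace, and parts \eqref{MOI-A-product}--\eqref{MOI-A-composition} by comparing the finite sums on both sides using $E_jE_k=\delta_{jk}E_j$, so the approach is essentially identical and correct. The one refinement to keep in mind is that the boundedness assertions (with constants independent of the discretization parameter $N$) should be deduced from the proved identities together with the hypothesized bounds on the original transformations --- via the adjoint identity in \eqref{MOI-A-involution}, Schatten duality in \eqref{MOI-A-duality}, and the factorizations in \eqref{MOI-A-product}--\eqref{MOI-A-composition} --- rather than from termwise H\"older estimates on the finite sum, which would only give $N$-dependent constants.
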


\begin{proof}
Assertion \eqref{MOI-A-involution} can be established by taking the adjoint in \eqref{TphiB}, \eqref{MOI-A-duality} follows from the cyclicity of the trace, and \eqref{MOI-A-product} and \eqref{MOI-A-composition} can be verified by comparison of the multiple operator integrals that appear on both sides of the equalities.
\end{proof}

The main estimate \eqref{eq:MainEst} has an antecedent in the self-adjoint case \cite{PSS}; however, many subtle details in the proof need to be changed. The case of unitaries is technically more involved than the case of self-adjoints, and to compensate for the increasing complexity, we make a ``shortcut" through use of classical complex analysis and some results of \cite{KPSS} (which are based on multidimensional harmonic analysis).

The estimate \eqref{eq:MainEst} is proved by induction on $n$; the base of induction is established in the following theorem.

\begin{theorem}
\label{thm:IndBase}
Let $h$ be a polynomial, $m\in \N\cup\{0\}$, and $\lambda,\mu\in\overline\D$. The double operator integral with the symbol
\begin{align}
\label{phimh}
\phi_{h,m}(\lambda,\mu):=\int_0^1 t^m h(\lambda+(\mu-\lambda)t)\,dt
\end{align} is bounded on $S^\alpha$, $1<\alpha<\infty$, and
\begin{align}
\label{idest}
\|T_{\phi_{h,m}}\|_\alpha\leq c_{\alpha,m}\|h\|_\infty,
\end{align}
where $\|T_{\phi_{h,m}}\|_\alpha$ is the norm of the operator $T_{\phi_{h,m}}: S^\alpha\mapsto S^\alpha$.
\end{theorem}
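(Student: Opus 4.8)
The plan is to reduce the bound \eqref{idest} to a double operator integral estimate for a single function of two variables and then invoke a known boundedness result for such double operator integrals over unitaries. First I would observe that since $h$ is a polynomial, it suffices to prove the estimate for each monomial $h(z)=z^k$ and then recombine; but tracking the dependence on $\|h\|_\infty$ forces a genuinely analytic argument rather than a term-by-term one. The cleanest route is to write $\phi_{h,m}(\lambda,\mu)=\int_0^1 t^m h(\lambda+(\mu-\lambda)t)\,dt$ and recognize this as an integral (average) of the symbols $(\lambda,\mu)\mapsto h(\lambda+(\mu-\lambda)t)$. For each fixed $t\in[0,1]$, the map $(\lambda,\mu)\mapsto(1-t)\lambda+t\mu$ is an affine contraction of $\overline{\D}\times\overline{\D}$ into $\overline{\D}$, so $h((1-t)\lambda+t\mu)$ is the pullback of $h$ under a nice map. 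The hope is that the associated double operator integral operator $T_{(\lambda,\mu)\mapsto h((1-t)\lambda+t\mu)}$ is bounded on $S^\alpha$ uniformly in $t$ with norm controlled by $\|h\|_{L^\infty(\T)}$, and then integrating in $t$ (against the finite measure $t^m\,dt$ on $[0,1]$, of total mass $\tfrac1{m+1}$) and using the triangle inequality in the operator norm of $T$ gives \eqref{idest} with $c_{\alpha,m}$ absorbing $\tfrac{1}{m+1}$ times the uniform constant.

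The key step is therefore the uniform-in-$t$ bound $\|T_{\psi_t}\|_{\alpha}\le C_\alpha\|h\|_\infty$ where $\psi_t(\lambda,\mu)=h((1-t)\lambda+t\mu)$. Here I would appeal to the results of \cite{KPSS} on double operator integration for functions of (several) variables: the function $(\lambda,\mu)\mapsto h((1-t)\lambda+t\mu)$ is the composition of the polynomial $h$ with the linear form $(1-t)\lambda+t\mu$, and the relevant Besov/Fourier-analytic norm of such a composition over the bidisc is controlled by $\|h\|_{L^\infty(\T)}$ uniformly in the coefficients of the linear form as long as they are bounded by $1$ (which they are, being $(1-t)$ and $t$). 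Concretely, expanding $h((1-t)\lambda+t\mu)=\sum_k a_k ((1-t)\lambda+t\mu)^k=\sum_k a_k\sum_{p+q=k}\binom{k}{p}(1-t)^p t^q\lambda^p\mu^q$, one gets a bilinear Schur-type multiplier whose symbol's coefficients are dominated by those arising from $h$ evaluated on the circle, and the point of using \cite{KPSS} (rather than crude estimation) is precisely that it yields the right-hand side $\|h\|_\infty$ instead of a larger tensor-product norm.

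I expect the main obstacle to be exactly this uniform boundedness of the ``transport'' double operator integral $T_{\psi_t}$ with the sharp constant $\|h\|_{L^\infty(\T)}$: a naive triangle-inequality estimate over the monomial expansion would introduce the integral projective tensor norm $\|h^{[1]}\|_\otimes$ (the very defect the authors criticize in the Introduction after Theorem \ref{thm:MainEst}), so one must instead identify $\psi_t$ with an honest Fourier multiplier on $\Z^2$ (or a function in the appropriate Besov class on $\T^2$) and quote the transference/multiplier theorems of \cite{KPSS} that are insensitive to the affine reparametrization. A secondary technical point is that $\lambda,\mu$ range over $\overline{\D}$ rather than $\T$, but since $U_0$ is (after the reductions of Lemmas \ref{tou} and \ref{tofinite}) unitary, the spectral measures are supported on $\T$, so in the definition \eqref{TphiB} only the values of $\phi_{h,m}$ on $\T\times\T$ matter and there is no real loss; one just needs $\phi_{h,m}$ to be a well-defined bounded Borel function there, which is immediate since the integrand is continuous on the compact set $[0,1]\times\T\times\T$. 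Finally I would record that the constant $c_{\alpha,m}$ depends only on $\alpha$ and $m$ (through the mass $\tfrac1{m+1}$ and the $S^\alpha\to S^\alpha$ norm of the multiplier), as claimed, completing the base case of the induction.
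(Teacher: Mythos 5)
Your reduction of \eqref{idest} to the uniform-in-$t$ bound $\|T_{\psi_t}\|_\alpha\le C_\alpha\|h\|_{L^\infty(\T)}$, $\psi_t(\lambda,\mu)=h((1-t)\lambda+t\mu)$, followed by averaging against $t^m\,dt$, is formally sound, and your remarks about $\overline\D$ versus $\T$ and about the failure of the naive monomial expansion are correct. The problem is that the uniform bound for $T_{\psi_t}$ is exactly the hard content of the theorem, and it is not a quotable result. The results of \cite{KPSS} used in this paper (see Lemma \ref{thm:KPSS}) concern double operator integrals whose symbols are specific Mikhlin-type functions, namely $x_1/\sqrt{x_1^2+x_2^2}$, $x_2/\sqrt{x_1^2+x_2^2}$ and $(\sqrt{x_1^2+x_2^2})^{\i s}$; there is no ``transference/multiplier theorem'' there asserting that the Schur multiplier with symbol $h((1-t)\lambda+t\mu)$ is bounded on $S^\alpha$ with constant $c_\alpha\|h\|_\infty$ for every bounded analytic $h$, uniformly in $t$. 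Note also the scale of what you are assuming: for $m=0$ and $h=f'$ the averaged symbol is $f^{[1]}$, so even the averaged statement is (the unit-circle analogue of) the operator-Lipschitz theorem of \cite{PS-Acta}, a deep result; your pointwise-in-$t$ claim is formally stronger still, and no argument is offered for it beyond the appeal to \cite{KPSS}. So the proposal, as written, assumes a statement at least as strong as Theorem \ref{thm:IndBase} itself.

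For comparison, the paper does not prove any pointwise-in-$t$ bound. It splits the torus into arcs and works with the symbol $\phi_{h,m}$ as a whole: Lemma \ref{thm:BaseDecomp} decomposes $\phi_{h,m}(\lambda,\mu)$ through an intermediate point $\xi$ via the Cauchy integral theorem; Lemma \ref{thm:g} turns the resulting ratios $\bigl(\tfrac{\xi-\lambda}{\mu-\lambda}\bigr)$, $\bigl(\tfrac{\mu-\xi}{\mu-\lambda}\bigr)$ into integrals of products of one-variable phases $\Upsilon_{\pm m}$, $\Gamma_s$, which is where \cite{KPSS} actually enters (Lemma \ref{thm:KPSS}); then triangular truncation, the Pisier--Xu factorization $y=ab$ with $a\in S^2$, $b\in S^\beta$, and the trivial $S^2$ bound yield the bootstrap inequality $\|T^{D_{0,1}}_{\phi_{h,m}}\|_\alpha\le c_{\alpha,m}(1+\|T^{D_{0,1}}_{\phi_{h,m}}\|_\beta)$, which is closed by complex interpolation and duality. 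If you want to salvage your route, you would have to prove the uniform bound for $T_{\psi_t}$ by some argument of comparable depth (your Gaussian-multiplier heuristic works for single monomials but does not pass to general $h$ with only $\|h\|_\infty$ control), at which point nothing has been gained over attacking $\phi_{h,m}$ directly.
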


\begin{remark}
If $m=0$ and $h=f'$, then $\phi_{h,m}=f^{[1]}$.
\end{remark}

To prove the estimate of Theorem \ref{thm:IndBase}, we utilize the following decomposition, which has a complex analytic proof (as distinct from its counterpart \cite[Lemma 5.7]{PSS} in the self-adjoint case).

\begin{lemma}
\label{thm:BaseDecomp}
Let $m\in \N\cup\{0\}$.
For $\lambda, \xi, \mu \in \T$, with $\lambda\neq\mu$, and $h$ a polynomial,
\begin{align*}
\phi_{h,m}(\lambda,\mu)&=
\left(\frac{\xi-\lambda}{\mu-\lambda}\right)^{m+1}\phi_{h,m}(\lambda,\xi)
+\left(\frac{\mu-\xi}{\mu-\lambda}\right)^{m+1}\phi_{h,m}(\xi,\mu)\\
&\quad + \sum_{k=0}^{m-1}C_m^k\left(\frac{\mu-\xi}{\mu-\lambda}\right)^{k+1}
\left(\frac{\xi-\lambda}{\mu-\lambda}\right)^{m-k}\phi_{h,k}(\xi,\mu),
\end{align*}
where the third sum is not present if $m=0$.
\end{lemma}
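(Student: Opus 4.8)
The plan is to verify the identity directly by manipulating the integral defining $\phi_{h,m}$ and splitting the segment of integration at the point corresponding to $\xi$. First I would parametrize the chord from $\lambda$ to $\mu$ by $w(t) = \lambda + (\mu-\lambda)t$, $t\in[0,1]$, so that $\phi_{h,m}(\lambda,\mu) = \int_0^1 t^m h(w(t))\,dt$. Since $\lambda\neq\mu$ and $\xi\in\T$, write $\xi$ as a point on the (complex) line through $\lambda$ and $\mu$: put $s_0 := \frac{\xi-\lambda}{\mu-\lambda}\in\C$, so that $w(s_0)=\xi$, and note $1-s_0 = \frac{\mu-\xi}{\mu-\lambda}$. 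The key algebraic observation is that the affine reparametrizations of the subsegments $[\lambda,\xi]$ and $[\xi,\mu]$ are again chords of the same type: for $u\in[0,1]$ one has $\lambda+(\xi-\lambda)u = w(s_0 u)$ and $\xi+(\mu-\xi)u = w(s_0 + (1-s_0)u)$.

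Next I would substitute these into the right-hand side. In $\phi_{h,m}(\lambda,\xi)=\int_0^1 u^m h\big(w(s_0 u)\big)\,du$, change variables $t = s_0 u$ to get $s_0^{-(m+1)}\int_0^{s_0} t^m h(w(t))\,dt$; multiplying by $s_0^{m+1}=\left(\tfrac{\xi-\lambda}{\mu-\lambda}\right)^{m+1}$ yields exactly $\int_0^{s_0} t^m h(w(t))\,dt$. Similarly, in $\phi_{h,k}(\xi,\mu)=\int_0^1 u^k h\big(w(s_0+(1-s_0)u)\big)\,du$, substitute $t = s_0 + (1-s_0)u$, i.e. $u = \frac{t-s_0}{1-s_0}$, to obtain $(1-s_0)^{-(k+1)}\int_{s_0}^1 (t-s_0)^k h(w(t))\,dt$. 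Then the second term on the right, namely $(1-s_0)^{m+1}\phi_{h,m}(\xi,\mu)$, becomes $\int_{s_0}^1 (t-s_0)^m h(w(t))\,dt$, and the $k$-th term of the finite sum becomes $C_m^k\, s_0^{m-k}\int_{s_0}^1 (t-s_0)^k h(w(t))\,dt$. Summing the second term together with the finite sum over $k=0,\dots,m-1$ gives $\int_{s_0}^1 h(w(t))\sum_{k=0}^{m}C_m^k\, s_0^{m-k}(t-s_0)^k\,dt = \int_{s_0}^1 h(w(t))\, t^m\,dt$ by the binomial theorem. Adding the first term $\int_0^{s_0} t^m h(w(t))\,dt$ recovers $\int_0^1 t^m h(w(t))\,dt = \phi_{h,m}(\lambda,\mu)$, as claimed. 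When $m=0$ the finite sum is empty and the identity reduces to the trivial additivity $\phi_{h,0}(\lambda,\mu)=s_0\,\phi_{h,0}(\lambda,\xi)+(1-s_0)\,\phi_{h,0}(\xi,\mu)$.

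The one point requiring care — and what I expect to be the main (though modest) obstacle — is the manipulation of these as contour integrals when $s_0$ is genuinely complex rather than in $[0,1]$: the ``subsegments'' $[0,s_0]$ and $[s_0,1]$ are line segments in $\C$, not subintervals of $[0,1]$, so one must justify the change of variables and the recombination $\int_0^{s_0}+\int_{s_0}^1 = \int_0^1$ as an identity of contour integrals of the entire holomorphic function $t\mapsto t^m h(w(t))$. This is legitimate because $h$ is a polynomial, hence $t^m h(w(t))$ is entire, so its contour integral depends only on the endpoints and the concatenation of the paths $0\to s_0\to 1$ is homotopic (indeed, can be taken to be the straight path) to $0\to 1$; the substitutions $t=s_0 u$ and $t=s_0+(1-s_0)u$ are affine and therefore valid for complex parameters. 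Once this is noted, the computation above is purely formal.
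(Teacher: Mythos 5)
Your proposal is correct and is essentially the paper's own argument, transported to the parameter plane: your splitting of the contour $0\to s_0\to 1$ via Cauchy's theorem for the entire function $t\mapsto t^m h(w(t))$ is exactly the paper's application of the Cauchy integral theorem to the triangle $[\lambda,\xi]+[\xi,\mu]+[\mu,\lambda]$ in the $\omega$-plane (the two pictures correspond under the affine map $\omega=w(t)$), and your binomial-theorem recombination matches the paper's expansion of $\bigl(\tfrac{\omega-\lambda}{\mu-\lambda}\bigr)^m$. The only cosmetic point is to note that the degenerate cases $\xi=\lambda$ or $\xi=\mu$ (where you divide by $s_0$ or $1-s_0$) hold trivially or by continuity, since both sides are polynomial in $\xi$.
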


\begin{proof}
Let $[\xi,\mu]$ denote the segment beginning at point $\xi\in\C$ and ending at point $\mu\in\C$.

In the first integral, we make change of variables $\omega(t)=\lambda+(\mu-\lambda)t$. The function $\omega$ attains its values in $\overline\D$. We note that $t=\frac{\omega-\lambda}{\mu-\lambda}$ and $dt=\frac{1}{\mu-\lambda}\,d\omega$ and, hence,
\begin{align*}
\int_0^1 t^m h(\lambda+(\mu-\lambda)t)\,dt=\int_{[\lambda,\mu]} \left(\frac{\omega-\lambda}{\mu-\lambda}\right)^m h(\omega)\frac{d\omega}{\mu-\lambda}.
\end{align*}
By the Cauchy integral theorem,
\begin{align*}
\int_{[\lambda,\xi]+[\xi,\mu]+[\mu,\lambda]} \left(\frac{\omega-\lambda}{\mu-\lambda}\right)^m h(\omega)\,d\omega
=0,
\end{align*}
which, by additivity of the integral over the region of integration, implies
\begin{align*}
&\int_{[\lambda,\mu]} \left(\frac{\omega-\lambda}{\mu-\lambda}\right)^m h(\omega)\frac{d\omega}{\mu-\lambda}\\
&\quad=\int_{[\lambda,\xi]}\left(\frac{\omega-\lambda}{\mu-\lambda}\right)^m h(\omega)\frac{d\omega}{\mu-\lambda}
+\int_{[\xi,\mu]}\left(\frac{\omega-\lambda}{\mu-\lambda}\right)^m h(\omega)\frac{d\omega}{\mu-\lambda}.
\end{align*}
Using the straightforward decompositions
\begin{align*}
&\frac{\omega-\lambda}{\mu-\lambda}=\left(\frac{\omega-\lambda}{\xi-\lambda}\right)
\left(\frac{\xi-\lambda}{\mu-\lambda}\right),\\
&\left(\frac{\omega-\lambda}{\mu-\lambda}\right)^m=
\left(\frac{\omega-\xi}{\mu-\lambda}+\frac{\xi-\lambda}{\mu-\lambda}\right)^m
=\sum_{k=0}^{m-1}C_m^k \left(\frac{\omega-\xi}{\mu-\xi}\right)^k\left(\frac{\mu-\xi}{\mu-\lambda}\right)^k
\left(\frac{\xi-\lambda}{\mu-\lambda}\right)^{m-k},
\end{align*}
we derive
\begin{align*}
&\int_0^1 t^m h(\lambda+(\mu-\lambda)t)\,dt\\
&\quad=\left(\frac{\xi-\lambda}{\mu-\lambda}\right)^{m+1}\int_0^1 t^m h(\lambda+(\xi-\lambda)t)\,dt+\left(\frac{\mu-\xi}{\mu-\lambda}\right)^{m+1}\int_0^1 t^m h(\xi+(\mu-\xi)t)\,dt\\
&\quad\quad+\sum_{k=0}^{m-1}C_m^k\left(\frac{\mu-\xi}{\mu-\lambda}\right)^{k+1}
\left(\frac{\xi-\lambda}{\mu-\lambda}\right)^{m-k}\int_0^1 t^k h(\xi+(\mu-\xi)t)\,dt.
\end{align*}
\end{proof}

In the proof of Theorem \ref{thm:IndBase}, we shall need to factorize the double operator integral according to the decomposition of the symbol $\phi_{h,m}$ derived in Lemma \ref{thm:FactorDecomp} below.

Recall the following useful representation for positive fractions.

\begin{lemma}(\cite[Lemma 6]{PS-Acta})
\label{thm:g}
Let $\delta\in\R_+$.
There exists $g_\delta: \R\mapsto\C$ such that $\int_\R |s|^k|g_\delta(s)|\,ds<\infty$, $k\geq 0$, and such that for all $\lambda,\mu >0$ with $0\leq\frac\lambda\mu\leq \delta$,
\[\frac\lambda\mu=\int_\R g_\delta(s)\lambda^{\i s}\mu^{-\i s}\,ds.\]
\end{lemma}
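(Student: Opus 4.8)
The plan is to reduce the claimed integral representation to the Fourier inversion formula for a suitably chosen Schwartz function. Writing $x=\lambda/\mu$ and $u=\log x$, the hypothesis $0<\lambda/\mu\leq\delta$ becomes $u\in(-\infty,\log\delta]$, while $\lambda^{\i s}\mu^{-\i s}=x^{\i s}=e^{\i s u}$; hence it suffices to produce a function $g_\delta\in L^1(\R)$ with all moments finite such that
\[
e^u=\int_\R g_\delta(s)\,e^{\i s u}\,ds\qquad\text{for every }u\leq\log\delta.
\]

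First I would fix a smooth cut-off $\chi\in C^\infty(\R)$ with $\chi\equiv 1$ on $(-\infty,\log\delta]$ and $\chi\equiv 0$ on $[\log\delta+1,\infty)$, and set $\Phi(u):=e^u\chi(u)$. Then $\Phi\in C^\infty(\R)$, $\Phi(u)=e^u$ for all $u\leq\log\delta$, $\Phi$ is supported in $(-\infty,\log\delta+1]$, and on that half-line $\Phi$ together with all its derivatives decays exponentially as $u\to-\infty$ (each derivative of $e^u\chi(u)$ is $O(e^u)$ there). Consequently $\Phi$ belongs to the Schwartz space $\mathcal{S}(\R)$.

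Next I would let $g_\delta$ be the inverse Fourier transform of $\Phi$, normalized so that $\Phi(u)=\int_\R g_\delta(s)\,e^{\i s u}\,ds$ for all $u\in\R$; this identity holds pointwise because $\Phi$ and $g_\delta$ are both continuous and integrable. Since $\Phi\in\mathcal{S}(\R)$, also $g_\delta\in\mathcal{S}(\R)$, so $\int_\R|s|^k|g_\delta(s)|\,ds<\infty$ for every $k\geq 0$. Finally, for $\lambda,\mu>0$ with $0\leq\lambda/\mu\leq\delta$, putting $u=\log(\lambda/\mu)\leq\log\delta$ gives
\[
\frac{\lambda}{\mu}=e^u=\Phi(u)=\int_\R g_\delta(s)\,e^{\i s u}\,ds=\int_\R g_\delta(s)\,\lambda^{\i s}\mu^{-\i s}\,ds,
\]
which is the assertion.

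The only genuinely delicate point is the construction of a Schwartz extension $\Phi$ of $u\mapsto e^u$ restricted to $(-\infty,\log\delta]$: one must check that multiplying by the cut-off $\chi$ preserves rapid decay, which is immediate on the right (compact support) and follows on the left from the exponential bound $\frac{d^j}{du^j}\bigl(e^u\chi(u)\bigr)=O(e^u)$ as $u\to-\infty$. Everything else is the standard Fourier-inversion machinery, and the finiteness of all moments of $g_\delta$ is exactly the statement that $g_\delta$ is Schwartz.
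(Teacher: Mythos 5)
Your proof is correct: the reduction to $u=\log(\lambda/\mu)$, the smooth cut-off extension $\Phi(u)=e^u\chi(u)\in\mathcal{S}(\R)$, and Fourier inversion give exactly the stated representation with all moments of $g_\delta$ finite. The paper itself does not prove this lemma but quotes it from \cite[Lemma 6]{PS-Acta}, and your argument is essentially the same standard construction used there, so there is nothing to add.
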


\begin{lemma}
\label{thm:FactorDecomp}
Let $w\in\T$, $\delta\in\R_+$, $i,j\in\{0,\dots,N-1\}$, and let $g_\delta$ be the function from Lemma \ref{thm:g}. If $\frac{|z_i-w|}{|z_j-z_i|}\leq \delta$ and $\frac{|z_j-w|}{|z_j-z_i|}\leq \delta$, then for $m\in\N\cup\{0\}$,
\begin{align*}
&\phi_{h,m}(z_i,z_j)\\
&\;=\int_\R g_\delta(s)\left(\frac{|z_j-z_i|}{z_j-z_i}\right)^{m+1}
\left(\frac{z_i-w}{|z_i-w|}\right)^{m+1}\left(\frac{|z_i-w|}{|z_j-z_i|}\right)^{\i(m+1)s}\phi_{h,m}(z_i,w)\,ds
\displaybreak[2] \\
&\quad +\int_\R g_\delta(s)\left(\frac{|z_j-z_i|}{z_j-z_i}\right)^{m+1}
\left(\frac{z_j-w}{|z_j-w|}\right)^{m+1}\left(\frac{|z_j-w|}{|z_j-z_i|}\right)^{\i(m+1)s}\phi_{h,m}(w,z_j)\,ds
\displaybreak[2] \\
&\quad + \sum_{k=0}^{m-1}C_m^k\int_\R g_\delta(s)\left(\frac{|z_j-z_i|}{z_j-z_i}\right)^{m+1}
\left(\frac{|z_i-w|^{\i(m-k)s}|z_j-w|^{\i(k+1)s}}{|z_j-z_i|^{\i(m+1)s}}\right)\\
&\quad\quad\quad\quad\quad\quad\quad\quad\quad\quad
\cdot\left(\frac{z_j-w}{|z_j-w|}\right)^{k+1}
\left(\frac{z_i-w}{|z_i-w|}\right)^{m-k}\phi_{h,k}(w,z_j)\,ds.
\end{align*}
\end{lemma}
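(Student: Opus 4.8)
The plan is to derive Lemma \ref{thm:FactorDecomp} from Lemma \ref{thm:BaseDecomp} by a change-of-variables substitution using Lemma \ref{thm:g}. First I would specialize the decomposition in Lemma \ref{thm:BaseDecomp} to the case $\lambda=z_i$, $\mu=z_j$, $\xi=w$, which gives
\[\phi_{h,m}(z_i,z_j)=\left(\frac{w-z_i}{z_j-z_i}\right)^{m+1}\phi_{h,m}(z_i,w)+\left(\frac{z_j-w}{z_j-z_i}\right)^{m+1}\phi_{h,m}(w,z_j)+\sum_{k=0}^{m-1}C_m^k\left(\frac{z_j-w}{z_j-z_i}\right)^{k+1}\left(\frac{w-z_i}{z_j-z_i}\right)^{m-k}\phi_{h,k}(w,z_j).\]
The point is then to rewrite each ratio of complex numbers as a product of a unimodular ``phase'' factor and a ratio of positive real moduli, and apply Lemma \ref{thm:g} to the positive part only. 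Concretely, for the first term I would write $\frac{w-z_i}{z_j-z_i}=\left(\frac{|z_j-z_i|}{z_j-z_i}\right)\left(\frac{w-z_i}{|w-z_i|}\right)\frac{|w-z_i|}{|z_j-z_i|}$, so that $\left(\frac{w-z_i}{z_j-z_i}\right)^{m+1}=\left(\frac{|z_j-z_i|}{z_j-z_i}\right)^{m+1}\left(\frac{z_i-w}{|z_i-w|}\right)^{m+1}\left(\frac{|z_i-w|}{|z_j-z_i|}\right)^{m+1}$, where I use $|w-z_i|=|z_i-w|$ and absorb the sign $(-1)^{m+1}$ accordingly into the phase written in the statement. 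Under the hypothesis $\frac{|z_i-w|}{|z_j-z_i|}\leq\delta$, Lemma \ref{thm:g} applied with $\lambda=|z_i-w|$, $\mu=|z_j-z_i|$ converts the remaining real power $\left(\frac{|z_i-w|}{|z_j-z_i|}\right)^{m+1}$ (equivalently $\left(\frac{|z_i-w|}{|z_j-z_i|}\right)$ raised to power $m+1$, which I handle by noting $\frac{\lambda^{m+1}}{\mu^{m+1}}$ also lies in $[0,\delta^{m+1}]$, or more simply by applying the lemma to the base ratio and reading off the $(m+1)$-fold version from $g_{\delta}^{(m+1)}$ — I will use whichever phrasing is cleanest and invoke Lemma \ref{thm:g} with $\delta$ replaced by $\delta^{m+1}$) into $\int_\R g_{\delta}(s)\left(\frac{|z_i-w|}{|z_j-z_i|}\right)^{\i(m+1)s}\,ds$. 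This gives exactly the first integral in the claim.

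The second term is entirely analogous, using $\frac{z_j-w}{z_j-z_i}=\left(\frac{|z_j-z_i|}{z_j-z_i}\right)\left(\frac{z_j-w}{|z_j-w|}\right)\frac{|z_j-w|}{|z_j-z_i|}$ together with the hypothesis $\frac{|z_j-w|}{|z_j-z_i|}\leq\delta$, yielding the second integral with phase $\left(\frac{z_j-w}{|z_j-w|}\right)^{m+1}$ and real power $\left(\frac{|z_j-w|}{|z_j-z_i|}\right)^{\i(m+1)s}$. For the sum over $k$, I would write each summand's coefficient $\left(\frac{z_j-w}{z_j-z_i}\right)^{k+1}\left(\frac{w-z_i}{z_j-z_i}\right)^{m-k}$ as the product of phases $\left(\frac{|z_j-z_i|}{z_j-z_i}\right)^{m+1}\left(\frac{z_j-w}{|z_j-w|}\right)^{k+1}\left(\frac{z_i-w}{|z_i-w|}\right)^{m-k}$ times the positive real factor $\frac{|z_j-w|^{k+1}|z_i-w|^{m-k}}{|z_j-z_i|^{m+1}}$. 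Since both constituent ratios are $\leq\delta$, this real factor is at most $\delta^{m+1}$, so Lemma \ref{thm:g} (applied with the appropriate bound, e.g. $\delta^{m+1}$) converts it into $\int_\R g_\delta(s)\frac{|z_j-w|^{\i(k+1)s}|z_i-w|^{\i(m-k)s}}{|z_j-z_i|^{\i(m+1)s}}\,ds$, matching the $k$-th term of the third sum. Summing over $k$ and combining the three pieces produces the stated identity.

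The main technical point to be careful about is the bookkeeping of signs and the exponents in the phase factors: replacing $w-z_i$ by $-(z_i-w)$ shifts a factor $(-1)^{m+1}$, and one must check this sign is correctly absorbed into the unimodular phase $\left(\frac{z_i-w}{|z_i-w|}\right)^{m+1}$ (rather than into the real modulus ratio, which must stay positive for Lemma \ref{thm:g} to apply). The other subtlety is that Lemma \ref{thm:g} as stated handles a ratio $\lambda/\mu$, not its $(m+1)$st power directly; I will address this by invoking the lemma with $\delta$ replaced by $\delta^{m+1}$ and with $\lambda,\mu$ the $(m+1)$st powers of the moduli, or equivalently by absorbing the integer power into a single scalar integral representation. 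Neither point presents a genuine obstacle — the content of the lemma is purely the algebraic rearrangement of Lemma \ref{thm:BaseDecomp} combined with the integral representation of positive fractions — but the exponent and sign accounting is where an error would most likely creep in, so I would carry it out term by term.
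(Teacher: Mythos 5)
Your proposal is correct and is essentially the paper's own argument: the paper proves Lemma \ref{thm:FactorDecomp} precisely by specializing Lemma \ref{thm:BaseDecomp} to $\lambda=z_i$, $\mu=z_j$, $\xi=w$, factoring each coefficient into a unimodular phase times a ratio of moduli, and representing that positive ratio via Lemma \ref{thm:g}. Your two flagged subtleties are handled appropriately: invoking Lemma \ref{thm:g} for the ratio of $(m+1)$st powers (i.e.\ with $\delta$ replaced by $\delta^{m+1}$) is exactly what produces the exponent $\i(m+1)s$, and the sign bookkeeping only affects the unimodular phase factors, not the positive ratio to which Lemma \ref{thm:g} is applied.
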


\begin{proof}
The result follows from the straightforward decomposition for complex numbers\footnote{If $z=0$, we define
$\frac{|z|}{z}:=0$ and $\frac{z}{|z|}:=0$.}
\begin{align*}
\frac{\mu-\xi}{\mu-\lambda}=
\left(\frac{|\mu-\lambda|}{\mu-\lambda}\right)\left(\frac{\mu-\xi}{|\mu-\xi|}\right)
\left(\frac{|\mu-\xi|}{|\mu-\lambda|}\right),\quad \mu\neq\lambda.
\end{align*}
and Lemmas \ref{thm:BaseDecomp} and \ref{thm:g}.
\end{proof}

Application of \cite[Theorem 3.4]{KPSS} to the functions
\[g_1(x_1,x_2)=\frac{x_1}{\sqrt{x_1^2+x_2^2}},\quad g_2(x_1,x_2)=\frac{x_2}{\sqrt{x_1^2+x_2^2}},\quad
g_3(x_1,x_2)=\big(\sqrt{x_1^2+x_2^2}\big)^{\i s}\] defined on $\R^2\setminus\{0\}$ and multiplicativity of the double operator integral from Lemma \ref{MOI-algebra} \eqref{MOI-A-composition} implies the following result, which will be frequently applied in the paper.

\begin{lemma}
\label{thm:KPSS}
Let $B$, $C$ be subsets\footnote{The sets $B$ and $C$ vary, but the estimates do not depend on the choice of $B$ and $C$, so they are omitted in the notation of the respective double operator integrals.} of $\{0,\dots,N-1\}$ and let $m\in\N$, $s\in\R$. For $x\in S^\alpha$, $1<\alpha<\infty$, denote
\begin{align*}
&\Upsilon_m(x):=\sum_{i\in B, j\in C}\left(\frac{z_i-z_j}{|z_i-z_j|}\right)^m E_i x E_j\\
&\Upsilon_{-m}(x):=\sum_{i\in B, j\in C}\left(\frac{|z_i-z_j|}{z_i-z_j}\right)^m E_i x E_j\\
&\Gamma_s(x):=\sum_{i\in B, j\in C}|z_i-z_j|^{\i s} E_i x E_j.
\end{align*}
Then, there are constants $c_\alpha$ and $c_{\alpha,m}$ such that
\begin{align*}
&\|\Upsilon_m(x)\|_\alpha\leq c_{\alpha,m}\|x\|_\alpha,\\
&\|\Upsilon_{-m}(x)\|_\alpha\leq c_{\alpha,m}\|x\|_\alpha,\\
&\|\Gamma_s(x)\|_\alpha\leq c_{\alpha}(1+|s|+|s|^2)\|x\|_\alpha.
\end{align*}
\end{lemma}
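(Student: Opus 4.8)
The statement to prove is Lemma \ref{thm:KPSS}, concerning the double operator integrals $\Upsilon_m$, $\Upsilon_{-m}$, and $\Gamma_s$ built over the discrete spectral measure $\{E_j\}$ of a unitary with finite spectrum $\{z_j\}$. The plan is to realize each of these transformations as a double operator integral whose symbol is the pullback, under the map $z \mapsto (\mathrm{Re}\, z, \mathrm{Im}\, z)$ identifying $\T \subset \C$ with a subset of $\R^2 \setminus \{0\}$ (after translating so that $z_i - z_j$ becomes the relevant variable), of one of the three functions $g_1, g_2, g_3$ on $\R^2 \setminus \{0\}$ displayed just before the statement. Concretely, writing $z_i - z_j = (x_1^{(i)} - x_1^{(j)}) + \i(x_2^{(i)} - x_2^{(j)})$ where $z_k = x_1^{(k)} + \i x_2^{(k)}$, the symbol of $\Upsilon_1$ is $(g_1 + \i g_2)(x^{(i)} - x^{(j)})$, the symbol of $\Gamma_s$ is $g_3(x^{(i)} - x^{(j)})$, and $\Upsilon_m$, $\Upsilon_{-m}$ are obtained from $\Upsilon_{\pm 1}$ by the multiplicativity of the double operator integral over the symbol (Lemma \ref{MOI-algebra} \eqref{MOI-A-composition} with $k = 1$, or equivalently by taking the $m$th power of the symbol, which corresponds to composing the transformation with itself $m$ times).

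The key input is \cite[Theorem 3.4]{KPSS}, which provides $S^\alpha \to S^\alpha$ boundedness, $1 < \alpha < \infty$, of double operator integrals whose symbols arise from functions on $\R^d \setminus \{0\}$ satisfying suitable regularity/homogeneity hypotheses — the three functions $g_1, g_2, g_3$ above being precisely of that admissible form (they are smooth away from the origin and homogeneous of degree $0$, resp. of degree $\i s$). First I would record that $g_1$ and $g_2$ fall directly under the hypotheses of \cite[Theorem 3.4]{KPSS}, which yields a constant $c_\alpha$ with $\|\Upsilon_{\pm 1}\| \leq c_\alpha$ on $S^\alpha$; then, since the symbol of $\Upsilon_m$ is the pointwise $m$th power of that of $\Upsilon_1$ (and likewise for $\Upsilon_{-m}$), an application of Lemma \ref{MOI-algebra} \eqref{MOI-A-composition} iterated $m-1$ times gives $\|\Upsilon_m\|, \|\Upsilon_{-m}\| \leq c_\alpha^m =: c_{\alpha,m}$. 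For $\Gamma_s$, applying \cite[Theorem 3.4]{KPSS} to $g_3$ yields a bound, and the point is to extract the explicit dependence on $s$: the relevant Sobolev-type norms of $g_3(x) = |x|^{\i s}$ (or its restriction to the sphere after homogeneity reduction) grow polynomially in $s$ — differentiating $|x|^{\i s}$ brings down factors linear in $s$, and up to the two derivatives that enter the hypothesis of \cite[Theorem 3.4]{KPSS} one picks up at most $1 + |s| + |s|^2$. This is what produces the stated bound $\|\Gamma_s(x)\|_\alpha \leq c_\alpha(1 + |s| + |s|^2)\|x\|_\alpha$.

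The main obstacle — really the only nontrivial point — is verifying that $g_1, g_2, g_3$ genuinely satisfy the hypotheses of \cite[Theorem 3.4]{KPSS} and tracking the quantitative $s$-dependence for $g_3$; the singularity of all three at the origin is harmless here because the arguments $z_i - z_j$ that actually occur are separated from $0$ whenever $i \neq j$ (and the $i = j$ terms, where $z_i - z_j = 0$, contribute $0$ by the convention $\frac{|z|}{z} = \frac{z}{|z|} := 0$ recorded in the footnote, consistent with the homogeneous-degree-$0$ extension). I would also remark that the sets $B, C \subseteq \{0, \dots, N-1\}$ merely restrict the range of summation and hence correspond to composing with diagonal cut-off projections, which are contractive on every $S^\alpha$; thus the constants are genuinely independent of $B$ and $C$, as the footnote asserts. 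Everything else is the routine bookkeeping of identifying symbols and invoking the algebraic calculus of Lemma \ref{MOI-algebra}.
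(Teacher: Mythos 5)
Your proposal is correct and follows essentially the same route as the paper: the authors likewise obtain the lemma by applying \cite[Theorem 3.4]{KPSS} to the functions $g_1,g_2,g_3$ on $\R^2\setminus\{0\}$ (whose pullbacks give the symbols of $\Upsilon_{\pm1}$ and $\Gamma_s$, with the $(1+|s|+|s|^2)$ factor coming from the $s$-dependence of the derivatives of $|x|^{\i s}$) and then invoking the multiplicativity/composition property of Lemma \ref{MOI-algebra} to pass to $\Upsilon_{\pm m}$, the restriction to $B\times C$ being absorbed by the spectral projections. Your additional remarks on the diagonal terms and the independence of the constants from $B$ and $C$ are consistent with the paper's treatment.
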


\begin{proof}[Proof of Theorem \ref{thm:IndBase}]
Denote
\[Q_k:=\left\{z\in\T: \arg(z)\in \left[\frac{2\pi k}{3},\frac{2\pi (k+1)}{3}\right)\right\},
\quad k=0,1,2,\]
\[D_{k_0,k_1}=\{(z_{j_0},z_{j_1}):\; z_{j_0},z_{j_1}\in Q_{k_0}\cup Q_{k_1}\},\quad k_0,k_1=0,1,2,\]
\[D_d=\{(z_{j_0},z_{j_1}):\, z_{j_0},z_{j_1}\in Q_k,\, k=0,1,2\}.\]
Since
\[T_{\phi_{h,m}}=T_{\phi_{h,m}}^{D_{0,1}}+T_{\phi_{h,m}}^{D_{1,2}}+T_{\phi_{h,m}}^{D_{0,2}}
-T_{\phi_{h,m}}^{D_d},\]
it is enough to prove the theorem separately for each of the summands. We shall demonstrate only the case of
$T_{\phi_{h,m}}^{D_{0,1}}$; the other cases can be handled completely analogously.

The estimate \eqref{idest} for $\alpha=2$ is well-known.
The boundedness of $T_{\phi_{h,m}}^{D_{0,1}}$ for $\alpha\neq 2$ is proved similarly to how it was done in \cite[Theorems 4.1 and 5.6]{PSS} in the self-adjoint case.

As a first step, we show that if $\alpha,\beta\in (2,\infty)$ and \[2^{-1}=\alpha^{-1}+\beta^{-1}\] and if $\|h\|_\infty\leq 1$ (and, hence, $\|\phi_{h,m}\|_\infty\leq 1$), then
\begin{align}
\label{sa0}
\big\|T_{\phi_{h,m}}^{D_{0,1}}\big\|_\alpha\leq c_{\alpha,m}\left(1+\big\|T_{\phi_{h,m}}^{D_{0,1}}\big\|_\beta\right),
\end{align} by showing
\begin{align}
\label{sa1}
\left|\tr\left(y T_{\phi_{h,m}}^{D_{0,1}}(x)\right)\right|\leq c_{\alpha,m}\left(1+\big\|T_{\phi_{h,m}}^{D_{0,1}}\big\|_\beta\right)\|x\|_\alpha\|y\|_{\alpha'},
\end{align} where $\alpha^{-1}+\alpha'^{-1}=1$. In the proof below we assume that
$\|x\|_\alpha=1$ and $\|y\|_{\alpha'}=1$.

Let $N$ from Definition \ref{MOI-def} be divisible by $3$.
To show \eqref{sa1}, recall that the triangular truncation is a bounded linear operator on $S^\alpha$, $1<\alpha<\infty$ (see, e.g., \cite{DDPS} or \cite{Volterra}). By standard techniques, one can see that $T_{\phi_{h,m}}^{A_0}$ is bounded on the diagonal set $A_0=\{(z_{j_0},z_{j_1})\in D_{0,1}:\; z_{j_0}=z_{j_1}\}$. (Details can be found on p. 383 of \cite{PS-Acta}. We will provide a more general argument in Lemma 2.18.) Thus, we can assume that $x$ is upper triangular and off-diagonal and $y$ is lower triangular with respect to the family of projections $\{E_j\}_{j=0}^{N/3-1}$ (as in Definition \ref{MOI-def}).

We can assume that $y$ is finite rank because the class of lower triangular finite rank operators is norm dense in the lower-triangular part of $S^{\alpha'}$. Given $\epsilon>0$, there is a factorization $y=ab$, where $a\in S^2$ and $b\in S^\beta$ are lower triangular and \[1\leq\|a\|_2\|b\|_\beta\leq 1+\epsilon\] (see, e.g., \cite[Theorem 8.3]{PiXu} and references cited therein). Therefore,
\[\tr\left(y T_{\phi_{h,m}}^{D_{0,1}}(x)\right)=\sum_{i\leq l\leq j,\; i\neq j\atop i,j,l=0,\dots,N/3-1\;}
\phi_{h,m}(z_i,z_j)\tr\big(E_j a E_l b E_i x E_j\big).\]
Note that for $i,j,l$ as in the summation above and $\delta=2/\sqrt{3}$, we have
$\frac{|z_i-z_l|}{|z_j-z_i|}\leq \delta$ and $\frac{|z_j-z_l|}{|z_j-z_i|}\leq \delta$.
The algebraic properties of the multiple operator integrals stated in Lemma \ref{MOI-algebra} and the decomposition of the function $\phi_{h,m}$ from Lemma \ref{thm:FactorDecomp} with $\omega=z_l$ imply
\begin{align}
\label{5.15}
&\tr\left(y T_{\phi_{h,m}}^{D_{0,1}}(x)\right)\\
\nonumber
&\;=\int_\R g_\delta(s)\tr\left(a\cdot\Upsilon_{m+1}\left(\Gamma_{s(m+1)}\big(T_{\bar\phi_{h,m}}(b)\big)\right)\cdot \Upsilon_{-(m+1)}\big(\Gamma_{-s(m+1)}(x)\big)\right)\,ds \displaybreak[2] \\
\nonumber
&\quad + \int_\R g_\delta(s)\tr\left(\Upsilon_{m+1}\left(\Gamma_{s(m+1)}\big(T_{\bar\phi_{h,m}}(a)\big)\right)\cdot b\cdot \Upsilon_{-(m+1)}\big(\Gamma_{-s(m+1)}(x)\big)\right)\,ds \displaybreak[2] \\
\nonumber
&\quad + \sum_{k=0}^{m-1}C_m^k\int_\R g_\delta(s)\tr\bigg(T_{\bar\phi_{h,k}}\left(\Upsilon_{k+1}\big(\Gamma_{s(k+1)}(a)\big)\right)
\cdot\Upsilon_{1}\big(\Gamma_{s(m-k)}(b)\big)\\
\nonumber
&\quad\quad\quad\quad\quad\quad\quad\quad\quad\quad\quad\quad\quad\quad\quad\quad\quad\quad\quad\quad
\cdot\Upsilon_{-(m+1)}\big(\Gamma_{-s(m+1)}(x)\big)\bigg)\,ds,
\end{align} where $\bar\phi_{h,m}(\lambda,\mu)=\phi_{h,m}(\mu,\lambda)$.
By properties of the double operator integral on $S^2$ and Lemma \ref{thm:KPSS},
\[\left\|T_{\bar\phi_{h,k}}\left(\Upsilon_{k+1}\big(\Gamma_{s(k+1)}(a)\big)\right)\right\|_2
\leq\|\phi_{h,k}\|_\infty\big\|\Upsilon_{k+1}\big(\Gamma_{s(k+1)}(a)\big)\big\|_2\leq\|a\|_2.\]
Next, we apply Lemma \ref{thm:KPSS} and $\|T_{\phi_{h,m}}\|=\|T_{\bar\phi_{h,m}}\|$ (see Lemma \ref{MOI-algebra} \eqref{MOI-A-involution}) to derive
\begin{align*}
&\bigg|\tr\bigg(T_{\bar\phi_{h,k}}\left(\Upsilon_{k+1}\big(\Gamma_{s(k+1)}(a)\big)\right)
\cdot\Upsilon_{1}\big(\Gamma_{s(m-k)}(b)\big)
\cdot\Upsilon_{-(m+1)}\big(\Gamma_{-s(m+1)}(x)\big)\bigg)\bigg|\\
&\quad \leq c_\alpha\|a\|_2\left\|\Upsilon_{1}\big(\Gamma_{s(m-k)}(b)\big)\right\|_\beta
\left\|\Upsilon_{-(m+1)}\big(\Gamma_{-s(m+1)}(x)\big)\right\|_\alpha\\
&\quad \leq c_{\alpha,m}(1+|s(m+1)|+|s(m+1)|^2)^2(1+\epsilon)
\end{align*}
as well as
\begin{align*}
&\bigg|\tr\left(a\cdot\Upsilon_{m+1}\left(\Gamma_{s(m+1)}\big(T_{\bar\phi_{h,m}}(b)\big)\right)\cdot \Upsilon_{-(m+1)}\big(\Gamma_{-s(m+1)}(x)\big)\right)\bigg|\\
&\quad \leq \|a\|_2\left\|\Upsilon_{m+1}\left(\Gamma_{s(m+1)}\big(T_{\bar\phi_{h,m}}(b)\big)\right)\right\|_\beta
\left\|\Upsilon_{-(m+1)}\big(\Gamma_{-s(m+1)}(x)\big)\right\|_\alpha\\
&\quad \leq c_{\alpha,m}(1+|s(m+1)|+|s(m+1)|^2)^2\|T_{\phi_{h,m}}\|_\beta(1+\epsilon)
\end{align*}
and
\begin{align*}
&\left|\tr\left(\Upsilon_{m+1}\left(\Gamma_{s(m+1)}\big(T_{\bar\phi_{h,m}}(a)\big)\right)\cdot b\cdot \Upsilon_{-(m+1)}\big(\Gamma_{-s(m+1)}(x)\big)\right)\right|\\
&\quad \leq c_{\alpha,m}(1+|s(m+1)|+|s(m+1)|^2)^2(1+\epsilon).
\end{align*}
By letting $\epsilon \rightarrow 0$, then applying the triangle inequality and just derived inequalities to \eqref{5.15}, we arrive at \eqref{sa1} and, hence, at \eqref{sa0}.

For $\alpha>2$, we derive $\big\|T_{\phi_{h,m}}^{D_{0,1}}\big\|_\alpha\leq c_{\alpha,m}$ from \eqref{sa0} by interpolation. Fix $\alpha>4$ and fix $\beta\in(2,\alpha)$.
There is $\theta\in (0,1)$ such that
\[\frac{1}{\beta}=\frac{1-\theta}{2}+\frac{\theta}{\alpha}.\]
By the complex interpolation method \cite{BL},
\[\big\|T_{\phi_{h,m}}^{D_{0,1}}\big\|_\beta\leq \big\|T_{\phi_{h,m}}^{D_{0,1}}\big\|_2^{1-\theta}
\big\|T_{\phi_{h,m}}^{D_{0,1}}\big\|_\alpha^\theta\leq \big\|T_{\phi_{h,m}}^{D_{0,1}}\big\|_\alpha^\theta.\]
Combining the latter inequality with \eqref{sa0} gives
\[\big\|T_{\phi_{h,m}}^{D_{0,1}}\big\|_\alpha\leq c_{\alpha,m}\bigg(1+\big\|T_{\phi_{h,m}}^{D_{0,1}}\big\|_\alpha^\theta\bigg),\quad 0<\theta<1,\]
which implies the estimate \eqref{idest} for $\alpha>4$. By duality and Lemma \ref{MOI-algebra} \eqref{MOI-A-duality}, we obtain this estimate for $1<\alpha<2$. Applying the interpolation again completes the proof of the theorem for all $\alpha \in (1,\infty)$.
\end{proof}

To make an inductive reduction to the lower order case, we need decompositions for functions more general than $\phi_{h,m}$.

For $h$ a polynomial, $m,k\in \N\cup\{0\}$, $\{\lambda_j\}_{j=0}^n\subset \T$, denote
\begin{align}
\label{phinhmk}
&\phi_{n,h,m,k}(\lambda_0,\dots,\lambda_n)\\
\nonumber
&\; :=\int_0^1\int_0^{t_n}\dots\int_0^{t_4}\int_0^{t_3}
\int_0^t t^m s^k h\big(\lambda_n+(\lambda_{n-1}-\lambda_n)t_n+\dots\\
\nonumber
&\quad\quad\quad
+(\lambda_2-\lambda_3)t_3+(\lambda_1-\lambda_2)t+(\lambda_0-\lambda_1)s\big)\,ds\, dt\, dt_3 \dots dt_{n-1}\, dt_n.
\end{align}
By a standard property of the divided difference \cite[Formula (7.12)]{DVL},
\[\phi_{n,f^{(n)},0,0}(\lambda_0,\dots,\lambda_n)=f^{[n]}(\lambda_0,\dots,\lambda_n).\]
In case of three variables, \eqref{phinhmk} should be understood as
\begin{align*}
\phi_{2,h,m,k}(\lambda_0,\lambda_1,\lambda_2)=
\int_0^1\int_0^t t^m s^k h\big(\lambda_2+(\lambda_1-\lambda_2)t+(\lambda_0-\lambda_1)s\big)\,ds\, dt
\end{align*}
and in case of two variables as
\begin{align*}
\phi_{1,h,m,k}(\lambda_0,\lambda_1)=
\int_0^1 t^k h\big(\lambda_1+(\lambda_0-\lambda_1)t\big)\, dt=\phi_{h,k}(\lambda_0,\lambda_1).
\end{align*}
Below, we reduce the functions $\phi_{n,h,m,k}$ to the same type of functions of the previous order first for $n=2$ (see Lemmas \ref{thm:tmh} and \ref{thm:tkh}) and then for $n>2$ (see Lemma \ref{thm:tmkh}).

Denote $u(t,s):=\kappa\xi+(\lambda-\xi)t+(\mu-\lambda)s$.

The following two lemmas have assertions similar to the one in \cite[Lemma 5.9]{PSS}; however, the proof of the latter does not extend to the complex plane. The main ingredient of the new method is the usage of Green's theorem.

\begin{lemma}
\label{thm:tmh}
Let $\kappa \in (0,1]$, $\lambda,\xi,\mu\in\T$, with $\lambda\neq\mu$, and 
let $h$ be a polynomial. Then,
\begin{align}
\label{eq:tmh}
&\int_0^\kappa\int_0^t t^{m-1}h(\kappa\xi+(\mu-\xi)t+(\lambda-\mu)s)\,ds\,dt\\
\nonumber
&=\frac1m\left(\frac{\xi-\lambda}{\mu-\lambda}\right)\int_0^\kappa (\kappa^m-t^m)h(\kappa\xi+(\lambda-\xi)t)\,dt\\
\nonumber
&\quad+\frac1m\left(\frac{\mu-\xi}{\mu-\lambda}\right)\int_0^\kappa (\kappa^m-t^m)h(\kappa\xi+(\mu-\xi)t)\,dt.
\end{align}
\end{lemma}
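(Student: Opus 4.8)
The plan is to reduce the two-dimensional integral on the left to a one-dimensional boundary integral by recognizing the integrand as (part of) an exact differential form and applying Green's theorem. First I would fix the triangle $\Delta = \{(t,s) : 0 \le s \le t \le \kappa\}$ in the $(t,s)$-plane and consider the holomorphic function $H$ with $H' = h$ (a polynomial, since $h$ is). The key observation is that along any affine path $z(t,s) = \kappa\xi + (\mu-\xi)t + (\lambda-\mu)s$ the composition $H(z(t,s))$ has partial derivatives $\partial_t H(z) = (\mu-\xi)h(z)$ and $\partial_s H(z) = (\lambda-\mu)h(z)$; this lets me trade the weight $h$ against $\frac{1}{\mu-\xi}\partial_t$ or $\frac{1}{\lambda-\mu}\partial_s$, which is exactly the mechanism that produces the factors $\frac{\xi-\lambda}{\mu-\lambda}$ and $\frac{\mu-\xi}{\mu-\lambda}$ on the right-hand side after combining the two choices.

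Concretely, I would write $t^{m-1}h(z) = \frac{1}{m}\,\partial_t\!\big(t^m\big)\cdot h(z)$ and then integrate by parts in $t$ inside the iterated integral, or equivalently apply Green's theorem to the $1$-form $\omega = \frac{1}{m} t^m h(z(t,s))\,\big(\text{suitable combination of } dt, ds\big)$ over $\Delta$. Because $h(z)$ itself is a $t$- and $s$-derivative of $H(z)$ up to the constants above, the double integral collapses: the area term either vanishes or is itself expressible as a $1$-D integral, and what survives is the integral of $\frac{1}{m}(\kappa^m - t^m)h(\cdots)$ over the two legs of $\partial\Delta$ that lie on the lines $s=0$ and $s=t$. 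On the edge $s=0$ the path becomes $t \mapsto \kappa\xi + (\mu-\xi)t$, giving the second term on the right; on the edge $s=t$ it becomes $t \mapsto \kappa\xi + (\lambda-\xi)t$, giving the first term; the vertical edge $t=\kappa$ contributes a boundary value at $t=\kappa$ that is killed by the factor $\kappa^m - t^m$ vanishing there, which is precisely why the combination $\kappa^m - t^m$ (rather than $t^m$ alone) appears. Matching the geometric prefactors $\frac{\xi-\lambda}{\mu-\lambda}$ and $\frac{\mu-\xi}{\mu-\lambda}$ then amounts to expressing the direction vectors $\lambda-\xi$ and $\mu-\xi$ of the two non-vertical edges as linear combinations of $\mu-\xi$ and $\lambda-\mu$ — an elementary identity in $\C$.

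I expect the main obstacle to be purely bookkeeping: choosing the right $1$-form (or, equivalently, organizing the integration by parts so that the leftover area integral genuinely cancels rather than producing a new two-dimensional remainder), and then carefully tracking orientations and endpoint contributions on $\partial\Delta$ so that the signs and the constants $\frac{1}{m}$ come out exactly as stated. Since $h$ is a polynomial everything is entire and there are no convergence or contour-deformation subtleties — the Cauchy/Green's theorem step is applied on a genuine compact region in the parameter plane, not in the $z$-plane — so once the correct differential form is identified the computation is routine and the identity follows by comparing the two resulting line integrals term by term.
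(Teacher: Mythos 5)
Your plan is sound and it is built on the same core device as the paper's proof: view $t^{m-1}h(z(t,s))$, with $z(t,s)=\kappa\xi+(\mu-\xi)t+(\lambda-\mu)s$, as $\partial_t Q-\partial_s P$ for explicit $P,Q$ proportional to an antiderivative of $t^{m-1}$ times $h(z)$, and apply Green's theorem on the parameter triangle $\{0\le s\le t\le\kappa\}$, the edges $s=t$ and $s=0$ producing the two terms of \eqref{eq:tmh}. One detail in your write-up needs fixing, and it is exactly where your route diverges (favourably) from the paper's: the $1$-form you display uses $\tfrac1m t^m h(z)$, and with that choice the edge $t=\kappa$ does \emph{not} die --- it contributes $\tfrac{\kappa^m}{m}$ times the integral of $h$ along the chord from $\kappa\mu$ to $\kappa\lambda$, which is precisely the term the paper must then split into the two desired chords by invoking the Cauchy-theorem decomposition of Lemma \ref{thm:BaseDecomp} (with $m=0$). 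To make your claim that the vertical edge is ``killed by $\kappa^m-t^m$'' literally true, take instead the antiderivative $\tfrac1m(t^m-\kappa^m)$, i.e.\ $Q(s,t)=\tfrac1m(t^m-\kappa^m)h(z)$ and $P(s,t)=\tfrac1m\,\tfrac{\mu-\xi}{\lambda-\mu}(t^m-\kappa^m)h(z)$, which only requires $\lambda\neq\mu$; then $\partial_t Q-\partial_s P=t^{m-1}h(z)$ with no area remainder, the $t=\kappa$ edge vanishes identically, the edge $s=0$ yields the second right-hand term, and the edge $s=t$ (where $z=\kappa\xi+(\lambda-\xi)t$) yields the first, the coefficient arising from $1+\tfrac{\mu-\xi}{\lambda-\mu}=\tfrac{\xi-\lambda}{\mu-\lambda}$. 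With this choice your argument is in fact a streamlining of the paper's: it needs no preliminary change of variables interchanging the roles of $\lambda$ and $\mu$, no rescaling to $\kappa=1$, no appeal to Lemma \ref{thm:BaseDecomp}, and no separate treatment of the degenerate cases $\lambda=\xi$ or $\xi=\mu$, since only $\lambda\neq\mu$ (guaranteed by hypothesis) is ever used.
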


\begin{proof}
It is enough to prove the lemma for $\kappa=1$ and all $\lambda,\xi,\mu\in\overline\D$, with $|\lambda|=|\xi|=|\mu|$ and $\lambda\neq\mu$, then make the substitution $\lambda=\kappa\tilde\lambda$, $\xi=\kappa\tilde\xi$, and $\mu=\kappa\tilde\mu$, $\tilde t=\kappa t$, $\tilde s=\kappa s$ and derive the formula for $\tilde\lambda$, $\tilde\xi$, and $\tilde\mu$.

Note that
\begin{align}
\label{inteq}
\nonumber
&\int_0^1\int_0^t t^{m-1}h(\xi+(\mu-\xi)t+(\lambda-\mu)s)\,ds\,dt\\
&\;=\int_0^1\int_0^t t^{m-1}h(\xi+(\lambda-\xi)t+(\mu-\lambda)s)\,ds\,dt
\end{align}
(the first integral can be obtained from the second one by substituting $x=t-s$, $dx=-ds$),
so it is enough to prove
\begin{align}
\label{eq:tmhp}
&\int_0^1\int_0^t t^{m-1}h(\xi+(\lambda-\xi)t+(\mu-\lambda)s)\,ds\,dt\\
\nonumber
&=\frac1m\left(\frac{\xi-\lambda}{\mu-\lambda}\right)\int_0^1 (1-t^m)h(\xi+(\lambda-\xi)t)\,dt\\
\nonumber
&\quad+\frac1m\left(\frac{\mu-\xi}{\mu-\lambda}\right)\int_0^1 (1-t^m)h(\xi+(\mu-\xi)t)\,dt.
\end{align}

First we assume that all points $\lambda,\xi,\mu$ are distinct. It is simple to verify the equality
\[t^{m-1}h(\xi+(\lambda-\xi)t+(\mu-\lambda)s)=\frac{\partial Q}{\partial t}(s,t)-\frac{\partial P}{\partial s}(s,t),\] where
\begin{align*}
&Q(s,t)=\frac1m t^m h(\xi+(\lambda-\xi)t+(\mu-\lambda)s),\\
&P(s,t)=\frac1m\frac{\lambda-\xi}{\mu-\lambda} t^m h(\xi+(\lambda-\xi)t+(\mu-\lambda)s).
\end{align*}
By Green's theorem applied to the function
\[(s,t)\mapsto t^{m-1}h(\xi+(\lambda-\xi)t+(\mu-\lambda)s),\] we have
\[\iint_D \left(\frac{\partial Q}{\partial t}-\frac{\partial P}{\partial s}\right)\,ds\,dt
=\oint_{\partial D} Q\,ds+\oint_{\partial D} P\,dt,\]
where $D\subset\C$ is the triangular region with vertices at the points $\lambda,\xi,\mu$ with the positively oriented boundary $\partial D$ given by the equations $s=0$, $t=1$, and $s=t$ (the boundary is also simple, closed, and piecewise smooth).
Thus,
\begin{align}
\label{QP1}
\nonumber
&\oint_{\partial D} Q\,ds+\oint_{\partial D} P\,dt\\
\nonumber
&\;=\frac1m\left(\frac{\lambda-\xi}{\mu-\lambda}\right)\int_0^1 t^m h(\xi+(\lambda-\xi)t)\,dt
-\frac1m\int_0^1 t^m h(\xi+(\mu-\xi)t)\,dt\\
&\quad-\frac1m\left(\frac{\lambda-\xi}{\mu-\lambda}\right)\int_0^1 t^m h(\xi+(\mu-\xi)t)\,dt+\frac1m\int_0^1 h(\lambda+(\mu-\lambda)s)\,ds.
\end{align}
We apply Lemma \ref{thm:BaseDecomp} (with $m=0$) to rewrite the latter integral in the form
\begin{align}
\label{QP2}
&\frac1m\int_0^1 h(\lambda+(\mu-\lambda)s)\,ds\\
\nonumber
&\quad=\frac1m\left(\frac{\xi-\lambda}{\mu-\lambda}\right)\int_0^1 h(\lambda+(\xi-\lambda)t)\,dt+
\frac1m\left(\frac{\mu-\xi}{\mu-\lambda}\right)\int_0^1 h(\xi+(\mu-\xi)t)\,dt.
\end{align}
Substituting \eqref{QP2} into \eqref{QP1} and combining the second and third terms of \eqref{QP1} gives
\eqref{eq:tmhp}.

When $\lambda=\xi$, upon changing the order of integration, the left hand side of \eqref{eq:tmhp} equals
\begin{align*}
\int_0^1\int_s^1 t^{m-1}h(\xi+(\mu-\xi)s)\,dt\,ds
=\frac1m\int_0^1(1-s^m)h(\xi+(\mu-\xi)s)\,ds,
\end{align*}
which also equals the right hand side of \eqref{eq:tmhp}.

When $\xi=\mu$, the left hand side of \eqref{eq:tmhp} equals
\[\int_0^1\int_0^t t^{m-1}h(\xi+(\lambda-\xi)(t-s))\,ds\,dt.\]
We make substitution $x=t-s$ and change the order of integration to obtain $\int_0^1\int_x^1 t^{m-1}h(\xi+(\lambda-\xi)x)\,dt\,dx$, which coincides with the right hand side of \eqref{eq:tmhp}.
\end{proof}

\begin{lemma}
\label{thm:tkh}
Let $\kappa \in (0,1]$, $\lambda,\xi,\mu\in\T$, with $\lambda\neq\xi$, and 
let $h$ be a polynomial.
Then,
\begin{align}
\label{eq:skh}
&\int_0^\kappa\int_0^t s^{k-1}h(\kappa\xi+(\lambda-\xi)t+(\mu-\lambda)s)\,ds\,dt\\
\nonumber
&=\frac1k\left(\frac{\mu-\lambda}{\xi-\lambda}\right)\int_0^\kappa t^k h(\kappa\lambda+(\mu-\lambda)t)\,dt\\
\nonumber
&\quad-\frac1k\left(\frac{\mu-\xi}{\xi-\lambda}\right)\int_0^\kappa t^k h(\kappa\xi+(\mu-\xi)t)\,dt.
\end{align}
\end{lemma}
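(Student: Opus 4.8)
The plan is to follow the same scheme as the proof of Lemma \ref{thm:tmh}: first reduce to the case $\kappa=1$ by rescaling the variables of integration, and then apply Green's theorem after writing the integrand as a combination of partial derivatives. Substituting $t=\kappa t'$ and $s=\kappa s'$ in the double integral on the left of \eqref{eq:skh}, and likewise in the two single integrals on its right, and replacing $h(\cdot)$ by the polynomial $h(\kappa\,\cdot)$, one checks that both sides of \eqref{eq:skh} pick up the same factor $\kappa^{k+1}$; hence it suffices to prove \eqref{eq:skh} for $\kappa=1$, all $\lambda,\xi,\mu\in\T$ with $\lambda\neq\xi$, and all polynomials $h$.

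Write $u(s,t):=\xi+(\lambda-\xi)t+(\mu-\lambda)s$ and set
\[
Q(s,t):=\frac{\mu-\lambda}{k(\xi-\lambda)}\,s^{k}h\big(u(s,t)\big),\qquad
P(s,t):=-\frac1k\,s^{k}h\big(u(s,t)\big),
\]
which are polynomials in $(s,t)$ since $\lambda\neq\xi$. Because $\partial u/\partial t=\lambda-\xi$, $\partial u/\partial s=\mu-\lambda$ and $\frac{\mu-\lambda}{\xi-\lambda}(\lambda-\xi)=-(\mu-\lambda)$, the $h'(u)$-terms arising in $\partial Q/\partial t$ and in $\partial P/\partial s$ coincide and therefore cancel in the difference, while differentiating $s^{k}$ in $P$ contributes $s^{k-1}h(u)$ to $-\partial P/\partial s$; thus
\[
\frac{\partial Q}{\partial t}-\frac{\partial P}{\partial s}=s^{k-1}h\big(u(s,t)\big).
\]
Applying Green's theorem exactly as in the proof of Lemma \ref{thm:tmh}, over the triangle $D=\{(s,t):0\le s\le t\le 1\}$ whose positively oriented boundary is described by $s=0$, $t=1$, $s=t$, we obtain
\[
\int_0^1\!\int_0^t s^{k-1}h\big(u(s,t)\big)\,ds\,dt=\oint_{\partial D}Q\,ds+\oint_{\partial D}P\,dt.
\]

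Next I would evaluate the line integral edge by edge. On the edge $s=0$ both $Q\,ds$ and $P\,dt$ vanish, the first because $ds=0$ and the second because $s^{k}=0$; this is precisely where the present computation is simpler than that of Lemma \ref{thm:tmh}, and it explains why no analogue of the appeal to Lemma \ref{thm:BaseDecomp} is needed here. On the edge $t=1$ we have $u=\lambda+(\mu-\lambda)s$ and $dt=0$, so the contribution is $\frac1k\,\frac{\mu-\lambda}{\xi-\lambda}\int_0^1 t^{k}h(\lambda+(\mu-\lambda)t)\,dt$, the first term on the right of \eqref{eq:skh}. On the edge $s=t$ we have $u=\xi+(\mu-\xi)t$ and $ds=dt$, so $Q\,ds+P\,dt=\frac1k\,\frac{\mu-\xi}{\xi-\lambda}\,t^{k}h(\xi+(\mu-\xi)t)\,dt$; integrating along this edge from $(1,1)$ to $(0,0)$ gives $-\frac1k\,\frac{\mu-\xi}{\xi-\lambda}\int_0^1 t^{k}h(\xi+(\mu-\xi)t)\,dt$, the second term. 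Summing the three contributions yields \eqref{eq:skh} for $\kappa=1$, and the reduction above finishes the proof.

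Note that the triangle $D$ is nondegenerate in the $(s,t)$-plane whatever the positions of $\lambda,\xi,\mu$, so only the hypothesis $\lambda\neq\xi$ is used and no separate treatment of the cases $\lambda=\mu$ or $\xi=\mu$ is needed (if desired, these can also be dispatched directly by reversing the order of integration, as at the end of the proof of Lemma \ref{thm:tmh}). I expect the only mildly delicate points to be the bookkeeping in the rescaling step and keeping track of the orientation of $\partial D$ in the line integrals; the choice of $P$ and $Q$, though it is the heart of the matter, is essentially forced by the requirement that the $h'(u)$-terms cancel.
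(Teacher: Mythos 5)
Your proof is correct and takes essentially the same route as the paper: the same reduction to $\kappa=1$ and the same application of Green's theorem over the triangle $\{0\le s\le t\le 1\}$, with your $Q(s,t)=\frac{\mu-\lambda}{k(\xi-\lambda)}s^{k}h(u)$ and $P(s,t)=-\frac1k s^{k}h(u)$ being exactly the paper's choice of $Q$ and $P$. The only (harmless) deviation is that you absorb $\kappa$ into the polynomial rather than rescaling the points, and you correctly note that, since the division is only by $\xi-\lambda$ and the Green's theorem argument lives in the $(s,t)$-plane, the cases $\mu=\lambda$ and $\mu=\xi$ treated separately in the paper require no special handling.
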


\begin{proof}
We prove \eqref{eq:skh} first for the case $\kappa=1$ and then make the change of variables as in Lemma \ref{thm:tmh}.

First, we assume that all points $\lambda,\xi,\mu$ are distinct.
Note that
\[s^{k-1}h(\xi+(\lambda-\xi)t+(\mu-\lambda)s)=\frac{\partial Q}{\partial t}(s,t)-\frac{\partial P}{\partial s}(s,t),\] where
\begin{align*}
&Q(s,t)=-\frac{1}{k}\left(\frac{\mu-\lambda}{\lambda-\xi}\right)s^k h(\xi+(\lambda-\xi)t+(\mu-\lambda)s),\\
&P(s,t)=-\frac1k s^k h(\xi+(\lambda-\xi)t+(\mu-\lambda)s).
\end{align*}
By Green's theorem we obtain
\begin{align*}
&\int_0^1\int_0^t s^{k-1}h(\xi+(\lambda-\xi)t+(\mu-\lambda)s)\,ds\,dt\\
&\quad=\frac1k\left(\frac{\mu-\lambda}{\lambda-\xi}\right)\int_0^1 t^k h(\xi+(\mu-\xi)t)\,dt
+\frac1k\int_0^1 t^k h(\xi+(\mu-\xi)t)\,dt\\
&\quad\quad-\frac1k\left(\frac{\mu-\lambda}{\lambda-\xi}\right)\int_0^1 s^k h(\lambda+(\mu-\lambda)s)\,ds,
\end{align*}
which equals \eqref{eq:skh}.

The case $\mu=\lambda$ follows upon evaluating the inner integral on the left hand side of \eqref{eq:skh}. In the case $\mu=\xi$, the formula can be obtained by substitution and change of the order of integration similarly to how it was done in Lemma \ref{thm:tmh}.
\end{proof}


\begin{lemma}
\label{thm:tmkh}
If $n\geq 2$ and $m,k\in\N$, then

(i) for $\lambda_0\neq\lambda_1$,
\begin{align*}
&\phi_{n,h,m-1,0}(\lambda_0,\lambda_1,\lambda_2,\dots,\lambda_n)\\
&\;=\frac{1}{m}\left(1+\frac{\lambda_2-\lambda_1}{\lambda_1-\lambda_0}\right)
\big(\phi_{n-1,h,m,0}(\lambda_0,\lambda_2,\dots,\lambda_n)
-\phi_{n-1,h,0,m}(\lambda_0,\lambda_2,\dots,\lambda_n)\big)\\
&\quad +\frac{1}{m}\frac{\lambda_2-\lambda_1}{\lambda_1-\lambda_0}
\big(\phi_{n-1,h,0,m}(\lambda_1,\lambda_2,\dots,\lambda_n)
-\phi_{n-1,h,m,0}(\lambda_1,\lambda_2,\dots,\lambda_n)\big),
\end{align*}
(ii) for $\lambda_1\neq\lambda_2$,
\begin{align*}
&\phi_{n,h,0,k-1}(\lambda_0,\lambda_1,\lambda_2,\dots,\lambda_n)\\
&\;=
\frac{1}{k}\left(1+\frac{\lambda_1-\lambda_0}{\lambda_2-\lambda_1}\right)
\phi_{n-1,h,0,k}(\lambda_0,\lambda_2,\lambda_3,\dots,\lambda_n)\\
&\quad-\frac{1}{k}\frac{\lambda_1-\lambda_0}{\lambda_2-\lambda_1}
\phi_{n-1,h,0,k}(\lambda_0,\lambda_1,\lambda_3,\dots,\lambda_n).
\end{align*}
(iii) for $\lambda_0=\lambda_1=\dots=\lambda_n$,
\[\phi_{n,h,m-1,k-1}(\lambda_0,\dots,\lambda_0)=c_{n,m,k}h(\lambda_0).
\]
\end{lemma}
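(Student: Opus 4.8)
The plan is to prove the three identities of Lemma~\ref{thm:tmkh} by reducing the $(n+1)$-variable integral $\phi_{n,h,m,k}$ to $n$-variable integrals of the same type, essentially by performing the innermost double integration (in the variables $s$ and $t$) and recognizing the result via Lemmas~\ref{thm:tmh} and~\ref{thm:tkh}. Recall from \eqref{phinhmk} that the innermost part of $\phi_{n,h,m-1,0}$ is $\int_0^{t_3}\int_0^t t^{m-1}h\big(\lambda_n+\dots+(\lambda_1-\lambda_2)t+(\lambda_0-\lambda_1)s\big)\,ds\,dt$; the outer iterated integral over $t_n,\dots,t_3$ merely averages over the remaining simplex and plays no role in the algebra. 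So for (i) I would fix the outer variables, set $\kappa=t_3$, and observe that the inner double integral has precisely the form appearing on the left-hand side of \eqref{eq:tmh} in Lemma~\ref{thm:tmh}, with the identifications $\xi\mapsto$ (the base point $\lambda_n+(\lambda_{n-1}-\lambda_n)t_n+\dots+(\lambda_2-\lambda_3)t_3$ viewed after factoring out $\kappa$), $\mu\mapsto\lambda_1$, $\lambda\mapsto\lambda_0$ — matching the hypothesis $\lambda_0\neq\lambda_1$. Applying Lemma~\ref{thm:tmh} rewrites this inner double integral as a linear combination (with coefficients $\tfrac1m\tfrac{\xi-\lambda}{\mu-\lambda}=\tfrac1m\cdot\tfrac{\lambda_2-\lambda_1}{\lambda_1-\lambda_0}$ after tracking signs, plus the complementary coefficient) of single integrals of the shape $\int_0^\kappa(\kappa^m-t^m)h(\dots)\,dt$, i.e.\ a difference $\int_0^\kappa\kappa^m h - \int_0^\kappa t^m h$. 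Reinserting into the outer simplex integral, the term $\int \kappa^m h$ collapses (after the change of variable $t\mapsto\kappa t$) into $\phi_{n-1,h,m,0}$ evaluated at the reduced tuple $(\lambda_0,\lambda_2,\dots,\lambda_n)$ (respectively $(\lambda_1,\lambda_2,\dots,\lambda_n)$), while $\int t^m h$ becomes $\phi_{n-1,h,0,m}$ at the same reduced tuple; combining the two scalar coefficients into $\tfrac1m(1+\tfrac{\lambda_2-\lambda_1}{\lambda_1-\lambda_0})$ and $\tfrac1m\tfrac{\lambda_2-\lambda_1}{\lambda_1-\lambda_0}$ yields exactly the claimed formula.

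For (ii) the argument is parallel but uses Lemma~\ref{thm:tkh} instead: the inner double integral of $\phi_{n,h,0,k-1}$ is $\int_0^\kappa\int_0^t s^{k-1}h(\dots)\,ds\,dt$, which is the left-hand side of \eqref{eq:skh} with $\lambda\mapsto\lambda_1$, $\xi\mapsto$ the base point, $\mu\mapsto\lambda_0$, so the needed non-degeneracy is $\lambda_1\neq\lambda_2$ (after translating back through the $t$-variable shift; I will be careful to check which pair of the original points becomes ``$\lambda\neq\xi$'' under the substitutions). Lemma~\ref{thm:tkh} then produces two single integrals $\int_0^\kappa t^k h(\kappa\lambda+(\mu-\lambda)t)$ and $\int_0^\kappa t^k h(\kappa\xi+(\mu-\xi)t)$ with coefficients $\tfrac1k\tfrac{\mu-\lambda}{\xi-\lambda}$ and $-\tfrac1k\tfrac{\mu-\xi}{\xi-\lambda}$; after the rescaling $t\mapsto\kappa t$ and reinsertion into the outer simplex, these become $\phi_{n-1,h,0,k}$ at $(\lambda_0,\lambda_1,\lambda_3,\dots,\lambda_n)$ and at $(\lambda_0,\lambda_2,\dots,\lambda_n)$ respectively, and simplifying $\tfrac{\mu-\xi}{\xi-\lambda}=1+\tfrac{\mu-\lambda}{\xi-\lambda}$ (up to the translation bookkeeping) gives the stated coefficients $\tfrac1k(1+\tfrac{\lambda_1-\lambda_0}{\lambda_2-\lambda_1})$ and $-\tfrac1k\tfrac{\lambda_1-\lambda_0}{\lambda_2-\lambda_1}$. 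Part (iii) is immediate: when all $\lambda_j=\lambda_0$, the argument of $h$ in \eqref{phinhmk} is constantly $\lambda_0$, so $\phi_{n,h,m-1,k-1}(\lambda_0,\dots,\lambda_0)=h(\lambda_0)\int_0^1\int_0^{t_n}\!\!\dots\int_0^t t^{m-1}s^{k-1}\,ds\,dt\,dt_3\cdots dt_n$, and this iterated integral over a fixed simplex is a positive constant $c_{n,m,k}$ depending only on $n,m,k$.

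The main obstacle — and the only place requiring genuine care rather than bookkeeping — is reconciling the two slightly different normalizations: Lemmas~\ref{thm:tmh} and~\ref{thm:tkh} are stated with a parameter $\kappa$ and with the ``base point'' written as $\kappa\xi$, $\kappa\lambda$, etc., whereas in \eqref{phinhmk} the base point $\lambda_n+(\lambda_{n-1}-\lambda_n)t_n+\dots+(\lambda_2-\lambda_3)t_3$ is an affine combination of several $\lambda_j$'s that does not obviously factor as $\kappa\cdot(\text{point on }\T)$. The resolution is that Lemmas~\ref{thm:tmh} and~\ref{thm:tkh}, although \emph{stated} for points on $\T$, are proved via Green's theorem and the Cauchy integral theorem, which are purely local analytic identities valid for \emph{any} affine base point in $\C$ (indeed their own proofs reduce to $\kappa=1$ and then treat arbitrary $\lambda,\xi,\mu\in\overline{\D}$ with equal moduli, but the Green's-theorem computation never actually uses $|\lambda|=|\xi|=|\mu|$); so I would first record a one-line remark that the identities \eqref{eq:tmh} and \eqref{eq:skh} hold verbatim with $\kappa\xi$ replaced by an arbitrary point $\zeta\in\C$ and $\xi,\lambda,\mu$ interpreted as the displacement vectors, and then the substitutions go through cleanly. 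The remaining work is the careful sign and index tracking through the nested substitution $x=t-s$ (used already inside Lemma~\ref{thm:tmh}) and the rescaling $t\mapsto\kappa t$, which I will carry out monomial-by-monomial in $h$ since all operations are linear in $h$.
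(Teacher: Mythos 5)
Your proposal is correct and is essentially the paper's own argument: part (iii) by direct evaluation of the iterated integral, and parts (i)--(ii) by freezing the outer integration variables and applying Lemmas \ref{thm:tmh} and \ref{thm:tkh} to the innermost double integral with $\kappa=t_3$ --- the paper merely packages the same reduction to the case $n=2$ through simplex coordinates with $\kappa=1-\sum_{j\ge 3}s_j$ before relabelling the result back into the $\phi_{n-1,h,\cdot,\cdot}$ form. Your remark that \eqref{eq:tmh} and \eqref{eq:skh} remain valid when the base point $\kappa\xi$ is replaced by an arbitrary point of $\C$ (equivalently, apply the stated lemmas to the translated polynomial $h(\cdot+c)$) is precisely the step the paper uses tacitly, since the argument of $h$ in its inner integral also carries the frozen outer-variable contribution.
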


\begin{proof}
The formula in (iii) is obtained by evaluating the 
integrals in \eqref{phinhmk}.
For $n=2$, (i) and (ii) are the assertions of Lemmas \ref{thm:tmh} and \ref{thm:tkh}, respectively. The case $n>2$ is proved by reduction to the case $n=2$. We prove (i); (ii) can be proved almost verbatim.

By making the substitution
\[s=s_0,\, t=s_0+s_1,\, t_3=\sum_{j=0}^2 s_j,\, \dots,\, t_n=\sum_{j=0}^{n-1} s_j,\, 1=\sum_{j=0}^n s_j\] in the integral in \eqref{phinhmk}, we obtain
\begin{align*}
&\phi_{n,h,m-1,0}(\lambda_1,\lambda_0,\lambda_2,\dots,\lambda_n)\\
&\quad=\int_{S_n^1}(s_0+s_1)^{m-1}
h(s_0\lambda_0+s_1\lambda_1+s_2\lambda_2+s_3\lambda_3+\dots+s_n\lambda_n)\,d\sigma_n,
\end{align*}
where the simplex
\[S_n^\kappa=\bigg\{(s_0,\dots,s_n)\in\R^{n+1}: \sum_{j=0}^n s_j=\kappa,\, s_j\geq 0,\, 0\leq j\leq n\bigg\}\] is equipped with the Lebesgue surface measure $d\sigma_n$ defined by
\[\int_{S_n^\kappa}\phi(s_0,\dots,s_n)\,d\sigma_n=
\int_{R_n^\kappa}\phi\bigg(s_0,\dots,s_{n-1},\kappa-\sum_{j=0}^{n-1} s_j\bigg)\,dv_n\]
for every continuous function $\phi:\R^{n+1}\mapsto\C$, where $dv_n$ is the Lebesgue measure on $\R^n$ and
\[R_n^\kappa=\bigg\{(s_0,\dots,s_n)\in\R^{n+1}: \sum_{j=0}^n s_j\leq\kappa,\, s_j\geq 0,\, 0\leq j\leq n\bigg\}.\] If we set $\kappa=1-\sum_{j=3}^n s_j$, then we can split the integral over $S_n^1$ into the repeated integral $\int_{R_{n-2}^1}\,ds_3\dots ds_n\int_{S_2^\kappa}\,d\sigma_n$. Therefore, if we set $s=s_1$ and $t=s_0+s_1$, we obtain
\begin{align*}
&\phi_{n,h,m-1,0}(\lambda_0,\lambda_1,\lambda_2,\dots,\lambda_n)\\
&\quad=\int_{R_{n-2}^1}\,ds_3\dots ds_n\int_0^\kappa\int_0^t t^{m-1}
h(\kappa\lambda_2+(\lambda_1-\lambda_2)t+(\lambda_0-\lambda_1)s)\,ds\, dt.
\end{align*}
By Lemma \ref{thm:tmh}, the latter equals
\begin{align*}
&\frac1m\left(\frac{\lambda_2-\lambda_0}{\lambda_1-\lambda_0}\right)
\int_{R_{n-2}^1}\,ds_3\dots ds_n\int_0^\kappa (\kappa^m-t^m)
h(\kappa\lambda_2+(\lambda_0-\lambda_2)t)\, dt\\
&\quad+\frac1m\left(\frac{\lambda_1-\lambda_2}{\lambda_1-\lambda_0}\right)
\int_{R_{n-2}^1}\,ds_3\dots ds_n\int_0^\kappa (\kappa^m-t^m)
h(\kappa\lambda_2+(\lambda_1-\lambda_2)t)\, dt.
\end{align*}
Making the substitution $s_0=t$, $s_2=\kappa-t$ in the first and $s_1=t$, $s_2=\kappa-t$ in the second integral, respectively, we obtain
\begin{align*}
&\phi_{n,h,m-1,0}(\lambda_0,\lambda_1,\lambda_2,\dots,\lambda_n)\\
&=\frac1m\left(\frac{\lambda_2-\lambda_0}{\lambda_1-\lambda_0}\right)
\int_{R_{n-2}^1}\,ds_3\dots ds_n\int_{S_1^\kappa} ((s_0+s_2)^m-s_0^m)
h(s_0\lambda_0+s_2\lambda_2+\dots+s_n\lambda_n)\, d\sigma_1\\
&+\frac1m\left(\frac{\lambda_1-\lambda_2}{\lambda_1-\lambda_0}\right)
\int_{R_{n-2}^1}\,ds_3\dots ds_n\int_{S_1^\kappa} ((s_1+s_2)^m-s_1^m)
h(s_1\lambda_1+s_2\lambda_2+\dots+s_n\lambda_n)\, d\sigma_1.
\end{align*}
By substituting $t=s_0$, $t_3=s_0+s_2$, \dots, $t_n=s_0+\sum_{j=2}^{n-1} s_j$, $1=s_0+\sum_{j=2}^{n} s_j$
in the first and
$t=s_1$, $t_3=s_1+s_2$, \dots, $t_n=\sum_{j=1}^{n-1} s_j$, $1=\sum_{j=1}^{n} s_j$ in the second integral, respectively, we obtain
\begin{align*}
&\phi_{n,h,m-1,0}(\lambda_0,\lambda_1,\lambda_2,\dots,\lambda_n)\\
&=\frac1m\left(\frac{\lambda_2-\lambda_0}{\lambda_1-\lambda_0}\right)
\int_0^1\int_0^{t_n}\dots\int_0^{t_3}(t_3^m-t^m)\\
&\quad\quad\quad\quad\quad \cdot h(\lambda_n+(\lambda_{n-1}-\lambda_n)t_n
+\dots+(\lambda_2-\lambda_3)t_3+(\lambda_0-\lambda_2)t)\,dt\,dt_3\dots dt_n\\
&+\frac1m\left(\frac{\lambda_1-\lambda_2}{\lambda_1-\lambda_0}\right)
\int_0^1\int_0^{t_n}\dots\int_0^{t_3}(t_3^m-t^m)\\
&\quad\quad\quad\quad\quad \cdot h(\lambda_n+(\lambda_{n-1}-\lambda_n)t_n
+\dots+(\lambda_2-\lambda_3)t_3+(\lambda_1-\lambda_2)t)\,dt\,dt_3\dots dt_n,
\end{align*}
which completes the proof of (i).
\end{proof}

\begin{theorem}
\label{thm:IndStep}
Let $n,m\in\N$ and let $1<\alpha,\alpha_j<\infty$, for $j=1,\dots,n$, be such that $0<\frac{1}{\alpha}=\frac{1}{\alpha_1}+\cdots +\frac{1}{\alpha_n}<1$. For $\phi_{n,h,p,q}$ as in \eqref{phinhmk} and $(p,q)\in\{(m-1,0),(0,m-1)\}$, the transformation $T_{\phi_{n,h,p,q}}: S^{\alpha_1}\times\dots\times S^{\alpha_n}\mapsto S^\alpha$ is bounded and
\[\big\|T_{\phi_{n,h,p,q}}\big\|\leq c_{n,m,\alpha_1,\dots,\alpha_n}\|h\|_\infty.\]
\end{theorem}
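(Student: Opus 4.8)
The plan is to prove Theorem \ref{thm:IndStep} by induction on the number of variables $n$, reducing order $n$ to order $n-1$ by means of Lemma \ref{thm:tmkh} and then controlling the resulting scalar coefficients with the operator-integral machinery (Lemmas \ref{MOI-algebra}, \ref{thm:g}, \ref{thm:KPSS}) already exercised in the proof of Theorem \ref{thm:IndBase}. For $n=1$ the two-variable reading of \eqref{phinhmk} gives $\phi_{1,h,m-1,0}=\phi_{h,0}$ and $\phi_{1,h,0,m-1}=\phi_{h,m-1}$, so with $S^{\alpha_1}=S^\alpha$ the assertion is precisely Theorem \ref{thm:IndBase}. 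For the inductive step I assume the theorem for order $n-1$ (every $m\in\N$, every admissible tuple of exponents) and treat $(p,q)=(m-1,0)$; the case $(0,m-1)$ is handled the same way with part (ii) of Lemma \ref{thm:tmkh} in place of part (i). Lemma \ref{thm:tmkh}(i) writes $\phi_{n,h,m-1,0}$ as a linear combination, with coefficients assembled from $\tfrac{\lambda_2-\lambda_1}{\lambda_1-\lambda_0}$ and $\tfrac{\lambda_2-\lambda_0}{\lambda_1-\lambda_0}$, of the symbols $\phi_{n-1,h,m,0}$ and $\phi_{n-1,h,0,m}$ evaluated at $(\lambda_0,\lambda_2,\dots,\lambda_n)$ and at $(\lambda_1,\dots,\lambda_n)$, each of which is covered by the inductive hypothesis.

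These coefficients are unbounded on $\T^{n+1}$, so I first split $T_{\phi_{n,h,m-1,0}}$ into finitely many Borel pieces. One piece is the diagonal $\{\lambda_0=\lambda_1\}$: evaluating the innermost integral in \eqref{phinhmk} there collapses the symbol to $\phi_{n-1,h,0,m}(\lambda_0,\lambda_2,\dots,\lambda_n)$, and since $E_{j_0}x_1E_{j_0}$ is the $j_0$-th diagonal block of $x_1$, one checks directly that $T^{\{\lambda_0=\lambda_1\}}_{\phi_{n,h,m-1,0}}(x_1,\dots,x_n)$ equals the product of the diagonal truncation of $x_1$ (a contraction on $S^{\alpha_1}$) with $T_{\phi_{n-1,h,0,m}}(x_2,\dots,x_n)$, which the inductive hypothesis bounds. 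On the complementary pieces I further decompose, in the spirit of the proof of Theorem \ref{thm:IndBase}, through the arcs $Q_0,Q_1,Q_2$ applied to the first spectral coordinates, through the (Schatten-bounded) triangular truncations of $x_1,\dots,x_n$, and through the symmetry $\phi_{n,h,m-1,0}(\lambda_0,\lambda_1,\lambda_2,\dots)=\phi_{n,h,m-1,0}(\lambda_1,\lambda_0,\lambda_2,\dots)$ afforded by \eqref{inteq}, aiming to reach pieces on which the ratios of moduli hidden in the coefficients are dominated by a fixed $\delta$.

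On such a good piece I strip the coefficients of their unimodular phases $\tfrac{\lambda_1-\lambda_0}{|\lambda_1-\lambda_0|}$, $\tfrac{\lambda_2-\lambda_1}{|\lambda_2-\lambda_1|}$, \dots, as in Lemma \ref{thm:FactorDecomp}, and represent the remaining positive modulus-ratio through Lemma \ref{thm:g} as $\int_\R g_\delta(s)\,(\cdot)^{\i s}\,ds$. By Lemma \ref{MOI-algebra} parts \eqref{MOI-A-product} and \eqref{MOI-A-composition}, the restricted multiple operator integral then becomes an $s$-integral against $g_\delta$ of products of $T_{\phi_{n-1,h,m,0}}$, $T_{\phi_{n-1,h,0,m}}$ (or their reflections, of equal norm by Lemma \ref{MOI-algebra} part \eqref{MOI-A-involution}) applied to sub-tuples of the $x_i$, conjugated by the maps $\Upsilon_{\pm\ell}$, $\Gamma_{\pm\ell s}$ of Lemma \ref{thm:KPSS}. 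H\"older for Schatten norms, the inductive bound $\|T_{\phi_{n-1,h,\cdot,\cdot}}\|\le c\,\|h\|_\infty$, Lemma \ref{thm:KPSS} (each $\Gamma$ contributing a factor $1+|s|+|s|^2$), and the finiteness of $\int_\R (1+|s|)^{k}|g_\delta(s)|\,ds$ for every $k$ (Lemma \ref{thm:g}) then produce the bound $c_{n,m,\alpha_1,\dots,\alpha_n}\|h\|_\infty$ on each piece, and summing the finitely many contributions finishes the induction.

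I expect the main obstacle to be the behavior near the diagonal $\{\lambda_0=\lambda_1\}$. There the coefficient $\tfrac{\lambda_2-\lambda_1}{\lambda_1-\lambda_0}$ genuinely blows up on a set of positive measure in $\T^{n+1}$, and it cannot be tamed by differentiating $h$ (Bernstein's inequality would destroy the dependence on $\|h\|_\infty$), so on a punctured neighborhood of $\{\lambda_0=\lambda_1\}$ one is forced either to split off the order-$(n-1)$ main term $\phi_{n-1,h,0,m}(\lambda_0,\lambda_2,\dots,\lambda_n)$ (which, not depending on $\lambda_1$, reduces to the inductive hypothesis) and control the remainder by a further decomposition, or to run an analogue of the self-improving Schatten estimate of Theorem \ref{thm:IndBase} — introducing an auxiliary spectral point via a triangular factorization and closing by interpolation. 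Keeping the H\"older relations among $\alpha_0,\alpha_1,\dots,\alpha_n$ consistent through every such composition is the remaining bookkeeping burden, and this regional analysis, rather than any single estimate, is the technical heart of the argument.
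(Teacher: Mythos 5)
Your overall architecture coincides with the paper's: induct on $n$ with Theorem \ref{thm:IndBase} as base, use Lemma \ref{thm:tmkh} to drop to order $n-1$, and absorb the coefficient $\frac{\lambda_2-\lambda_1}{\lambda_1-\lambda_0}$ by splitting off unimodular phases and writing the modulus ratio through Lemma \ref{thm:g}, then close with Lemmas \ref{MOI-algebra} and \ref{thm:KPSS} and H\"older. Your treatment of the coincidence set $\{\lambda_0=\lambda_1\}$ (the symbol collapses to $\phi_{n-1,h,0,m}(\lambda_0,\lambda_2,\dots,\lambda_n)$ and the restricted integral factors through the diagonal truncation of $x_1$) is correct and is even a little more direct than the paper, which only removes the full diagonal $A_0^{(n)}$ via Lemma \ref{diagonal} and handles partial coincidences implicitly by reordering coordinates.

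The genuine gap is exactly the point you flag at the end and leave unresolved: producing a finite Borel partition on which, after admissible permutations, one has both $\lambda_0\neq\lambda_1$ and $\frac{|\lambda_2-\lambda_1|}{|\lambda_1-\lambda_0|}\leq\delta$ uniformly (in particular uniformly in $N$). Your proposed splitting ``through the arcs $Q_0,Q_1,Q_2$ applied to the first spectral coordinates'' together with the swap symmetry of $\phi_{n,h,m-1,0}$ in $(\lambda_0,\lambda_1)$ does not achieve this: the swap replaces the ratio by $\frac{|\lambda_2-\lambda_0|}{|\lambda_0-\lambda_1|}$, which is no better near $\lambda_0\approx\lambda_1$, and three arcs of length $2\pi/3$ are too coarse, since the chord-monotonicity one needs fails as soon as the relevant arcs do not fit inside a half-circle. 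The paper resolves this with a specific combinatorial scheme you would have to supply: partition $\T$ into $n+2$ arcs $Q^{(n)}_k$ of length $2\pi/(n+2)$ and distinguish (a) the case where some cyclically consecutive pair of arc indices differs by at least $2$, so the denominator is bounded below after a cyclic shift of the variables (Lemma \ref{MOI-algebra}\eqref{MOI-A-duality}); and (b) the case where all consecutive indices differ by at most $1$, in which the union of the arcs lies in a half-circle; there one orders the spectral indices, splits into the $2^{n+1}$ sets $K_{\epsilon,d}$ built from a ``local maximum'' index, and uses cyclic shifts and reversal (Lemma \ref{MOI-algebra}\eqref{MOI-A-involution}, \eqref{MOI-A-duality}) to reduce to the configuration $j_0\leq j_2<j_1$, implemented on the operator side by strictly upper/lower triangular truncations of $x_1,x_2$; on such pieces the ratio is at most $1$ and \eqref{reasoning} applies. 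Without this (or an equivalent) regional analysis the inductive step is not established; note also that no second bootstrapping-plus-interpolation round, which you offer as an alternative, is needed — that device is confined to the base case.
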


First we prove the boundedness of $T_{\phi_{n,h,p,q}}$ on the diagonal set.

\begin{lemma}
\label{diagonal} Assume the notation of Theorem \ref{thm:IndStep} and let
\[A_0^{(n)}:=\big\{(\lambda_0,\dots,\lambda_n):\, \lambda_0=\dots=\lambda_n\in \{z_j\}_{j=0}^{N-1}\big\}.\]
The transformation $T^{A_0^{(n)}}_{\phi_{n, h, m-1,0}}$ is bounded on $S^{\alpha_1}\times\dots\times S^{\alpha_n}$, with the bound as in Theorem \ref{thm:IndStep}.
\end{lemma}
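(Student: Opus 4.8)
The plan is to use the fact that the symbol $\phi_{n,h,m-1,0}$ collapses to a multiple of $h$ once all of its arguments agree. By Lemma \ref{thm:tmkh}(iii), applied with $k=1$, for every $j$ one has $\phi_{n,h,m-1,0}(z_j,\dots,z_j)=c_{n,m}\,h(z_j)$ for a constant $c_{n,m}$ depending only on $n$ and $m$ (equivalently, this is read off directly from \eqref{phinhmk}: on the diagonal the argument of $h$ reduces to $z_j$ and the remaining iterated integral is a universal constant). Inserting this into the definition \eqref{TphiB} of the multiple operator integral restricted to $B=A_0^{(n)}$, for which the only surviving terms are those with $j_0=\dots=j_n=:j$, one gets
\[T^{A_0^{(n)}}_{\phi_{n,h,m-1,0}}(x_1,\dots,x_n)=c_{n,m}\sum_{j=0}^{N-1}h(z_j)\,E_j x_1 E_j x_2\cdots x_n E_j.\]

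Next I would factor this product. Let $\mE(x):=\sum_{j=0}^{N-1}E_j x E_j$ denote the diagonal (block) conditional expectation associated with the family $\{E_j\}$. Using $E_jE_k=\delta_{jk}E_j$ together with the spectral decomposition $h(U_0)=\sum_j h(z_j)E_j$, a direct computation gives
\[\sum_{j=0}^{N-1}h(z_j)\,E_j x_1 E_j x_2\cdots x_n E_j = h(U_0)\,\mE(x_1)\,\mE(x_2)\cdots\mE(x_n),\]
so that $T^{A_0^{(n)}}_{\phi_{n,h,m-1,0}}(x_1,\dots,x_n)=c_{n,m}\,h(U_0)\,\mE(x_1)\cdots\mE(x_n)$.

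It then remains to estimate the Schatten norm of the right-hand side. The map $\mE$ is a contraction on $S^p$ for every $1\le p\le\infty$: averaging the conjugations $x\mapsto\big(\sum_j\epsilon_jE_j\big)x\big(\sum_j\epsilon_jE_j\big)$, each of which is an $S^p$-isometry, over all sign vectors $\epsilon\in\{\pm1\}^N$ reproduces $\mE$, exhibiting it as a convex combination of isometries (alternatively, $\mE$ equals the identity minus the two strictly triangular truncations, which are $S^p$-bounded for $1<p<\infty$, as already used in the proof of Theorem \ref{thm:IndBase}). Since $U_0$ is unitary with spectrum in $\{z_j\}\subset\T$, one has $\|h(U_0)\|=\max_j|h(z_j)|\le\|h\|_{L^\infty(\T)}$. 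Applying the H\"older inequality for Schatten norms with $\frac1\alpha=\frac1{\alpha_1}+\dots+\frac1{\alpha_n}$ yields
\[\big\|T^{A_0^{(n)}}_{\phi_{n,h,m-1,0}}(x_1,\dots,x_n)\big\|_\alpha\le |c_{n,m}|\,\|h(U_0)\|\prod_{i=1}^n\|\mE(x_i)\|_{\alpha_i}\le |c_{n,m}|\,\|h\|_{L^\infty(\T)}\prod_{i=1}^n\|x_i\|_{\alpha_i},\]
which is the bound asserted in Theorem \ref{thm:IndStep}. No step here is a genuine obstacle; the only point worth a remark is the $S^p$-contractivity of the diagonal projection $\mE$, which is handled by the averaging argument above.
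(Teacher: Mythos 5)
Your proof is correct and follows essentially the same route as the paper: the paper likewise factors out the diagonal value of the symbol (reducing to $\phi\equiv 1$, i.e.\ to $h(U_0)$ times the block-diagonal compression) and then realizes $\sum_j E_j x_1 E_j\cdots x_n E_j$ as an average of products of unitary conjugations, finishing with H\"older's inequality. The only cosmetic difference is that you average over Rademacher sign unitaries to get contractivity of the conditional expectation $\mE$ on each $S^{\alpha_i}$, whereas the paper integrates the product $\prod_j U_{t_j}^* x_j U_{t_j}$ over the continuous family $U_t=\sum_j e^{2\pi \i j t}E_j$ and invokes orthogonality of the trigonometric functions.
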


\begin{proof}
We prove a more general result. Let $\phi$ be a bounded Borel function on $\T^{n+1}$. Then, the polylinear operator $$ \Delta_\phi(x_1, \ldots, x_n) := \sum_{j=0}^{N-1}\phi(z_j,z_j,\dots,z_j) E_j
  x_1 E_j x_2 \cdot \ldots \cdot x_n E_j $$ is bounded on $S^{\alpha_1}\times\dots\times S^{\alpha_n}$.

Since \[\Delta_\phi(x_1, \ldots, x_n)=\sum_{j=0}^{N-1}\phi(z_j,z_j,\dots,z_j)E_j\sum_{k=0}^{N-1} E_k
  x_1 E_k x_2 \cdot \ldots \cdot x_n E_k,\] it is enough to prove the boundedness of $\Delta_\phi$ with  $\phi\equiv 1$.

Consider the unitary $$ U_t = \sum_{j=0}^{N-1} e^{2\pi i j t} E_j,\ \ 0 \leq t \leq 1. $$ We now observe
  that $$ \Delta_1(x_1,
  \ldots, x_n) = \int_0^1 \ldots \int_0^1 \left[ \prod_{j = 1}^n
    U^*_{t_j} x_j U_{t_j} \right]\, dt_1 \ldots dt_n $$
due to orthogonality of the trigonometric functions.
Thus, the estimate $$ \left\| \Delta_1(x_1, \ldots x_n )\right\|_\alpha \leq \left\|
    x_1 \right\|_{\alpha_1} \cdot \ldots \cdot \left\| x_n
  \right\|_{\alpha_n} $$ follows.
\end{proof}

\begin{proof}[Proof of Theorem \ref{thm:IndStep}]
The case $n=1$ is proved in Theorem \ref{thm:IndBase}.
We prove the bound for $T_{\phi_{n,h,m-1,0}}$, $n>1$, by induction on $n$; the case of $T_{\phi_{n,h,0,m-1}}$ is completely analogous.

Assume that the transformation $T_{\phi_{n-1, h, k, l}}$, where $(k,l)\in\{(m,0),(0,m)\}$, is bounded on $S^{\alpha_1}\times\dots\times S^{\alpha_n}$ with norm no greater than $c_{n,m,\alpha_1,\dots,\alpha_n}\|h\|_\infty$.
Note that it is enough to show that
\begin{equation*}
    \sup_{x_j \in S^{\alpha_j},\; \|x_j\|_{\alpha_j}\leq 1,\; 0\leq j\leq n}\left| \tr
      \left( x_0 T_{\phi_{n, h, m-1, 0}} \left(x_1,\dots,x_n \right) \right)
    \right| \leq c_{n,m,\alpha_1,\dots,\alpha_n}\|h\|_\infty,
  \end{equation*}
  where $1<\alpha_0<\infty$ is such that $1=\frac{1}{\alpha_0}+\frac{1}{\alpha_1}+\cdots+\frac{1}{\alpha_n}$.

We shall use the boundedness of $T_{\phi_{n-1, h, k, l}}$ and the decomposition of Lemma \ref{thm:tmkh} to prove the boundedness of $T_{\phi_{n, h, m-1, 0}}$. Denote
\[Q^{(n)}_k:=\left\{z\in\T: \arg(z)\in \left[\frac{2\pi k}{n+2},\frac{2\pi (k+1)}{n+2}\right)\right\},
\quad k=0,\dots,n+1,\] and let $N$ from Definition \ref{MOI-def} be divisible by $n+2$. (Here $\arg(z)$ denotes the principal value of the argument of the complex number $z$.)
By additivity of the multiple operator integral, we have
\begin{align}
\label{Tsum}
T_{\phi_{n, h, m-1, 0}}=\sum_{k_0,k_1,\dots,k_n\in \{0,1,\dots,n+1\}}
T_{\phi_{n, h, m-1, 0}}^{Q^{(n)}_{k_0}\times Q^{(n)}_{k_1}\times\dots\times Q^{(n)}_{k_n}},
\end{align}
where the number of summands is, clearly, determined by $n$. We shall estimate separately each of the terms in
\eqref{Tsum}.

The boundedness of $T^{A_0^{(n)}}_{\phi_{n, h, m-1,0}}$ on $S^{\alpha_1}\times\dots\times S^{\alpha_n}$ follows from Lemma \ref{diagonal},
so we shall consider $T_{\phi_{n, h, m-1, 0}}$ only away of the diagonal set $A_0^{(n)}$.

We shall estimate the norm of
\begin{align}
\label{Ttheta}
T_{\phi_{n, h, m-1, 0}}^{Q^{(n)}_{k_0}\times Q^{(n)}_{k_1}\times\dots\times Q^{(n)}_{k_n}},
\end{align} by applying the following method.
\medskip

Similarly to Lemma
\ref{thm:FactorDecomp}, we have
\[\frac{\lambda_2-\lambda_1}{\lambda_1-\lambda_0}=\left(\frac{|\lambda_1-\lambda_0|}{\lambda_1-\lambda_0}\right)
\left(\frac{\lambda_2-\lambda_1}{|\lambda_2-\lambda_1|}\right)\int_R g_\delta(s)\left(\frac{|\lambda_2-\lambda_1|^{\i s}}{|\lambda_1-\lambda_0|^{\i s}}\right)\,ds,\]
whenever $\frac{|\lambda_2-\lambda_1|}{|\lambda_1-\lambda_0|}\leq \delta=\delta_n$. Let
\begin{align*}
&A:=\big\{(\lambda_0,\dots,\lambda_n):\, \lambda_0,\dots,\lambda_n\in \{z_j\}_{j=0}^{N-1},\; \lambda_0\neq\lambda_1\big\},\\
&A^+:=\left\{(\lambda_0,\dots,\lambda_n)\in A:\,
\frac{|\lambda_2-\lambda_1|}{|\lambda_1-\lambda_0|}\leq \delta\right\}.
\end{align*} Let $D$ be a Borel subset of $A^+$. Denote by $D_{j_0,\dots,j_k}$ the projection of $D$ onto the coordinates $j_0,\dots,j_k$.
Denote
\begin{align*}
\psi_{0,s}(\lambda_0,\lambda_1):=
\left(\frac{|\lambda_1-\lambda_0|}{\lambda_1-\lambda_0}\right)|\lambda_1-\lambda_0|^{-\i s},\quad
\psi_{2,s}(\lambda_1,\lambda_2):=
\left(\frac{\lambda_2-\lambda_1}{|\lambda_2-\lambda_1|}\right)|\lambda_2-\lambda_1|^{\i s}.
\end{align*}
By Lemma \ref{thm:tmkh} and Lemma \ref{MOI-algebra} \eqref{MOI-A-product} and \eqref{MOI-A-composition},
\begin{align}
\label{reasoning}
&m T_{\phi_{n, h, m-1, 0}}^{D}(x_1,\dots,x_n)\\ \nonumber
&\;=T_{\phi_{n-1, h, m, 0}}^{D}(x_1,\dots,x_n)-T_{\phi_{n-1, h, 0, m}}^{D}(x_1,\dots,x_n) \displaybreak[2] \\ \nonumber
&\quad+\int_\R T_{\phi_{n-1, h, m, 0}}^{D}\big(T_{\psi_{0,s}\psi_{2,s}}^{D_{0,1,2}}(x_1,x_2),x_3,\dots,x_n\big) g_\delta(s)\,ds
\displaybreak[2] \\ \nonumber
&\quad-\int_\R T_{\phi_{n-1, h, 0, m}}^{D}\big(T_{\psi_{0,s}\psi_{2,s}}^{D_{0,1,2}}(x_1,x_2),x_3,\dots,x_n\big) g_\delta(s)\,ds \displaybreak[2] \\ \nonumber
&\quad+\int_\R T_{\psi_{0,s}}^{D_{0,1}}(x_1) \cdot T_{\phi_{n-1, h, m, 0}}^{D}\big(T_{\psi_{2,s}}^{D_{1,2}}(x_2),x_3,\dots,x_n\big) g_\delta(s)\,ds\\
\nonumber
&\quad-\int_\R T_{\psi_{0,s}}^{D_{0,1}}(x_1) \cdot T_{\phi_{n-1, h, 0, m}}^{D}\big(T_{\psi_{2,s}}^{D_{1,2}}(x_2),x_3,\dots,x_n\big) g_\delta(s)\,ds.
\end{align}
Thus, in order to claim the boundedness of $T_{\phi_{n, h, m-1, 0}}^{D}$, it is enough to prove the boundedness of $T_{\phi_{n-1, h, k, l}}^{D}$, with $(k,l)\in \{(m,0),(0,m)\}$, and the boundedness of $T_{\psi_{0,s}}^{D_{0,1}}$ and $T_{\psi_{2,s}}^{D_{1,2}}$.
\medskip

Now we apply the general method to estimate the norm of \eqref{Ttheta}. There are two principal cases. One is when there exists an index $i\in\{0,1,\dots,n\}$ such that\footnote{Here the increment and decrement of the index $i$ is understood modulo $n$, that is, if $i=n$, then $i+1=0$ and if $i=0$, then $i-1=n$.} $|k_{i+1}-k_i|\geq 2$ (and, hence, $|z-w|\geq c_n>0$ for $z\in Q_{k_{i+1}}^{(n)}$, $w\in Q_{k_i}^{(n)}$) and the other is when $|k_{i+1}-k_i|\leq 1$ for all $i$. In the latter case,\footnote{If $n$ is even, then a typical example is $\begin{cases}k_i=i,& i\leq\frac{n}{2}\\ k_i=n-i+1,& i>\frac{n}{2}.\end{cases}$} there is $a\in(0,\pi]$ such that $\arg(z)\subseteq [a,a+\pi]$ whenever $z\in Q_{k_i}^{(n)}$, for each $i$.
Thus, in this case we have the inequality $|z_{j_1}-z_{j_0}|> |z_{j_2}-z_{j_1}|$ whenever $z_{j_0},z_{j_1},z_{j_2}\in Q^{(n)}_{k_0}\cup Q^{(n)}_{k_1}\cup\dots\cup Q^{(n)}_{k_n}$ and $j_0\leq j_2<j_1$.
\medskip

{\it Case 1:} there exists $i$ such that $|k_{i+1}-k_i|\geq 2$.

As noted above, shifting the variables does not affect the norm of \eqref{Ttheta}, so it is enough to consider the subcase $i=0$.

We apply the reasoning \eqref{reasoning} with
\[D=Q^{(n)}_{k_0}\times Q^{(n)}_{k_1}\times\dots\times Q^{(n)}_{k_n},\quad D_{0,1}=Q^{(n)}_{k_0}\times Q^{(n)}_{k_1},\quad\text{ and}\quad
D_{1,2}=Q^{(n)}_{k_1}\times Q^{(n)}_{k_2}.\] Set $\mathcal{Q}_{k_0}^{(n)}:=E\big(Q_{k_0}^{(n)}\big)$. Then,
the operator
\[T_{\phi_{n-1, h, k, l}}^{D}(x_1,x_2,\dots,x_n)
=T_{\phi_{n-1, h, k, l}}\big(\mathcal{Q}_{k_0}^{(n)}x_1 \mathcal{Q}_{k_1}^{(n)},\mathcal{Q}_{k_1}^{(n)} x_2\mathcal{Q}_{k_2}^{(n)},\dots,\mathcal{Q}_{k_{n-1}}^{(n)}x_n\mathcal{Q}_{k_n}^{(n)}\big)\] is bounded by the induction notations and boundedness of the projections $\mathcal{Q}_{k_i}^{(n)}$, $i=0,\dots,n$; the operators
$T_{\psi_{0,s}}^{D_{0,1}}$ and $T_{\psi_{2,s}}^{D_{1,2}}$ are bounded by Lemma \ref{thm:KPSS} and Lemma
\ref{MOI-algebra} \eqref{MOI-A-composition}; the operator
\[T_{\psi_{0,s}\psi_{2,s}}^{D_{0,1,2}}(x_1,x_2)=
T_{\psi_{0,s}}^{D_{0,1}}(x_1)T_{\psi_{2,s}}^{D_{1,2}}(x_2)\] is bounded by Lemma \ref{MOI-algebra} \eqref{MOI-A-product}. This completes the proof of Case 1.
\medskip

We split the case ``$|k_{i+1}-k_i|\leq 1$ for all $i$" into two subcases below.

{\it Case 2:} $k_0=k_1=\dots=k_n$.

We adjust the argument of Theorem \cite[Theorem 5.3]{PSS} and demonstrate only the case $k_0=0$. Let $\epsilon=(\epsilon_1,\epsilon_2,\dots,\epsilon_n)\in\{-1,1\}^n$ and define
\begin{align*}
K_\epsilon:=\big\{(z_{j_0},\dots,z_{j_n})&\in Q_0^{(n)}\times\dots\times Q_0^{(n)}:\; \\
& j_{i-1}\leq j_i\; \text{ if }\; \epsilon_i=1;\; j_{i-1}> j_i\; \text{ if }\; \epsilon_i=-1,\;
1 \leq i\leq n\big\}.
\end{align*}
The space $Z:=\big\{(z_{j_0},\dots,z_{j_n})\in Q_0^{(n)}\times\dots\times Q_0^{(n)}\big\}\setminus A_0$
splits into the disjoint union of $2^n$ sets $K_\epsilon$, where $\epsilon\in\{-1,1\}^n$. There is an index $i_\epsilon$ such that $j_{i_\epsilon-1}\leq j_{i_\epsilon}$ and $j_{i_\epsilon}> j_{i_\epsilon+1}$. By fixing $j_\epsilon$, we further split $K_\epsilon$ into subsets $K_{\epsilon,d}$, $d=0,1$, where
\begin{align*}
&K_{\epsilon,0}:=\big\{(z_{j_0},\dots,z_{j_n})\in K_\epsilon:\; j_{i_\epsilon-1}\leq j_{i_\epsilon+1}\big\},\\
&K_{\epsilon,1}:=\big\{(z_{j_0},\dots,z_{j_n})\in K_\epsilon:\; j_{i_\epsilon-1}> j_{i_\epsilon+1}\big\}.
\end{align*}
The space $Z$ splits into the disjoint union of $2^{n+1}$ sets $K_{\epsilon,i}$ and, hence,
\[T_{\phi_{n,h,m-1,0}}^Z=\sum_\epsilon\sum_{d=0,1}T_{\phi_{n,h,m-1,0}}^{K_{\epsilon,d}}.\] For fixed $\epsilon$ and $i_\epsilon$,
\[(z_{j_0},\dots,z_{j_n})\in K_{\epsilon,d}\Rightarrow
\begin{cases}
j_{i_\epsilon-1}\leq j_{i_\epsilon+1}<j_{i_\epsilon},\; &\text{ if }\; d=0,\\
j_{i_\epsilon+1}< j_{i_\epsilon-1}\leq j_{i_\epsilon},\; &\text{ if }\; d=1.
\end{cases}\]
By shifting and also reversing if $i=1$ the enumeration of the variables (as in Lemma \ref{MOI-algebra}  \eqref{MOI-A-involution} and \eqref{MOI-A-duality}), we may assume that $j_0\leq j_2<j_1$. We apply the reasoning \eqref{reasoning} with
\[D=\big\{(z_{j_0},\dots,z_{j_n})\in Q_0^{(n)}\times\dots\times Q_0^{(n)}:\;
j_0\leq j_2<j_1\big\},\] which equals $K_{\epsilon,0}$ with $i_\epsilon=1$.
Let $x_1^{UT}$ denote the strictly upper triangular truncation of $x_1$ and $x_2^{LT}$ the strictly lower triangular truncation of $x_2$ with respect to the family $\{E_j\}_{j=0}^{N/(n+2)-1}$ (defined in Definition \ref{MOI-def}).
Then,
\[T_{\phi_{n-1, h, k, l}}^{D}(x_1,x_2,\dots,x_n)
=T_{\phi_{n-1, h, k, l}}\big(\mathcal{Q}_0^{(n)}x_1^{UT} \mathcal{Q}_0^{(n)},\mathcal{Q}_0^{(n)} x_2^{LT}\mathcal{Q}_0^{(n)},\dots,\mathcal{Q}_0^{(n)}x_n\mathcal{Q}_0^{(n)}\big),\]
\[T_{\psi_{0,s}}^{D_{0,1}}(x_1)=T_{\psi_{0,s}}(\mathcal{Q}_0^{(n)}x_1^{UT}\mathcal{Q}_0^{(n)}),\quad T_{\psi_{2,s}}^{D_{1,2}}(x_2)=T_{\psi_{2,s}}(\mathcal{Q}_0^{(n)}x_1^{LT}\mathcal{Q}_0^{(n)}),\] and
\[T_{\psi_{0,s}\psi_{2,s}}^{D_{0,1,2}}(x_1,x_2)=
T_{\psi_{0,s}}^{D_{0,1}}(x_1)T_{\psi_{2,s}}^{D_{1,2}}(x_2)\]
are bounded by Lemmas \ref{thm:KPSS} and \ref{MOI-algebra} and the boundedness of the triangular truncation.
\medskip

{\it Case 3:} $|k_{i+1}-k_i|\leq 1$ for all $i$ and $|k_{j+1}-k_j|=1$ for some $j$.

In this case, there exists an index $j$ such that
the sets $Q_{k_{j-1}}^{(n)}=Q_{k_{j+1}}^{(n)}$ and $Q_{k_j}^{(n)}$ are disjoint. By Lemma \ref{MOI-algebra} \eqref{MOI-A-duality}, it is enough to consider the subcase $j=1$.
Let $(z_{j_0},z_{j_1},z_{j_2})\in Q_{k_0}^{(n)}\times Q_{k_1}^{(n)}\times Q_{k_2}^{(n)}$. If $k_0=k_2<k_1$, then we have $j_0\leq j_2<j_1$ and, hence, $\frac{|z_{j_2}-z_{j_1}|}{|z_{j_1}-z_{j_0}|}< 1$ (or $j_2\leq j_0<j_1$ and, hence,
$\frac{|z_{j_1}-z_{j_0}|}{|z_{j_2}-z_{j_1}|}< 1$); if $k_0=k_2>k_1$, then we have $j_1< j_2\leq j_0$
(or $j_1< j_0\leq j_2$).
Since the point $(\lambda_2,\lambda_1,\lambda_0,\lambda_n,\dots,\lambda_3)$ can be obtained from $(\lambda_0,\lambda_1,\lambda_2,\lambda_3,\dots,\lambda_n)$ by shifting and reversing
\begin{align*}
(\lambda_0,\lambda_1,\lambda_2,\lambda_3\dots,\lambda_n)
&\mapsto (\lambda_1,\lambda_2,\lambda_3,\dots,\lambda_n,\lambda_0)\mapsto
(\lambda_0,\lambda_n,\dots,\lambda_3,\lambda_2,\lambda_1)\\
&\mapsto (\lambda_2,\lambda_1,\lambda_0,\lambda_n,\dots,\lambda_3),
\end{align*} in view of Lemma \ref{MOI-algebra} \eqref{MOI-A-involution} and \eqref{MOI-A-duality}, it is enough to consider only the subset of $Q_{k_0}^{(n)}\times Q_{k_1}^{(n)}\times Q_{k_2}^{(n)}$ for which  $\frac{|z_{j_2}-z_{j_1}|}{|z_{j_1}-z_{j_0}|}< 1$. For this subset, we apply the reasoning \eqref{reasoning} similarly to how it was done in Case 2, where $x_1,x_2$ are replaced with their upper or lower triangular truncations (depending on whether we have $j_0\leq j_2<j_1$ or $j_1<j_2\leq j_0$).
\end{proof}

\begin{proof}[Proof of Theorem \ref{thm:MainEst}]
(i) On the strength of Lemmas \ref{tou} and \ref{tofinite}, we assume that $U_0$ is unitary with spectrum contained in $\{z_j\}_{j=0}^{N-1}$ and that $t_0=0$.
By Lemma \ref{der=dd},
\[\frac{d^{n}}{dt^{n}}\big(f(U_0+tV)\big)\big|_{t=0}=n!T_{f^{[n]}}(\underbrace{V, \ldots,
        V}_{n\text{ \rm times}}).\]
Hence, by Theorem \ref{thm:IndStep} applied to $T_{\phi_{n,f^{(n)},0,0}}$ and $\alpha_j=n$, $j=1,\dots,n-1$, we have \eqref{eq:MainEst}.

(ii) Applying Lemma \ref{thm:TrDer}, H\"{o}lder's inequality, and (i) 
completes the proof.
\end{proof}

\section{Proof of Theorem \ref{existence}}
\label{sec3}

We need the following formula computing the norm on the factor space $L^1(\T)/H^1(\T)$, where $H^1$ is the Hardy space $\{f\in L^1(\T): \hat f(n)=0,\text{ for all } n<0\}$.

\begin{lemma}(\cite[Lemma 5]{PS-circle})
\label{lemmaPS}
For every $f\in L^1(\T)$, the equality
\[\|f\|_{L^1/H^1}=\sup_{\|g\|_{H^\infty}\leq 1,\; g\in\mathcal{P}}\left|\int_\T g(z)f(z)\,dz\right|\] holds, where $\mathcal{P}$ is the set of all complex polynomials.
\end{lemma}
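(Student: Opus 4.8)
The statement to prove is Lemma~\ref{lemmaPS}, the duality formula $\|f\|_{L^1/H^1}=\sup_{\|g\|_{H^\infty}\leq 1,\,g\in\mathcal P}\left|\int_\T g(z)f(z)\,dz\right|$. Since the paper cites this as \cite[Lemma 5]{PS-circle}, I would treat it as a known classical fact and give a proof via the standard $L^1/H^1$–$H^\infty$ duality together with a polynomial approximation argument.

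\smallskip

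The plan is as follows. First I would recall the classical duality $(L^1(\T)/H^1(\T))^* \cong H^\infty(\T)$ via the pairing $(f+H^1, g)\mapsto \int_\T g(z)f(z)\,dz$; this is well defined because for $g\in H^\infty$ and $h\in H^1$ one has $\int_\T g h\,dz = 0$ (the Fourier coefficients line up so that only nonnegative frequencies of $g$ meet nonnegative frequencies of $h$, and the constant term integrates out after accounting for the $dz$ normalization — more precisely $\int_\T z^k z^\ell \, dz = 0$ unless $k+\ell=-1$, and for $g,h$ both in Hardy spaces all exponents are $\ge 0$). Consequently
\[
\|f\|_{L^1/H^1} = \sup_{\|g\|_{H^\infty}\leq 1}\left|\int_\T g(z) f(z)\,dz\right|,
\]
where the supremum is over all of the unit ball of $H^\infty$. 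This is the Hahn--Banach / duality half and is where the main content sits; I would cite a standard reference (e.g. Hoffman, or Garnett's \emph{Bounded Analytic Functions}) rather than reprove it.

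\smallskip

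The second step is to replace the supremum over the whole unit ball of $H^\infty$ by the supremum over polynomials $g\in\mathcal P$ with $\|g\|_{H^\infty}\leq 1$. One inclusion is trivial since $\mathcal P\cap\{\|g\|_\infty\le 1\}\subseteq \{g\in H^\infty:\|g\|_\infty\le1\}$, so the polynomial supremum is $\le$ the full one. For the reverse inequality, fix $g\in H^\infty$ with $\|g\|_\infty\leq 1$ and $r\in(0,1)$, and consider the dilates $g_r(z):=g(rz)$, which are analytic on a neighborhood of $\overline{\mathbb D}$, satisfy $\|g_r\|_\infty\leq 1$, and whose Taylor partial sums $g_{r,M}(z)=\sum_{k=0}^M \hat g(k) r^k z^k$ converge uniformly on $\T$ to $g_r$ as $M\to\infty$ (a convergent power series on a neighborhood of the closed disk). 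Since $\|g_{r,M}\|_\infty\to\|g_r\|_\infty\leq 1$, after multiplying by $1/\max(1,\|g_{r,M}\|_\infty)$ we obtain polynomials in the closed unit ball; and since $g_r\to g$ in, say, $L^2(\T)$ as $r\to1$ (dominated convergence on Fourier coefficients, or Abel means of an $H^\infty\subset H^2$ function), for $f\in L^1$ one gets $\int_\T g_r f\,dz\to\int_\T g f\,dz$ provided one first approximates $f$ in $L^1$ by a bounded function or uses that $g_r\to g$ a.e.\ boundedly (Fatou's theorem on nontangential/radial limits). Either route gives $\left|\int_\T g f\,dz\right| = \lim_{r\to1}\lim_{M\to\infty}\left|\int_\T g_{r,M} f\,dz\right| \leq \sup_{\|p\|_\infty\le 1,\,p\in\mathcal P}\left|\int_\T p f\,dz\right|$. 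Taking the sup over $g$ yields the claimed equality.

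\smallskip

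The main obstacle is a soft analytic point rather than a hard one: justifying the passage to the limit $\int_\T g_{r,M} f\,dz \to \int_\T g f\,dz$ for merely integrable $f$ while keeping the approximating polynomials in the \emph{closed} unit ball of $H^\infty$. The cleanest fix is to use that radial dilates $g_r$ of an $H^\infty$ function converge to $g$ boundedly almost everywhere (so dominated convergence handles the $f\in L^1$ pairing), and that the normalization constants $\max(1,\|g_{r,M}\|_\infty)$ tend to $1$; both are standard facts about Hardy spaces. I would therefore present the argument at the level of these two ingredients, citing \cite{PS-circle} for the precise statement and a standard text for the duality $(L^1/H^1)^*=H^\infty$, and note that no new ideas beyond classical Hardy space theory are required.
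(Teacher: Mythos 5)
Your proposal is correct. Note that the paper gives no proof of this lemma at all---it is quoted directly from \cite[Lemma 5]{PS-circle}---so there is nothing internal to compare against; your argument, namely the isometric duality $(L^1(\T)/H^1(\T))^*\cong H^\infty(\T)$ under the $dz$-pairing (using that $\int_\T z^k z^\ell\,dz=0$ unless $k+\ell=-1$, so $H^\infty$ is exactly the annihilator of $H^1$) followed by approximation of a unit-ball $H^\infty$ function by polynomials in the unit ball, is the standard route underlying that citation and is sound, including your fix via bounded a.e.\ convergence of the radial dilates plus dominated convergence against $f\in L^1(\T)$. One streamlining: instead of the two-step limit over dilates $g_r$ and their Taylor sections with a renormalization, you can use the Fej\'er means $\sigma_N g$, which are analytic polynomials, satisfy $\|\sigma_N g\|_\infty\le\|g\|_\infty\le 1$ by positivity of the Fej\'er kernel, and converge to $g$ a.e.\ and boundedly, so the reverse inequality follows in a single passage to the limit.
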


The next theorem extends the result of \cite[Theorem 6]{PS-circle} for $V\in S^2$ to perturbations in  $S^n$, $n\geq 3$. The proof of this general result relies on the estimate \eqref{eq:TrEst} of Theorem \ref{thm:MainEst} and on Lemma \ref{thm:TrDer}.

\begin{theorem}
\label{aux}
Assume Notations \ref{hyp}. Let $W\in S^n$. There is a function $\eta_n\in L^1(\T)$ depending on $n,U_0,V,W$ such that
\begin{align}
\label{avr-func}
\frac{1}{(n-1)!}\int_0^1(1-t)^{n-1}\,\tr\left(\frac{d^{n-1}}{dt^{n-1}}f'(U_0+tV)W\right)\,dt
=\int_\T f^{(n)}(z)\eta_n(z)\,dz,
\end{align} for every polynomial $f$. The class of all such functions $\eta_n$ corresponds to a unique element $[\eta_n]\in L^1/H^1$ satisfying
\begin{align}
\label{etabound}
\|[\eta_n]\|_{L^1/H^1}\leq c_n\|V\|_n^{n-1}\|W\|_n.
\end{align}
\end{theorem}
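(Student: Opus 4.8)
The plan is to use the functional-analytic duality between $L^1(\T)/H^1(\T)$ and its description in Lemma \ref{lemmaPS}: the bracket $[\eta_n]$ is determined by declaring the left-hand side of \eqref{avr-func} to be the pairing of $[\eta_n]$ with $f^{(n)}$, once we check that this is well defined on the space of polynomials spanned by the $f^{(n)}$ modulo constants. First I would observe that, by Lemma \ref{thm:TrDer} applied iteratively (or directly), one has
\[
\frac{1}{(n-1)!}\int_0^1(1-t)^{n-1}\,\tr\!\left(\frac{d^{n-1}}{dt^{n-1}}f'(U_t)W\right)dt,
\]
and it suffices to define a linear functional $L$ on the space $\mathcal{P}$ of complex polynomials by
\[
L(g):=\frac{1}{(n-1)!}\int_0^1(1-t)^{n-1}\,\tr\!\left(\frac{d^{n-1}}{dt^{n-1}}G(U_t)W\right)dt,
\]
where $G$ is a polynomial with $G'=g$; the value is independent of the choice of antiderivative because changing $G$ by a constant $c$ adds $\frac{c}{(n-1)!}\int_0^1(1-t)^{n-1}\tr\big(\frac{d^{n-1}}{dt^{n-1}}(1)\,W\big)\,dt=0$ since the $(n-1)$st derivative of a constant vanishes for $n\geq 2$ (and for $n=1$ this reduces to the known case). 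Thus $L$ is a well-defined linear functional on $\mathcal{P}$.

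Next I would estimate $|L(g)|$ in terms of $\|g\|_{H^\infty}$. Writing $g=G'$ and using that $\frac{d^{n-1}}{dt^{n-1}}G(U_t)$ can be rewritten — after one more application of the identity behind Lemma \ref{thm:TrDer} in reverse, or directly via cyclicity — so that the trace $\tr\big(\frac{d^{n-1}}{dt^{n-1}}G(U_t)\,W\big)$ is controlled like $\tr\big(\frac{d^{n}}{dt^{n}}\tilde G(U_t)\big)$ with one of the $n$ copies of $V$ replaced by $W$. Applying the estimate \eqref{eq:TrEst} of Theorem \ref{thm:MainEst} in the form adapted to mixed perturbations $(V,\dots,V,W)$ — which follows from part (i) of Theorem \ref{thm:MainEst} together with Hölder's inequality exactly as part (ii) was deduced — gives
\[
\left|\tr\!\left(\frac{d^{n-1}}{dt^{n-1}}G(U_t)W\right)\right|\leq c_n\,\|g\|_{L^\infty(\T)}\,\|V\|_n^{n-1}\,\|W\|_n
\]
uniformly in $t\in[0,1]$. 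Since $\frac{1}{(n-1)!}\int_0^1(1-t)^{n-1}\,dt=\frac{1}{n!}$, absorbing the constant we get
\[
|L(g)|\leq c_n\,\|g\|_{L^\infty(\T)}\,\|V\|_n^{n-1}\,\|W\|_n\leq c_n\,\|g\|_{H^\infty}\,\|V\|_n^{n-1}\,\|W\|_n
\]
for all $g\in\mathcal{P}$.

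Finally, by Lemma \ref{lemmaPS}, a bounded linear functional on $(\mathcal{P},\|\cdot\|_{H^\infty})$ of norm at most $c_n\|V\|_n^{n-1}\|W\|_n$ is represented by a unique element $[\eta_n]\in L^1(\T)/H^1(\T)$ with $\|[\eta_n]\|_{L^1/H^1}\leq c_n\|V\|_n^{n-1}\|W\|_n$, and any representative $\eta_n\in L^1(\T)$ of $[\eta_n]$ satisfies $\int_\T g(z)\eta_n(z)\,dz=L(g)$ for every polynomial $g$; taking $g=f^{(n)}$ yields \eqref{avr-func}, and the indeterminacy of $\eta_n$ modulo $H^1$ matches exactly the statement that the class of all such functions is the single coset $[\eta_n]$. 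The main obstacle I anticipate is the bookkeeping in the second paragraph: justifying that the trace with the extra factor $W$ inserted is genuinely covered by (a mixed-perturbation version of) Theorem \ref{thm:MainEst}, i.e. that replacing one $V$ by $W$ in the multilinear expression from Lemma \ref{thm:PolDer} preserves the $S^n$-type estimate with the stated norm — this is where Hölder's inequality with exponents $\underbrace{n,\dots,n}_{n}$ must be applied carefully, together with the boundedness of the relevant multiple operator integral $T_{f^{[n]}}$ from Theorem \ref{thm:IndStep} on $S^n\times\cdots\times S^n$, exactly as in the deduction of \eqref{eq:TrEst}.
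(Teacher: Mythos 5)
Your first half is essentially sound and follows the paper's Step 1: the bound you want is obtained simply by writing $\bigl|\tr\bigl(\tfrac{d^{n-1}}{dt^{n-1}}f'(U_t)\,W\bigr)\bigr|\le\bigl\|\tfrac{d^{n-1}}{dt^{n-1}}f'(U_t)\bigr\|_{\frac{n}{n-1}}\|W\|_n$ and applying Theorem \ref{thm:MainEst}(i) to the $(n-1)$st derivative of $f'$ with $\alpha=n>n-1$; no ``mixed-perturbation'' version of the theorem is needed, so the bookkeeping you worry about at the end is a non-issue. (Two small slips: your $G$ should satisfy $G^{(n-1)}=g$, i.e.\ $G=f'$, not $G'=g$, and the well-definedness check must then allow $G$ to change by any polynomial of degree at most $n-2$, which is fine by Lemma \ref{thm:PolDer}.)

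The genuine gap is the final step. Lemma \ref{lemmaPS} is only a formula for the quotient norm of a function \emph{already known} to lie in $L^1(\T)$; it is not a representation theorem for bounded functionals on $(\mathcal{P},\|\cdot\|_{H^\infty})$. The completion of the polynomials in the sup norm is the disc algebra, whose dual is the space of measures modulo $H^1_0$ (F.~and M.~Riesz), which is strictly larger than $L^1/H^1$: for instance $g\mapsto g(1)$ has norm $1$ but admits no $\eta\in L^1(\T)$ with $\int_\T g(z)\eta(z)\,dz=g(1)$ for all polynomials $g$ (Riemann--Lebesgue). So your duality argument only yields a representing \emph{measure} $\nu_{n,W}$ --- exactly what the paper gets in its Step 1 via Riesz--Markov and Hahn--Banach --- and the actual content of the theorem, namely that one may take $\eta_n\in L^1(\T)$ in \eqref{avr-func}, is not established. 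This is the paper's Step 2, which you have skipped: for $W\in S^1$ one integrates by parts in $t$ to lower the order, represents the two resulting functionals by measures $\mu_{n,1},\mu_{n,2}$, then integrates by parts on $\T$ to rewrite the pairing as integration of $f^{(n)}$ against the bounded functions $e^{\i\theta}\mapsto\mu_{n,j}(S_\theta)$ (the boundary terms cancel because $\phi_W$ annihilates $z^{n-1}$), so $\eta_n\in L^\infty(\T)$; for general $W\in S^n$ one approximates by $W_k\in S^1$ and uses that $[\eta_{n,k}]$ is Cauchy in $L^1/H^1$ by the norm bound applied to $W_k-W_m$. Without an argument of this kind proving absolute continuity of the anti-analytic part of the representing measure, your proof does not reach the stated conclusion.
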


\begin{proof} The proof is split into two steps.

{\it Step 1.} We show that there is a measure $\nu_{n,W}$ with $\|\nu_{n,W}\|\leq c_n\|V\|_n^{n-1}\|W\|_n$ such that
\begin{align*}
\frac{1}{(n-1)!}\int_0^1(1-t)^{n-1}\,\tr\left(\frac{d^{n-1}}{dt^{n-1}}f'(U_0+tV)W\right)\,dt
=\int_\T f^{(n)}(z)\,d\nu_{n,W}(z).
\end{align*}
Let $A(\T)$ denote the space $A(\D)\cap C(\overline\D)$ equipped with the norm induced from the space $C(\overline\D)$. Consider the quotient space
\[\mX_n=A(\T)/\{f\in A(\T): f^{(n)}=0\},\] where the completion is taken with respect to the norm $\|f^{(n)}\|_\infty$ coming from the seminorm $f\mapsto \|f^{(n)}\|_\infty$ on $A(\T)$. The space $(\mX_n,[f]\mapsto \|f^{(n)}\|_\infty)$ is a Banach space and it is isometrically isomorphic to $A(\T)$ via the $n$th power of the differentiation operator. Consider the linear functional
\begin{align}
\label{phiWdef}
\phi_W(f^{(n)}):=\frac{1}{(n-1)!}\int_0^1(1-t)^{n-1}\,\tr\left(\frac{d^{n-1}}{dt^{n-1}}f'(U_0+tV)W\right)\,dt
\end{align}
on $\mX_n$, which is well defined because the right hand side of \eqref{phiWdef} equals zero if $f^{(n)}=0$. This follows from Lemma \ref{thm:PolDer} since $f^{(n)}=0$ implies that the degree of the polynomial $f$ is less than $n$. From Theorem \ref{thm:MainEst} (i) and H\"{o}lder's inequality, we obtain
\begin{align}
\label{phi(f)}
|\phi_W(f^{(n)})|\leq c_n \|f^{(n)}\|_\infty\|V\|_n^{n-1}\|W\|_n,
\end{align} that is, $\phi_W$ is continuous on
$\mX_n\simeq A(\T)$, which can be considered as a closed subspace of $C(\T)$. Thus, by the Riesz-Markov and Hahn-Banach theorems, there is a finite complex-valued measure $\nu_{n,W}$ on $\T$ such that
\begin{align}
\nonumber
&\phi_W(f^{(n)})=\int_\T f^{(n)}(z)\,d\nu_{n,W}(z),\\
\label{phinorm}
&\|\phi_W\|_{\mX_n^*}=\|\nu_{n,W}\|\leq c_n\|V\|_n^{n-1}\|W\|_n.
\end{align}

{\it Step 2.}
We show that any measure $\nu_{n,W}$ satisfying \eqref{phinorm} has an absolutely continuous anti-analytic part, that is, there is $\eta_n\in L^1(\T)$ such that $\hat\nu_n(k)=\hat\eta_n(k)$, for $k\leq -1$.

Firstly, we assume that $W\in S^1$. Integration by parts gives
\begin{align*}
&\frac{1}{(n-1)!}\int_0^1(1-t)^{n-1}\,\tr\left(\frac{d^{n-1}}{dt^{n-1}}f'(U_0+tV)W\right)\,dt\\
&\quad=\frac{1}{(n-2)!}\int_0^1(1-t)^{n-2}\,\tr\left(\frac{d^{n-2}}{dt^{n-2}}f'(U_0+tV)W\right)\,dt\\
&\quad\quad-\frac{1}{(n-1)!}\,\tr\left(\frac{d^{n-2}}{dt^{n-2}}f'(U_0+tV)\big|_{t=0}W\right)\\
&\quad=:\phi_{W,1}(f^{(n-1)})-\phi_{W,2}(f^{(n-1)}).
\end{align*}
Consider the functionals $\phi_{W,1}$ and $\phi_{W,2}$ defined on $\mX_{n-1}\simeq A(\T)$. By repeating the reasoning of Step 1, we derive existence of finite complex-valued measures $\mu_{n,1}$ and $\mu_{n,2}$ such that
\begin{align*}
\phi_{W,1}(f^{(n-1)})=\int_\T f^{(n-1)}(z)\,d\mu_{n,1}(z)
\end{align*}
and
\begin{align*}
\phi_{W,2}(f^{(n-1)})=\int_\T f^{(n-1)}(z)\,d\mu_{n,2}(z).
\end{align*}
Observing that the function $e^{\i\theta}\mapsto M_{n,1}(e^{\i\theta})=\mu_{n,1}(S_\theta)$ is bounded and measurable on $S_\theta=\{z\in\T: 0\leq\arg z<\theta\}$ and using integration by parts, we obtain
\begin{align*}
\phi_{W,1}(f^{(n-1)})&=f^{(n-1)}(e^{\i\theta})M_{n,1}(e^{\i\theta})\bigg|_0^{2\pi}-
\int_0^{2\pi}f^{(n)}(e^{\i\theta})M_{n,1}(e^{\i\theta})\,d\theta\\
&=f^{(n-1)}(1)\mu_{n,1}(\T)+\i\int_\T f^{(n)}(z)M_{n,1}(z)\,\frac{dz}{z}.
\end{align*}
Similarly, integrating by parts gives
\begin{align*}
\phi_{W,2}(f^{(n-1)})&=f^{(n-1)}(1)\mu_{n,2}(\T)+\i\int_\T f^{(n)}(z)M_{n,2}(z)\,\frac{dz}{z},
&\quad M_{n,2}(e^{\i\theta})=\mu_{n,2}(S_\theta).
\end{align*}
Note that $\phi_W=\phi_{W,1}-\phi_{W,2}$ and apply this equality to $f(z)=\frac{1}{(n-1)!}z^{n-1}$ to derive
\[\mu_{n,1}(\T)-\mu_{n,2}(\T)=\phi_W(0)=0.\]
Therefore,
\[\phi_W(f^{(n)})=\int_\T f^{(n)}(z)\eta_n(z)\,dz,\] where
\[\eta_n(z)=\frac{i}{z}(M_{n,1}(z)-M_{n,2}(z))\in L^\infty(\T).\]
Employing the estimate \eqref{phi(f)} and Lemma \ref{lemmaPS} gives
\begin{align*}
\|[\eta_n]\|_{L^1/H^1}&=\sup_{\|f^{(n)}\|_\infty\leq 1,\; f\in\mathcal{P}}\left|\int_\T f^{(n)}(z)\eta_n(z)\,dz\right|=\sup_{\|f^{(n)}\|_\infty\leq 1,\; f\in\mathcal{P}}|\phi_W(f^{(n)})|\\
&\leq c_n\|V\|_n^{n-1}\|W\|_n.
\end{align*}
This completes the proof of Theorem \ref{aux} in case $W\in S^1$.

Now assume $W\in S^n$. Let $\{W_k\}_{k=1}^\infty\subseteq S^1$ be such that $\|W_k\|_n\leq\|W\|_n$ and
$\lim_{k\rightarrow\infty}\|W-W_k\|_n=0$. Let $\{\eta_{n,k}\}_{k=1}^\infty$ be the sequence of functions constructed above with respect to the triples $(U_0,V,W_k)$. We have
\begin{align*}
\|[\eta_{n,k}]-[\eta_{n,m}]\|_{L^1/H^1}&=\|\phi_{W_k}-\phi_{W_m}\|_{\mathcal{X}_n^*}
=\|\phi_{W_k-W_m}\|_{\mathcal{X}_n^*}\\
&\leq c_n\|V\|_n^{n-1}\|W_k-W_m\|_n
\end{align*}
and, hence, $\{\eta_{n,k}\}_{k=1}^\infty$ is Cauchy in $L^1(\T)/H^1(\T)$. Let $[\eta_n]$ denote the limit of this sequence in $L^1(\T)/H^1(\T)$, where $\eta_n\in L^1(\T)$. This $[\eta_n]$ satisfies the assertions of Theorem \ref{aux}.
\end{proof}

We conclude with the proof of the existence of the higher order spectral shift function on the unit circle.

\begin{proof}[Proof of Theorem \ref{existence}]
We invoke the integral representation for the remainder
\begin{equation}
    \label{SSFRemainderTemp}
    R_n (f, U_0, V) = \frac 1{(n-1)!}\, \int_0^1 (1 - t)^{n-1}
    \frac {d^n}{dt^{n}} f \left( U_t \right) \, dt,
\end{equation}
which follows from~\cite[Theorem~1.43 and~1.45]{Schwartz}.
Thus, by Lemma \ref{thm:TrDer}, \[f^{(n)}\mapsto \tr\big(R_n (f, U_0, V)\big)\] coincides with the functional \eqref{avr-func} in Theorem \ref{aux}, where $W=V$. Hence, Theorem \ref{aux} implies existence of $[\eta_n]\in L^1/H^1$ such that \[\|[\eta_n]\|_{L^1/H^1}\leq c_n\|V\|_n^n\]
and such that every representative in the class $[\eta_n]$ satisfies \eqref{hossf}. By the definition of $L^1/H^1$ norm, for every $\epsilon>0$, there is a function $\eta_n\in L^1(\T)$ such that \eqref{ssfest} holds.
\end{proof}

\bibliographystyle{plain}

\end{document}